\newtheorem{theorem}{Theorem}[section]
\newtheorem{lemma}[theorem]{Lemma}
\newtheorem{proposition}[theorem]{Proposition}
\theoremstyle{definition}
\newtheorem{definition}[theorem]{Definition}
\newtheorem{remark}[theorem]{Remark}
\newtheorem{assumption}{Assumption}
\newcommand{\ep}{\varepsilon}
\newcommand{\T}{\mathbb{T}}
\newcommand{\R}{\mathbb{R}}
\newcommand{\Z}{\mathbb{Z}}
\newcommand{\N}{\mathbb{N}}
\newcommand{\C}{\mathbb{C}}
\newcommand{\supp}{\operatorname{supp}}
\newcommand{\LR}[1]{{\langle {#1} \rangle }}
\begin{document}

\title[Periodic Zakharov-Kuznetsov equation] 
      {Loomis-Whitney-type inequalities and low regularity well-posedness of the periodic Zakharov-Kuznetsov  equation}

\author{Shinya Kinoshita \and Robert Schippa}
\address{Fakult\"at f\"ur Mathematik, Universit\"at Bielefeld, Postfach 10 01 31, 33501 Bielefeld, Germany}
\keywords{Zakharov-Kuznetsov equation, local well-posedness, short-time Fourier restriction norm method, nonlinear Loomis-Whitney inequality}
\subjclass[2010]{35Q53, 42B37}
\email{kinoshita@math.uni-bielefeld.de}
\email{robert.schippa@uni-bielefeld.de}

\begin{abstract}
Local well-posedness for the two-dimensional Zakharov-Kuznetsov equation in the fully periodic case with initial data in Sobolev spaces $H^s$, $s>1$, is proved. Frequency dependent time localization is utilized to control the derivative nonlinearity. The new ingredient to improve on previous results is a nonlinear Loomis-Whitney-type inequality.
\end{abstract}

\maketitle
\normalsize
\section{Introduction}
The purpose of this article is to improve local well-posedness of the Zak\-ha\-rov\-Kuz\-net\-sov equation with periodic boundary conditions in two dimensions
\begin{equation}
\label{eq:PeriodicZakharovKuznetsovEquation}
\left\{\begin{array}{cl}
\partial_t u + (\partial_{x_1}^3 + \partial_{x_1} \partial_{x_2}^2) u &= u \partial_{x_1} u, \quad (t,x) \in \R \times \mathbb{T}^2, \\
u(0) &= u_0 \in H_{\mathcal{R}}^s(\mathbb{T}^2), \end{array} \right.
\end{equation}
where $\mathbb{T} = \R / (2 \pi \Z)$, and $H^s_{\mathcal{R}}$ denotes the Sobolev space with regularity index $s$ comprised of real-valued functions.

By local well-posedness we mean that the data-to-solution mapping $S_T^\infty: H_{\mathcal{R}}^\infty \rightarrow C([0,T],H_{\mathcal{R}}^\infty)$ for $T=T(\Vert u_0 \Vert_{H_{\mathcal{R}}^s})$ assigning smooth, real-valued initial data to smooth, real-valued solutions admits an extension to a continuous mapping $S_T^s : H_{\mathcal{R}}^s \rightarrow C([0,T],H_{\mathcal{R}}^s)$.

The Zakharov-Kuznetsov equation in three dimensions was derived in \cite{ZakharovKuznetsov1974} to describe unidirectional ionic-sonic wave propagation in a magnetized plasma. Laedke and Spatschek derived also the two-dimensional model from the equations of motions for hydrodynamics in \cite{LaedkeSpatschek1982}, which was further justified in \cite{LannesLinaresSaut2013} by Lannes-Linares-Saut.

As a higher-dimensional analog of the Kor\-te\-weg-\-de Vries equation
\begin{equation*}
\partial_t u + \partial_x^3 u = u \partial_x u,
\end{equation*}
\eqref{eq:PeriodicZakharovKuznetsovEquation} has also been extensively studied, and the body of literature is huge. In the following we aim to deliver an overview of the well-posedness theory for \eqref{eq:PeriodicZakharovKuznetsovEquation} in two dimensions.

Conserved quantities for real-valued solutions are the mass 
\begin{equation*}
M(u) = \int u_0^2 dx
\end{equation*}
and energy
\begin{equation*}
E(u) = \int \frac{|\nabla u|^2}{2} + \frac{u^3}{3} dx.
\end{equation*}
In Euclidean space the Zakharov-Kuznetsov equation is invariant under the scaling
\begin{equation*}
u(t,x) \rightarrow \lambda^2 u(\lambda^3 t, \lambda x),
\end{equation*}
which distinguishes $s_c = -1$ as scaling critical regularity.

The classical energy method (cf. \cite{BonaSmith1975}) gives local well-posedness in $H^s_{\mathcal{R}}$, $s > 2$ as well on $\R^2$ as $\T^2$. In Euclidean space this was subsequently improved making use of dispersive effects. In \cite{Faminskii1995} global well-posedness was proved in $H^1_{\mathcal{R}}(\R^2)$. In this work smoothing and maximal function estimates were used to solve the Zakharov-Kuznetsov equation via the contraction mapping principle (cp. \cite{KenigPonceVega1993} for the earlier application in context of the Korteweg-de Vries equation). Linares and Pastor improved local well-posedness to $s>3/4$ in \cite{LinaresPastor2011} by refining the proof in \cite{Faminskii1995}.
In the works \cite{MolinetPilod2015} and \cite{GruenrockHerr2014} due to Molinet-Pilod and Gr\"unrock-Herr, bilinear Strichartz estimates were used to prove local well-posedness for $s>1/2$.

Recently, the first author proved local well-posedness in $H^s(\R^2)$ for $s>-1/4$ in \cite{Kinoshita2019}. The improvement stems from the use of the nonlinear Loomis-Whitney inequality to derive refined multilinear estimates for fully transverse interactions. The result from \cite{Kinoshita2019} is sharp up to endpoints in the sense that the data-to-solution mapping fails to be $C^2$ for $s<-1/4$. The literature for Loomis-Whitney inequalities is vast (see e.g. \cite{LoomisWhitney1949,Carbery2004,BennettCarberyWright2005,
BejenaruHerrTataru2010,BennettBez2010,BejenaruHerr2011,KochSteinerberger2015}); however, for many results on abstract Loomis-Whitney inequalities the application to nonlinear dispersive equations is not clear, as transversality or size of the involved hypersurfaces is not quantified precisely. The nonlinear Loomis-Whitney inequality in $\R^3$ with scalable assumptions on the hypersurfaces was investigated in \cite{BejenaruHerrTataru2010}; see also \cite{BejenaruHerrHolmerTataru2009} for an application to the Zakharov system and \cite{BejenaruHerr2011,BennettBez2010} for subsequent higher-dimensional progress. A strengthened form of the nonlinear Loomis-Whitney inequality is given by multilinear restriction inequalities; see \cite{BennettCarberyTao2006} and the references therein. In \cite{BennettCarberyTao2006}, the dependence on the transversality was not quantified. This was only recently accomplished in three dimensions in \cite{Ramos2018}.

Due to decreased dispersion, the periodic case is worse behaved: in the work \cite{LinaresPantheeRobertTzvetkov2019} by Linares et al. was shown that \eqref{eq:PeriodicZakharovKuznetsovEquation} is not amenable to Picard iteration for $s>1/2$, provided that \eqref{eq:PeriodicZakharovKuznetsovEquation} is locally well-posed at all for $s>1/2$. In fact, local well-posedness was proved for $s>5/3$ by short-time linear Strichartz estimates in \cite{LinaresPantheeRobertTzvetkov2019}.

This was modestly improved by the second author to $s>3/2$ via short-time bilinear Strichartz estimates adapting the bilinear arguments from \cite{GruenrockHerr2014,MolinetPilod2015} to the periodic case in \cite{Schippa2019HDBO}.
Thus, the natural question is to what extend the refined approach from \cite{Kinoshita2019} leads to improved local well-posedness on $\T^2$. We prove the following theorem:
\begin{theorem}
\label{thm:LocalWellposednessZK}
Let $s>1$. Then, we find \eqref{eq:PeriodicZakharovKuznetsovEquation} to be locally well-posed.
\end{theorem}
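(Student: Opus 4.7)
The plan is to implement a short-time $X^{s,b}$ framework of Ionescu--Kenig--Tataru type adapted to the derivative nonlinearity. I dyadically decompose $u = \sum_{N \geq 1} P_N u$, and for each spatial frequency $N$ I measure $P_N u$ in an $X^{s,b}$-type norm localized to time intervals of length $T_N = N^{-1}$ (the scale that compensates the single derivative in $u\partial_{x_1}u$). Define $\|u\|_{F^s}^2 = \sum_N N^{2s} \|P_N u\|_{F_N}^2$, and in parallel a nonlinearity space $N^s$ and the energy space $E^s = L^\infty_T H^s_{\mathcal R}$. A standard short-time linear estimate will give
\[
\|u\|_{F^s([0,T])} \lesssim \|u\|_{E^s([0,T])} + \|\partial_t u + (\partial_{x_1}^3 + \partial_{x_1}\partial_{x_2}^2)u\|_{N^s([0,T])},
\]
reducing Theorem~\ref{thm:LocalWellposednessZK} to (i) the trilinear estimate $\|\partial_{x_1}(uv)\|_{N^s} \lesssim \|u\|_{F^s}\|v\|_{F^s}$ and (ii) an energy inequality $\|u\|_{E^s([0,T])}^2 \lesssim \|u_0\|_{H^s}^2 + T^\theta (\|u\|_{F^s} + \|u\|_{E^s})^3$ for some $\theta > 0$.

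\textbf{Trilinear estimate via a periodic nonlinear Loomis--Whitney inequality.} After Littlewood--Paley and modulation decomposition of the three factors, the delicate case is a high-high-to-high interaction in which all modulations are smaller than the resonance function
\[
\Omega(\xi,\eta) = \phi(\xi+\eta) - \phi(\xi) - \phi(\eta), \qquad \phi(\xi_1,\xi_2) = \xi_1^3 + \xi_1 \xi_2^2.
\]
In this regime the estimate reduces to an $L^2$ convolution bound for three densities supported in thin slabs around the characteristic surfaces $\{\tau = \pm \phi(\xi)\}$ in $\Z^2 \times \R$, further localized to frequency cubes of size $N$. I want a periodic analogue of the nonlinear Loomis--Whitney inequality of Bejenaru--Herr--Tataru used by Kinoshita in the Euclidean setting \cite{BejenaruHerrTataru2010,Kinoshita2019}, where the gain over trivial Bernstein bounds is governed by the transversality of the three tangent planes, quantified by $|\Omega|$. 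I would establish this periodic analogue by partitioning each characteristic patch into almost-flat caps compatible with the lattice, invoking a discrete transversal intersection count for two rigid translates of the ZK characteristic set, and summing losses explicitly in $N$ and $|\Omega|$. The resulting gain over the bilinear short-time Strichartz estimates of \cite{Schippa2019HDBO} is what permits the threshold to drop from $s > 3/2$ to $s > 1$.

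\textbf{Energy estimate, iteration, and main obstacle.} For the energy estimate, commuting $P_N$ through the equation yields $\tfrac{d}{dt}\|P_N u\|_{L^2}^2 = 2\int P_N u \cdot P_N(u\partial_{x_1}u)\,dx$; the ``diagonal'' piece vanishes because $\int (P_N u)^2 \partial_{x_1}(P_N u)\,dx = 0$ on $\T^2$, and the off-diagonal commutator remainder is controlled by a symmetric variant of the same trilinear estimate. Combining the linear, trilinear, and energy estimates closes an a priori bound on $\|u\|_{E^s} + \|u\|_{F^s}$ on an interval $T = T(\|u_0\|_{H^s})$, and local well-posedness then follows by regularizing the data, extracting a limit of smooth solutions, and upgrading to continuous dependence by running the same estimates on the difference of two solutions in $H^{s-1}$. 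The principal obstacle I anticipate is the proof of the periodic nonlinear Loomis--Whitney inequality itself: the Euclidean argument of \cite{Kinoshita2019} rests on continuous transversality of level sets, which is unavailable on $\T^2$. Bounding the lattice-point count on near-intersections of two ZK characteristic surfaces with the correct scaling in $N$ and $|\Omega|$, and without logarithmic losses that would push the final threshold past $s = 1$, is the principal technical hurdle and the reason the periodic result must stop well above the Euclidean threshold $s > -1/4$.
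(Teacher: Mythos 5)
Your overall framework is right: short-time $X^{s,b}$ spaces with frequency-dependent time localization $T(N)=N^{-1}$, a linear estimate, a bilinear estimate for the nonlinearity, and an energy estimate, closed by bootstrap and continuous dependence; that matches the paper's skeleton, which itself follows \cite{IonescuKenigTataru2008}. However, there is a substantive misattribution of where the new ingredient enters, and this is not a cosmetic point.

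You place the periodic nonlinear Loomis--Whitney inequality inside the bilinear short-time estimate $\|\partial_{x_1}(uv)\|_{N^s}\lesssim\|u\|_{F^s}\|v\|_{F^s}$. In the paper this estimate is Proposition~\ref{prop:ShorttimeNonlinearEstimates}, proved already in \cite{Schippa2019HDBO} for $s>1/2$ via short-time bilinear Strichartz estimates; it requires no Loomis--Whitney refinement and does not set the final threshold. The Loomis--Whitney machinery enters exclusively in the \emph{energy estimate}: after integrating by parts to push the $\partial_{x_1}$ onto the lowest frequency and applying the mean value theorem to the commutator symbol, one is left with genuinely trilinear expressions of the form in Proposition~\ref{prop8.1}, and it is there (via Propositions~\ref{nlw-ZK}/\ref{nlw-ZKSymmetrized} and the Whitney/angular decompositions imported from \cite{Kinoshita2019}) that the transversality gain is exploited. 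Your description, in which the energy commutator is ``controlled by a symmetric variant of the same trilinear estimate'' after already establishing the nonlinear estimate with Loomis--Whitney, inverts the logical order and obscures why the threshold is $s>1$ rather than $s>1/2$: the limiting interaction is in the energy functional, not in the Duhamel term.

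A second missing ingredient is the diophantine/lattice-counting input. The paper's Loomis--Whitney-type estimate on $\R\times\Z^2/N$ (Proposition~\ref{prop:PeriodicNLW}) and its application rest on a symmetrizing linear change of variables (so that $\varphi(\xi,\eta)=\xi^3+\xi\eta^2$ becomes $\tilde\psi(\ell)=\ell_1^3+\ell_2^3$) which moves the lattice off the integer grid. Counting lattice points in thin rectangles tilted along $\eta=\sqrt3\,\xi$ is then controlled by Liouville's theorem on diophantine approximation for $\sqrt3$ (Theorem~\ref{thm:LiouvilleTheorem}, Lemma~\ref{liouville}); this is why the result is sensitive to the ratio of the two periods and fails on tori such as $\sqrt3\lambda\T\times\lambda\T$. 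Your proposal mentions a ``discrete transversal intersection count'' but does not flag the arithmetic obstruction; without it the near-parallel interactions (your Case~(Ib)-type regime) cannot be bounded. Finally, a small point: you also claim logarithmic losses must be avoided to reach $s>1$; in fact the paper tolerates $N_3^{\ep}$ (hence $\log$) losses in Proposition~\ref{prop8.1}, which is consistent with the open threshold $s>1$. And for continuous dependence the paper runs differences in $L^2$ plus Bona--Smith, not in $H^{s-1}$; either could be made to work, but the $L^2$ route is what the paper actually carries out.
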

\begin{remark}
Our result is sensitive with respect to the periods. The proof does not extend to the torus $\sqrt{3} \lambda \T \times \lambda \T$ with $\lambda >0$, but to all ratinal tori. 
We refer to Section \ref{section:ShorttimeEnergyEstimates} for further details. In Section \ref{section:ShorttimeEnergyEstimates} we shall also see that $s =1$ is the limit of our method of frequency dependent time localization and transversality considerations.

Furthermore, the local well-posedness results on $\R^2$ proved via the contraction mapping principle hold for complex initial data. In Section \ref{section:NormInflation} we prove norm inflation for complex initial data on $\T^2$. In the following we consider local well-posedness for \eqref{eq:PeriodicZakharovKuznetsovEquation} implicitly only for real-valued initial data.
\end{remark}
Short-time analysis was used in the periodic case in \cite{LinaresPantheeRobertTzvetkov2019} and \cite{Schippa2019HDBO} as it was pointed out in \cite{LinaresPantheeRobertTzvetkov2019} that \eqref{eq:PeriodicZakharovKuznetsovEquation} is not amenable to Picard iteration. The function spaces used in the present work were introduced for the Euclidean space in \cite{IonescuKenigTataru2008}. The construction in the periodic case will be revisited in Section \ref{section:Notation}. By now there are many works related with frequency dependent time localization. We refer to the expositions in \cite{LinaresPantheeRobertTzvetkov2019,Schippa2019HDBO,Schippa2019PhDThesis} and the references therein for a more complete depiction.
To deal with large initial data, we rescale the torus to handle small initial data on large tori. Thus, we will also consider estimates on tori with arbitrary periods. In the context of short-time analysis this was previously done in \cite{Molinet2012}; see also Section \ref{section:Notation}.

For the proof of Theorem \ref{thm:LocalWellposednessZK} we will show the following sets of estimates. Let $\lambda \geq 1$ denote the period length and $1<s \leq s^\prime$ the regularity and $T \in (0,1]$. Firstly, for smooth solutions $u \in C([0,T],H_\lambda^s)$ emanating from $\lambda$-periodic smooth initial data $u_0 \in H_\lambda^{\infty}$ we find the following estimates to hold:
\begin{equation}
\label{eq:PropagationSolutions}
\left\{ \begin{array}{cl}
\Vert u \Vert_{F^{s^\prime}_{\lambda}(T)} &\lesssim \Vert u \Vert_{E^{s^\prime}_{\lambda}(T)} + \Vert u \partial_{x_1} u \Vert_{N^{s^\prime}_{\lambda}(T)}, \\
\Vert u \partial_{x_1} u \Vert_{N^{s^\prime}_{\lambda}(T)} &\lesssim \Vert u \Vert_{F^{s^\prime}_{\lambda}(T)} \Vert u \Vert_{F^{s}_{\lambda}(T)}, \\
\Vert u \Vert^2_{E^{s^\prime}_{\lambda}(T)} &\lesssim \Vert u_0 \Vert_{H_{\lambda}^{s^\prime}}^2 + \Vert u \Vert_{F^{s}_{\lambda}(T)} \Vert u \Vert^2_{F^{s^\prime}_{\lambda}(T)}.
\end{array} \right.
\end{equation}
By standard bootstrap arguments this proves a priori estimates and persistence of regularity on $[0,T]$ for small initial data in $H_\lambda^s$.

For differences of solutions $v = u_1 - u_2$, with smooth initial data $u_i(0) \in H_\lambda^{\infty}$ and $1<s$, we show
\begin{equation}
\label{eq:PropagationDifferencesL2}
\left\{ \begin{array}{cl}
\Vert v \Vert_{F_{\lambda}^{0}(T)} &\lesssim \Vert v \Vert_{E_{\lambda}^{0}(T)} + \Vert \partial_{x_1} ( v (u_1+u_2)) \Vert_{N_{\lambda}^{0}(T)}, \\
\Vert \partial_{x_1} (v(u_1+u_2)) \Vert_{N_{\lambda}^{0}(T)} &\lesssim \Vert v \Vert_{F_{\lambda}^{0}(T)} \Vert u_1 + u_2 \Vert_{F_{\lambda}^{s}(T)}, \\
\Vert v \Vert^2_{E_{\lambda}^{0}(T)} &\lesssim \Vert v(0) \Vert_{L_{\lambda}^2}^2 + \Vert v \Vert^2_{F_{\lambda}^{0}(T)} ( \Vert u_1 \Vert_{F_{\lambda}^{s}(T)} + \Vert u_2 \Vert_{F_{\lambda}^{s}(T)}).
\end{array} \right.
\end{equation}
This proves Lipschitz-continuous dependence in $L_\lambda^2$ for small initial data in $H^s_\lambda$.

By virtue of the following set of estimates,
\begin{equation}
\label{eq:PropagationDifferencesHs}
\left\{ \begin{array}{cl}
\Vert v \Vert_{F^{s^\prime}_{\lambda}(T)} &\lesssim \Vert v \Vert_{E^{s^\prime}_{\lambda}(T)} + \Vert \partial_{x_1} (v(u_1+u_2)) \Vert_{N^{s^\prime}_{\lambda}(T)} \\
\Vert \partial_{x_1} (v(u_1+u_2)) \Vert_{N^{s^\prime}_{\lambda}(T)} &\lesssim \Vert v \Vert_{F^{s^\prime}_{\lambda}(T)} (\Vert u_1 \Vert_{F^{s}_{\lambda}(T)} + \Vert u_2 \Vert_{F_{\lambda}^{s}(T)}) \\
\Vert v \Vert^2_{E^{s^\prime}_{\lambda}(T)} &\lesssim \Vert v(0) \Vert_{H_{\lambda}^{s^\prime}}^2 + (\Vert v \Vert_{F_{\lambda}^{0}(T)} \Vert v \Vert_{F^{s^\prime}_{\lambda}(T)} \Vert u_2 \Vert_{F^{2s^\prime}_{\lambda}(T)} \\
&\quad \quad + \Vert v \Vert^2_{F^{s^\prime}_{\lambda}(T)} \Vert u_2 \Vert_{F^{s}_{\lambda}(T)} + \Vert v \Vert^2_{F^{s^\prime}_{\lambda}(T)} \Vert v \Vert_{F^{s}_{\lambda}(T)})
\end{array} \right.
\end{equation}
continuous dependence for small initial data in $H^s_\lambda$ follows via the classical Bona-Smith approximation (cf. \cite{BonaSmith1975}). The reduction from arbitrary initial data in $H^s(\T^2)$ to initial data with small Sobolev norm on $\lambda \T^2$ is carried out via scaling. For previous applications of scaling in the context of frequency dependent time localization applied to periodic solutions; see e.g. \cite{Molinet2012,Schippa2019PhDThesis}.

The linear estimate, propagating $u$, $v$, respectively, in $F_{\lambda}^{s}(T)$ is recalled in Section \ref{section:Notation}. The short-time nonlinear estimate propagating the nonlinearity in $N_\lambda^s(T)$ was carried out in \cite{Schippa2019HDBO} and is recalled in Section \ref{section:ShorttimeBilinearEstimates}. 
The first part of Section \ref{section:NLW} is devoted to the global nonlinear Loomis-Whitney inequality on $\R^3$. After that 
Loomis-Whitney-type inequalities on $\R \times$lattices which play a crucial role in the proof of energy estimates are discussed. 
For the energy estimate in Section \ref{section:ShorttimeEnergyEstimates}, the analysis from \cite{Schippa2019HDBO} is refined with the aid of the transversality considerations from \cite{Kinoshita2019}. In Section \ref{section:NormInflation}, we prove norm inflation for periodic complex initial data with arbitrary Sobolev regularity, which is not the case in $\R^2$.

With the above sets of estimates at disposal, the proof of Theorem \ref{thm:LocalWellposednessZK} is concluded by standard bootstrap arguments, which are omitted. For details, we refer to \cite{Schippa2019HDBO}.

\section{Notation}
\label{section:Notation}
Dyadic numbers will be denoted by capital letters $N \in 2^{\mathbb{N}_0}$, where $\mathbb{N}_0 = \mathbb{N} \cup \{ 0 \}$. For $\xi \in \R^n$ let $|\xi| = \sqrt{\xi_1^2+\ldots+\xi_n^2}$ denote the Euclidean norm and $\langle \xi \rangle ^2 = 1+|\xi|^2$. Set $\T = \R/(2\pi \Z)$ and for $\lambda \geq 1$ set $\lambda \T^n = \lambda \T \times \ldots \times \lambda \T$ and $\Z^n / \lambda = \Z / \lambda \times \ldots \times \Z / \lambda$. Varying $\lambda$ we have to be aware of possible dependencies of constants on the spatial scale.
Let $(d\xi)_\lambda$ be the normalized counting measure on $\Z^n/\lambda$:
\begin{equation*}
\int a(\xi) (d\xi)_\lambda := \lambda^{-n} \sum_{\xi \in \Z^n/\lambda} a(\xi).
\end{equation*}
The Fourier transform on $\lambda \T^n$ is defined for $f \in L^1(\lambda \T^n; \C)$ by
\begin{equation*}
\hat{f}(k) = \int_{\lambda \T^n} e^{-i k.x} f(x) dx, \quad k \in \mathbb{Z}^n/\lambda.
\end{equation*}
The inverse Fourier transform is given by
\begin{equation*}
\check{g}(x) = \frac{1}{(2 \pi)^n} \int g(\xi) e^{i x.\xi} (d\xi)_\lambda.
\end{equation*}
The usual properties like Plancherel's theorem or Parseval's identity of the Fourier transform hold. We refer to \cite[p.~727]{CollianderKeelStaffilaniTakaokaTao2003} for further properties.

Let $\chi: \R \rightarrow \R_{\geq 0}$ denote a smooth symmetric function, supported in $[-7/8,7/8]$ with $\chi \equiv 1$ on $[-5/4,5/4]$ and set $\chi_k(\xi) = \chi(2^{-k} |\xi|) - \chi(2^{1-k} |\xi|)$ for $k \in \mathbb{N}$.
Note that
\begin{equation*}
\sum_{k=1}^\infty \chi_k(\xi) + \chi(\xi) \equiv 1.
\end{equation*}

For $N = 2^n$, $n \in \mathbb{N}_0$ we denote by $P_N$ the Littlewood-Paley projector associated with $\chi_n$, i.e., 
\begin{equation*}
(P_N f) \widehat (\xi) = \chi_n(|\xi|) \hat{f}(\xi).
\end{equation*}.
We define Sobolev spaces for $s \geq 0$ as 
\begin{equation*}
H^s(\lambda \T^n) = \{ f \in L^2(\lambda \T^n) \; | \; \Vert f \Vert_{H^s_\lambda}^2 = \int \langle \xi \rangle^{2s} |\hat{f}(\xi)|^2 (d\xi)_\lambda < \infty \}
\end{equation*}
and $H^\infty(\lambda \T^n) = \bigcap_{s \geq 0} H^s(\lambda \T^n)$.

We turn to the definition of the short-time $X^{s,b}$-spaces. Let $\eta_0: \R \rightarrow [0,1]$ denote an even, smooth function with $\eta_0 \equiv 1$ on $[-5/4,5/4]$ on $\text{supp} \, (\eta_0) \subseteq [-7/8,7/8]$. For $k \in \mathbb{N}$ we set
\begin{equation*}
\eta_k(\tau) = \eta_0(\tau/2^k) - \eta_0(\tau/2^{k-1}).
\end{equation*}
We write $\eta_{\leq m} = \sum_{j=0}^m \eta_j$ for $m \in \mathbb{N}$.

Set 
\begin{equation*}
A_k = 
\begin{cases}
\{ \xi \in \R^n \; | \; |\xi| \lesssim 1 \}, \quad k=0, \\
\{ \xi \in \R^n \; | \; |\xi| \sim 2^k \}, \quad k \geq 1,
\end{cases}
\end{equation*}
and for the dispersion relation $\varphi(\xi,\eta) = \xi^3 + \xi \eta^2$, $N,L \in 2^{\mathbb{N}_0}$ 
\begin{equation}
\label{eq:NotationFrequencyModulationLocalization}
\begin{split}
G_{N,L} &= \{ (\tau,\xi) \in \mathbb{R} \times \R^2 \, | \, |\xi| \sim N, \,   |\tau - \varphi(\xi)| \sim L \}, \\
G_{N,\leq L} &= \{ (\tau,\xi) \in \mathbb{R} \times \R^2 \, | \, |\xi| \sim N, \, |\tau - \varphi(\xi)| \leq L \}.
\end{split}
\end{equation}

Next, we define an $X^{s,b}$-type space for the Fourier transform of frequency-localized space-periodic functions:
\begin{equation*}
\begin{split}
\label{eq:XkDefinition}
&X_{k,\lambda} = \{ f:  \mathbb{R} \times \mathbb{Z}^n/\lambda \rightarrow \mathbb{C} \; | \\
 &\mathrm{supp}(f) \subseteq \R \times {A_k}, \Vert f \Vert_{X_{k,\lambda}} = \sum_{j=0}^\infty 2^{j/2} \Vert \eta_j(\tau - \varphi(\xi)) f(\tau,\xi) \Vert_{L^2_{(d\xi)_\lambda} L^2_{\tau}} < \infty \} .
\end{split}
\end{equation*}

Partitioning the modulation variable through a sum over $\eta_j$ yields the estimate
\begin{equation}
\label{eq:XkEstimateI}
\Vert \int_{\mathbb{R}} | f_k(\tau^\prime,\xi) | d\tau^\prime \Vert_{L^2_{(d\xi)_\lambda}} \lesssim \Vert f_k \Vert_{X_{k,\lambda}}. 
\end{equation}

Also, we record the estimate
\begin{equation}
\label{eq:XkEstimateII}
\begin{split}
&\quad \sum_{j=l+1}^{\infty} 2^{j/2} \Vert \eta_j(\tau - \varphi(\xi)) \cdot \int_{\mathbb{R}} | f_k(\tau^\prime,\xi) | \cdot 2^{-l} (1+ 2^{-l}|\tau - \tau^\prime |)^{-4} d\tau^\prime \Vert_{L^2_{(d\xi)_\lambda} L^2_\tau} \\
&\quad + 2^{l/2} \Vert \eta_{\leq l} (\tau - \varphi(\xi)) \cdot \int_{\mathbb{R}} | f_k(\tau^\prime,\xi) | \cdot 2^{-l} (1+ 2^{-l}|\tau - \tau^\prime |)^{-4} d\tau^\prime \Vert_{L^2_{(d\xi)_\lambda} L^2_\tau }\\
&\lesssim \Vert f_k \Vert_{X_{k,\lambda}},
\end{split}
\end{equation}
which is a rescaled version of \cite[Equation~(3.5)]{GuoOh2018}.

In particular, we find for a Schwartz-function $\gamma$ for $k, l \in \mathbb{N}, t_0 \in \mathbb{R}, f_k \in X_{k,\lambda}$ the estimate
\begin{equation}
\label{eq:XkEstimateIII}
\Vert \mathcal{F}[\gamma(2^l(t-t_0)) \cdot \mathcal{F}^{-1}(f_k)] \Vert_{X_{k,\lambda}} \lesssim_{\gamma} \Vert f_k \Vert_{X_{k,\lambda}}.
\end{equation}

We define the spaces
\begin{equation*}
E_{k,\lambda} = \{ u_0 : \lambda \mathbb{T}^n \rightarrow \mathbb{C} \, | \, \text{supp} (\hat{u}_0) \subseteq A_k, \; \Vert u_0 \Vert_{E_{k,\lambda}} = \Vert u_0 \Vert_{L_\lambda^2} < \infty \},
\end{equation*}
which are the spaces for the dyadically localized energy.\\
Next, we set
\begin{equation*}
C_0 (\mathbb{R}, E_{k,\lambda}) = \{ u_k \in C(\mathbb{R},E_{k,\lambda}) \; | \; \mathrm{supp}(u_k) \subseteq [-4,4] \}
\end{equation*}
and define for a frequency $2^k$ the following short-time $X^{s,b}$-space:
\begin{equation*}
F_{k,\lambda} = \{ u_k \in C_0(\mathbb{R}, E_{k,\lambda}) \; | \Vert u_k \Vert_{F_{k,\lambda}} = \sup_{t_k \in \mathbb{R}} \Vert \mathcal{F}[u_k \eta_0(2^{ k}(t-t_k))] \Vert_{X_{k,\lambda}} < \infty \}.
\end{equation*}
The frequency dependent time localization for frequencies $N \in 2^{\mathbb{N}_0}$ is $T(N) = N^{-1}$. This allows us to overcome the derivative loss in the nonlinear estimate (cf. \cite{Schippa2019HDBO}).
Similarly, we define the spaces to capture the nonlinearity:
\begin{equation*}
\begin{split}
&N_{k,\lambda} = \{ u_k \in C_0(\mathbb{R}, E_{k,\lambda}) \; | \\ 
&\Vert u_k \Vert_{N_{k,\lambda}} = \sup_{t_k \in \mathbb{R}} \Vert (\tau - \varphi(\xi) + i2^{ k})^{-1} \mathcal{F}[u_k \eta_0(2^{ k}(t-t_k))] \Vert_{X_{k,\lambda}} < \infty \}.
\end{split}
\end{equation*}

We localize the spaces in time in the usual way. For $T \in (0,1]$ we set
\begin{equation*}
F_{k,\lambda}(T) = \{ u_k \in C([-T,T], E_{k,\lambda}) \; | \Vert u_k \Vert_{F_{k,\lambda}(T)} = \inf_{\tilde{u}_k = u_k \mathrm{in} [-T,T] } \Vert \tilde{u}_k \Vert_{F_{k,\lambda}} < \infty \}
\end{equation*}
and
\begin{equation*}
N_{k,\lambda}(T) = \{ u_k \in C([-T,T], E_{k,\lambda}) \; | \Vert u_k \Vert_{N_{k,\lambda}(T)} = \inf_{\tilde{u}_k = u_k \mathrm{in} [-T,T]} \Vert \tilde{u}_k \Vert_{N_{k,\lambda}} < \infty \}.
\end{equation*}

We assemble the spaces for dyadically localized frequencies in a straight-forward manner using Littlewood-Paley theory: as an energy space for solutions we consider
\begin{equation*}
\begin{split}
E_\lambda^{s}(T) &= \{ u \in C([-T,T],H_\lambda^{\infty}) \; | \; \\
\Vert u \Vert_{E_\lambda^{s}(T)}^2 &= \Vert P_{\leq 0} u(0) \Vert_{L_\lambda^2}^2 + \sum_{N \in 2^{\mathbb{N}}} \sup_{t_k \in [-T,T]} N^{2s} \Vert P_N u(t_k) \Vert_{L_\lambda^2}^2 < \infty \}.
\end{split}
\end{equation*}
We define the short-time $X^{s,b}$-space for the solution
\begin{equation*}
F_\lambda^{s}(T) = \{ u \in C([-T,T],H_\lambda^{\infty}) \; | \Vert u \Vert_{F_\lambda^{s}(T)}^2 = \sum_{N = 2^n, n \in \mathbb{N}_0} N^{2s} \Vert P_N u \Vert_{F_{n,\lambda}(T)}^2 < \infty \} ,
\end{equation*}
and for the nonlinearity we consider
\begin{equation*}
N^{s}_\lambda(T) = \{ u \in C([-T,T],H_\lambda^{\infty}) \; |  \Vert u \Vert_{N_\lambda^{s}(T)}^2 = \sum_{N = 2^n, n \in \mathbb{N}_0} N^{2s} \Vert P_n u \Vert_{N_{n,\lambda}(T)}^2 < \infty \}.
\end{equation*}

We also make use of $k$-acceptable time multiplication factors (cf. \cite{IonescuKenigTataru2008}): for $k \in \mathbb{N}_0$ we set
\begin{equation*}
S_k = \{ m_k \in C^{\infty}(\mathbb{R},\mathbb{R}) : \; \Vert m_k \Vert_{S_k} = \sum_{j=0}^{10} 2^{-j k} \Vert \partial^j m_k \Vert_{L^{\infty}} < \infty \}.
\end{equation*}
The generic example is given by time localization on a scale of $2^{-k}$, i.e., $\eta_0(2^{ k} \cdot)$.\\ 
The estimates (cf. \cite[Eq.~(2.21),~p.~273]{IonescuKenigTataru2008})
\begin{equation}
\label{eq:timeLocalizationShorttimeNorms}
\begin{split}
\left\{\begin{array}{cl}
\Vert \sum_{k \geq 0} m_k(t) P_{2^k}(u) \Vert_{F_\lambda^{s}(T)} \lesssim (\sup_{k \geq 0} \Vert m_k \Vert_{S_k}) \cdot \Vert u \Vert_{F_\lambda^{s}(T)}, \\
\Vert \sum_{k \geq 0} m_k(t) P_{2^k}(u) \Vert_{N_\lambda^{s}(T)} \lesssim (\sup_{k \geq 0} \Vert m_k \Vert_{S_k}) \cdot \Vert u \Vert_{N_\lambda^{s}(T)}, \\
\Vert \sum_{k \geq 0} m_k(t) P_{2^k}(u) \Vert_{E_\lambda^{s}(T)} \lesssim (\sup_{k \geq 0} \Vert m_k \Vert_{S_k}) \cdot \Vert u \Vert_{E_\lambda^{s}(T)}
\end{array} \right.
\end{split}
\end{equation}
follow from integration by parts.\\
From \eqref{eq:timeLocalizationShorttimeNorms} follows that we can assume $F_{k,\lambda}(T)$ functions to be supported in time on an interval $[-T-2^{-k-10},T+2^{-k-10}]$.

We record basic properties of the short-time $X^{s,b}$-spaces introduced above. The next lemma establishes the embedding $F_\lambda^{s}(T) \hookrightarrow C([0,T],H_\lambda^s)$.
\begin{lemma}
\label{lem:FsEmbedding}
\begin{enumerate}
\item[(i)]
We find the estimate
\begin{equation*}
\Vert u \Vert_{L_t^\infty L_\lambda^2} \lesssim \Vert u \Vert_{F_{k,\lambda}}
\end{equation*}
to hold for any $u \in F_{k,\lambda}$ with implicit constant independent of $k$ and $\lambda$.
\item[(ii)]
Suppose that $s \in \R$, $T>0$ and $u \in F_\lambda^{s}(T)$. Then, we find the estimate
\begin{equation*}
\Vert u \Vert_{C([0,T],H_\lambda^s)} \lesssim \Vert u \Vert_{F_\lambda^{s}(T)}
\end{equation*}
to hold.
\end{enumerate}
\end{lemma}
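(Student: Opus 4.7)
The plan for part (i) is a direct Fourier-inversion argument. Fix $t_0 \in \mathbb{R}$ and set $v(t,x) := u(t,x)\,\eta_0(2^{k}(t-t_0))$. Since $\eta_0 \equiv 1$ on $[-5/4,5/4]$ we have $\eta_0(0) = 1$, hence $v(t_0,\cdot) = u(t_0,\cdot)$ as functions on $\lambda \mathbb{T}^n$. Fourier inversion in time then gives
$$\mathcal{F}_x[u(t_0)](\xi) \;=\; \frac{1}{2\pi}\int_{\mathbb{R}} \mathcal{F}[v](\tau,\xi)\, e^{i t_0 \tau}\, d\tau,$$
so $|\mathcal{F}_x[u(t_0)](\xi)| \leq \frac{1}{2\pi}\int|\mathcal{F}[v](\tau,\xi)|\,d\tau$. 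Parseval on $\lambda \mathbb{T}^n$ combined with \eqref{eq:XkEstimateI} yields
$$\|u(t_0)\|_{L^2_\lambda}^2 \;\lesssim\; \Bigl\|\int |\mathcal{F}[v](\tau,\xi)|\,d\tau\Bigr\|_{L^2_{(d\xi)_\lambda}}^2 \;\lesssim\; \|\mathcal{F}[v]\|_{X_{k,\lambda}}^2 \;\leq\; \|u\|_{F_{k,\lambda}}^2,$$
the last inequality holding by the definition of the $F_{k,\lambda}$-norm applied with $t_k = t_0$. Taking the supremum over $t_0 \in \mathbb{R}$ finishes (i).

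For part (ii), the continuity $u \in C([0,T],H^s_\lambda)$ is essentially built into the definition of $F^s_\lambda(T)$, whose elements lie a priori in $C([-T,T], H^\infty_\lambda)$; only the norm bound requires proof. For fixed $t \in [0,T]$, any admissible extension $\tilde{u}$ of $u$, and any dyadic $N = 2^n$, part (i) applied to $P_N \tilde{u} \in F_{n,\lambda}$ yields $\|P_N u(t)\|_{L^2_\lambda} \lesssim \|P_N \tilde{u}\|_{F_{n,\lambda}}$. Taking the infimum over admissible extensions one dyadic block at a time, multiplying by $N^{2s}$, and summing in $N$ produces
$$\|u(t)\|_{H^s_\lambda}^2 \;=\; \sum_{N} N^{2s}\|P_N u(t)\|_{L^2_\lambda}^2 \;\lesssim\; \sum_{N} N^{2s}\|P_N u\|_{F_{n,\lambda}(T)}^2 \;=\; \|u\|_{F^s_\lambda(T)}^2,$$
uniformly in $t \in [0,T]$, which is the claimed bound.

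The only substantive point is bookkeeping: checking that the implicit Parseval constants are independent of $\lambda$ under the normalization $(d\xi)_\lambda$, and ensuring that for $T \in (0,1]$ every $F_{k,\lambda}(T)$-function admits an extension living in $F_{k,\lambda}$, whose temporal support is contained in $[-4,4]$, so that part (i) is directly applicable block-by-block. Neither issue presents a real obstacle; both assertions reduce to Fourier inversion in time together with \eqref{eq:XkEstimateI} and dyadic summation.
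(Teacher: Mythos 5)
Your proof is correct and is essentially the standard argument from the cited sources (Ionescu--Kenig--Tataru, Guo--Oh): in the paper this lemma is only proved by reference, so there is no competing in-text argument. The key steps you use — inserting the time cutoff $\eta_0(2^k(t-t_0))$, Fourier inversion in $\tau$ at $t=t_0$, Plancherel in $x$, and then \eqref{eq:XkEstimateI} to pass to the $X_{k,\lambda}$-norm — are exactly what those references do, and your block-by-block reduction of (ii) to (i) via the definition of $F_{k,\lambda}(T)$ as an infimum over extensions is the standard way to pass from the single-frequency embedding to the full space. The only cosmetic difference is that the paper disposes of $\lambda$-uniformity by a rescaling remark, whereas you observe directly that the normalized measure $(d\xi)_\lambda$ makes the Plancherel constant $\lambda$-independent; both are valid, and yours is arguably cleaner since it avoids tracking how each norm transforms under dilation.
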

\begin{proof}
For a proof, see \cite[Lemma~3.1.,~p.~274]{IonescuKenigTataru2008} in Euclidean space and \\
\cite[Lemma~3.2,~3.3]{GuoOh2018} in the periodic case. Independence of the period length follows from rescaling.
\end{proof}
We state the energy estimate for the above short-time $X^{s,b}$-spaces. The proof, which is carried out on the real line in \cite[Proposition~3.2.,~p.~274]{IonescuKenigTataru2008} and in the periodic case in \cite[Proposition~4.1.]{GuoOh2018}, is omitted.
\begin{proposition}
\label{prop:linearShorttimeEnergyEstimate}
Let $T \in (0,1]$, $\lambda \geq 1$ and $u, v \in C([-T,T],H_\lambda^{\infty})$ satisfy the equation
\begin{equation*}
\partial_t u + (\partial_{x_1}^3 + \partial_{x_1} \partial_{x_2}^2) u =v \; \mathrm{ in } \; (-T,T) \times \lambda \mathbb{T}^2.
\end{equation*}
Then, we find the following estimate to hold for any $s \in \mathbb{R}$ with implicit constant independent of $s,T$ and $\lambda$:
\begin{equation*}
\Vert u \Vert_{F_\lambda^{s}(T)} \lesssim \Vert u \Vert_{E_\lambda^{s}(T)} + \Vert v \Vert_{N_\lambda^{s}(T)}.
\end{equation*}
\end{proposition}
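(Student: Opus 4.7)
The plan is to follow the standard Ionescu--Kenig--Tataru construction, as adapted to the periodic setting by Guo--Oh, keeping track of the fact that the implicit constant must be independent of the Sobolev index $s$, the time horizon $T$, and the period $\lambda$. Applying $P_N$ with $N = 2^k$ to the equation yields $\partial_t u_k + L u_k = v_k$, where $L = \partial_{x_1}^3 + \partial_{x_1} \partial_{x_2}^2$, $u_k = P_N u$, and $v_k = P_N v$. Since the three spaces $F_\lambda^{s}(T), E_\lambda^{s}(T), N_\lambda^{s}(T)$ are $N^{2s}$-weighted $\ell^2$ sums over dyadic blocks, it suffices to prove the dyadic estimate
\begin{equation*}
\|u_k\|_{F_{k,\lambda}(T)} \lesssim \|u_k\|_{E_{k,\lambda}(T)} + \|v_k\|_{N_{k,\lambda}(T)}
\end{equation*}
uniformly in $k \in \mathbb{N}_0$, $\lambda \geq 1$, and $T \in (0,1]$, and then square-sum with the weights $N^{2s}$; doing the proof at fixed dyadic frequency makes the $s$-independence automatic. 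Fix $t_k \in [-T,T]$ and choose an extension $\tilde v_k$ of $v_k$ to $\R$ with $\|\tilde v_k\|_{N_{k,\lambda}} \leq 2\|v_k\|_{N_{k,\lambda}(T)}$. By the definition of $F_{k,\lambda}(T)$ we must bound $\|\mathcal{F}[u_k \eta_0(2^k(t - t_k))]\|_{X_{k,\lambda}}$.

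On the support of $\eta_0(2^k(t - t_k))$, i.e., $|t - t_k| \lesssim 2^{-k}$, Duhamel's formula gives
\begin{equation*}
u_k(t) = W(t - t_k)\,u_k(t_k) + \int_{t_k}^{t} W(t - s)\, \tilde v_k(s)\, ds, \qquad W(t) = e^{-tL}.
\end{equation*}
The space-time Fourier transform of $\eta_0(2^k(t - t_k))\, W(t - t_k)\, u_k(t_k)$ equals, up to constants, $2^{-k}\hat\eta_0(2^{-k}(\tau - \varphi(\xi)))\, e^{-i t_k \varphi(\xi)}\,\hat u_k(t_k,\xi)$. Schwartz decay of $\hat\eta_0$ on the modulation scale $2^k$ renders the weighted sum $\sum_j 2^{j/2}\|\eta_j(\tau - \varphi(\xi))\cdot (\cdots)\|_{L^2_{(d\xi)_\lambda} L^2_\tau}$ convergent and bounded by a multiple of $\|u_k(t_k)\|_{L^2_\lambda} \leq \|u_k\|_{E_{k,\lambda}(T)}$, as desired.

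For the Duhamel contribution, one applies the algebraic identity from Ionescu--Kenig--Tataru that rewrites the cut-off Duhamel operator in Fourier as the sum of (i) a residue of the same shape as the homogeneous term above, already absorbed into $\|u_k\|_{E_{k,\lambda}(T)}$, and (ii) an integral operator with kernel of size $2^{-k}(1+2^{-k}|\tau - \tau'|)^{-4}$ applied to $\mathcal F[\tilde v_k\, \eta_0(2^k(t - t_k))](\tau',\xi)/(\tau' - \varphi(\xi) + i 2^k)$. Invoking the modulation estimates \eqref{eq:XkEstimateI}--\eqref{eq:XkEstimateIII} with $l = k$ then produces a bound by $\|\tilde v_k\|_{N_{k,\lambda}} \lesssim \|v_k\|_{N_{k,\lambda}(T)}$. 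No step is genuinely hard; the argument is essentially Fourier-side bookkeeping, and the $\lambda$-dependence enters only through the counting measure $(d\xi)_\lambda$, under which all kernel bounds are pointwise in $\xi$, so the resulting constants are uniform in $\lambda \geq 1$. The one subtle point is precisely the algebraic rewriting that replaces the bare resonance denominator $(\tau' - \varphi(\xi))^{-1}$ by the regularized symbol $(\tau' - \varphi(\xi) + i 2^k)^{-1}$ built into $N_{k,\lambda}$, which is exactly what the estimates \eqref{eq:XkEstimateII}--\eqref{eq:XkEstimateIII} are engineered to produce.
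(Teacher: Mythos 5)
Your proof reconstructs the standard Ionescu--Kenig--Tataru linear estimate argument (adapted to the periodic setting by Guo--Oh), which is precisely what the paper invokes by citation and omits: reduce to a single dyadic block, extend $v_k$ with comparable $N_{k,\lambda}$-norm, apply the frequency-localized Duhamel formula under the time cutoff $\eta_0(2^k(t-t_k))$, and control the resulting Fourier-side kernel via \eqref{eq:XkEstimateI}--\eqref{eq:XkEstimateIII}. One small imprecision: the residue term produced by the Duhamel operator after cutting off in time involves $\tilde v_k$ rather than $u_k(t_k)$, so it is controlled by the $N_{k,\lambda}$ norm via \eqref{eq:XkEstimateI} rather than being absorbed into $\|u_k\|_{E_{k,\lambda}(T)}$; this does not affect the conclusion.
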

Below we have to consider the action of sharp time cutoffs in the $X_k$-spaces. Recall from the usual $X^{s,b}$-space-theory that multiplication with a sharp cutoff in time is not bounded. However, we find the following estimate to hold:
\begin{lemma}{\cite[Lemma~3.5]{GuoOh2018}}
\label{lem:sharpTimeCutoffAlmostBounded}
Let $N = 2^n, \; n \in \mathbb{N}_0$ and $\lambda \geq 1$. Then, for any interval $I=[t_1,t_2] \subseteq \R$, we find the following estimate to hold:
\begin{equation*}
\sup_{j \geq 0} 2^{j/2} \Vert \eta_j(\tau-\varphi(\xi)) \mathcal{F}_{t,x}[1_{I}(t) P_N u] \Vert_{L_\tau^2 L^2_{(d\xi)_\lambda}} \lesssim \Vert \mathcal{F}_{t,x} (P_N u) \Vert_{X_{n,\lambda}}
\end{equation*}
with implicit constant independent of $n, \lambda$ and $I$.
\end{lemma}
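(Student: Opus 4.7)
The plan is to prove the estimate by a direct dyadic decomposition in the modulation variable, reducing it to a convolution estimate on the time-Fourier side. Setting $g = \mathcal{F}_{t,x}(P_N u)$ and $K(\tau) = \mathcal{F}_t(1_I)(\tau) = (e^{-i\tau t_1} - e^{-i\tau t_2})/(i\tau)$, one has
\[ \mathcal{F}_{t,x}[1_I(t) P_N u](\tau, \xi) = (K *_\tau g(\cdot, \xi))(\tau). \]
The only facts about $1_I$ that enter the argument are the pointwise bound $|K(\tau)| \lesssim \min(|I|, |\tau|^{-1})$ and the Plancherel identity $\|1_I v\|_{L^2_t} \le \|v\|_{L^2_t}$. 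Since the convolution acts only in $\tau$ and the whole argument is carried out pointwise in $\xi \in \mathbb{Z}^2/\lambda$, the resulting constant will automatically be independent of $\lambda$, $n$, and the interval $I$.

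I would first decompose $g = \sum_{l \ge 0} g_l$ with $g_l = \eta_l(\tau - \varphi(\xi)) g$, so that $2^{l/2} \|g_l\|_{L^2_\tau L^2_{(d\xi)_\lambda}} \le \|g\|_{X_{n,\lambda}}$ for every $l$. For each fixed output scale $j \ge 0$ I would split $\eta_j (K *_\tau g) = \sum_l \eta_j (K *_\tau g_l)$ and estimate each piece in three regimes. In the near-diagonal regime $|l - j| \le C$, Plancherel yields $\|K *_\tau g_l\|_{L^2} \le \|g_l\|_{L^2} \le 2^{-l/2}\|g\|_{X_{n,\lambda}}$, and with only $O(1)$ such $l$ the total contribution to $2^{j/2}\|\eta_j(K *_\tau g)\|_{L^2}$ is $\lesssim \|g\|_{X_{n,\lambda}}$. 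In the far regime $l < j - C$, the support conditions $|\tau - \varphi(\xi)| \sim 2^j$ and $|\tau' - \varphi(\xi)| \sim 2^l$ force $|\tau - \tau'| \sim 2^j$ inside the convolution, giving $|K(\tau - \tau')| \lesssim 2^{-j}$; Cauchy--Schwarz in $\tau'$ over $\supp g_l(\cdot, \xi)$ (of measure $\lesssim 2^l$), followed by integration in $\tau$ over $\supp \eta_j$ (of measure $\lesssim 2^j$), produces the per-$l$ bound $\|\eta_j (K *_\tau g_l)\|_{L^2} \lesssim 2^{(l-j)/2} \|g_l\|_{L^2}$. Multiplying by $2^{j/2}$ and summing yields $\sum_{l < j - C} 2^{l/2}\|g_l\|_{L^2} \le \|g\|_{X_{n,\lambda}}$. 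The symmetric far regime $l > j + C$ is treated identically: one obtains $\|\eta_j(K *_\tau g_l)\|_{L^2} \lesssim 2^{(j-l)/2}\|g_l\|_{L^2}$, and the resulting geometric series $\sum_{l > j} 2^{j - l}$ again sums to $\lesssim \|g\|_{X_{n,\lambda}}$ after multiplication by $2^{j/2}$.

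The main obstacle is arranging the bookkeeping so that the dyadic sum over input modulation scales $l$ at fixed output scale $j$ does not introduce a logarithmic loss from the merely $|\tau|^{-1}$-decay of $K$. The key point is that in both far regimes the support constraints force $|\tau - \tau'|$ to equal the larger of the two modulations, so the per-$l$ estimates come with a geometric factor $2^{-|l - j|/2}$ rather than a uniform bound in $l$; this combines with the $\ell^1$-structure of $\|g\|_{X_{n,\lambda}} = \sum_l 2^{l/2}\|g_l\|_{L^2}$ to exactly match the required weight $2^{j/2}$ on the output, without any additional logarithm. The supremum over $j$ is then immediate from the uniform per-$j$ estimate, and independence of the implicit constant from $n$, $\lambda$, and $I$ follows from the pointwise-in-$\xi$ structure of the argument together with the uniformity in $|I|$ of the tail bound $|K(\tau)| \lesssim |\tau|^{-1}$.
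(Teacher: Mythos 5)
Your proof is correct, and it captures exactly the mechanism that underlies the cited result. The paper itself gives no proof of this lemma (it is quoted from Guo--Oh, Lemma~3.5, and the same statement for the Euclidean analogue is Lemma~3.4 of Ionescu--Kenig--Tataru); the standard argument there uses the same two ingredients you isolate, namely $|\widehat{1_I}(\tau)|\lesssim\min(|I|,|\tau|^{-1})$ and the $L^2$--boundedness of multiplication by $1_I$, and then a case split according to modulation scales. The only cosmetic difference is in the bookkeeping: Ionescu--Kenig--Tataru and Guo--Oh typically split the $\tau'$--integral at fixed $j$ into the regions $|\tau-\tau'|\le 2^{j-10}$ and $|\tau-\tau'|>2^{j-10}$ and estimate each piece, whereas you decompose $g$ into its modulation pieces $g_l$ and split by the relation of $l$ to $j$. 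These are equivalent: in the near-diagonal region Plancherel gives the bound, in the far region the forced size of $|\tau-\tau'|$ lets the $|\tau|^{-1}$ decay of $\widehat{1_I}$ pay for the Cauchy--Schwarz losses, and the $\ell^1$-in-$l$ structure of $\Vert\cdot\Vert_{X_{n,\lambda}}$ absorbs the sum. Your remark on why one only gets $\sup_j$ on the left (sharp cutoffs degrade the $\ell^1$ modulation sum to $\ell^\infty$) and why the constant is independent of $n$, $\lambda$ and $I$ is also exactly right.
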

\section{Short-time nonlinear estimates}
\label{section:ShorttimeBilinearEstimates}
We recall short-time nonlinear estimates on $\lambda \T^2$ from \cite{Schippa2019HDBO} for $s>1/2$.

\begin{proposition}
\label{prop:ShorttimeNonlinearEstimates}
Let $\lambda \geq 1$, $ T \in (0,1]$, $1/2<s \leq s^\prime$. Then, we find the following estimates to hold for $u_1$, $u_2 \in F_\lambda^s(T)$:
\begin{align}
\label{eq:ShorttimeNonlinearEstimateI}
\Vert \partial_{x_1} (u_1 u_2) \Vert_{N_\lambda^{s}(T)} &\lesssim \Vert u_1 \Vert_{F_\lambda^{s}(T)} \Vert u_2 \Vert_{F_\lambda^{s}(T)}, \\
\label{eq:ShorttimeNonlinearEstimateII}
\Vert \partial_{x_1} (u_1 u_2) \Vert_{N_\lambda^{0}(T)} &\lesssim \Vert u_1 \Vert_{F_\lambda^{0}(T)} \Vert u_2 \Vert_{F_\lambda^{s}(T)}.
\end{align}
\end{proposition}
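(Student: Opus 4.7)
The plan is to reduce \eqref{eq:ShorttimeNonlinearEstimateI} and \eqref{eq:ShorttimeNonlinearEstimateII} to dyadic short-time bilinear estimates and then sum the resulting dyadic bounds using the regularity $s>1/2$.

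First, I would decompose $u_i = \sum_{N_i} P_{N_i} u_i$ dyadically and analyze $P_N \partial_{x_1}(P_{N_1} u_1 \cdot P_{N_2} u_2)$ in $N_{k,\lambda}(T)$ with $k = \log_2 N$. A Littlewood--Paley trichotomy leaves three scenarios: the high-high-to-low case $N \lesssim N_1 \sim N_2$, and the two symmetric high-low cases $N \sim N_1 \gg N_2$ and $N \sim N_2 \gg N_1$. Unfolding the definition of the $N_{k,\lambda}(T)$-norm, for each extension $\tilde u_i \in F_{k_i,\lambda}$ and each $t_k \in \R$ I would estimate
\[
\bigl\| (\tau - \varphi(\xi) + i 2^{k})^{-1} \mathcal{F}[\eta_0(2^{k}(t-t_k)) \cdot P_N \partial_{x_1}(P_{N_1} \tilde u_1 \cdot P_{N_2} \tilde u_2)] \bigr\|_{X_{k,\lambda}}.
\]
Decomposing each factor by the modulation scale $|\tau - \varphi(\xi)| \sim L_i$ and using Lemma \ref{lem:sharpTimeCutoffAlmostBounded} to handle sharp time cutoffs of size $\sim N_{\max}^{-1}$ reduces everything to an $L^2_{t,x}$-type bilinear estimate on a time interval of length $\sim N_{\max}^{-1}$.

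The crucial input is a short-time bilinear Strichartz inequality of the shape
\[
\|P_{N_1} v_1 \cdot P_{N_2} v_2\|_{L^2_{t,x}(I \times \lambda \T^2)} \lesssim \Gamma(\lambda, N_{\min}, N_{\max}) L_1^{1/2} L_2^{1/2} \|a_1\|_{L^2} \|a_2\|_{L^2},
\]
where $v_i = \mathcal{F}^{-1} a_i$ is supported in $G_{N_i, L_i}$ and $|I| \sim N_{\max}^{-1}$. The constant $\Gamma$ encodes the transversality gain and is proved by counting lattice points $\xi_1, \xi_2 \in \Z^2/\lambda$ with $\xi_1 + \xi_2$ fixed, $|\xi_i| \sim N_i$, subject to the resonance condition $|\varphi(\xi_1) + \varphi(\xi_2) - \varphi(\xi_1+\xi_2)| \lesssim L_{\max}$. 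Transversality of the two characteristic surfaces, measurable via the Jacobian determinant built from $\nabla \varphi$, yields a gain by a power of $N_{\min}/N_{\max}$ up to logarithmic losses from the lattice structure on $\lambda \T^2$. This is the content of the bilinear estimate proved in \cite{Schippa2019HDBO}.

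Combining the derivative loss $N$ from $\partial_{x_1}$, the factor $2^{-k} \sim N^{-1}$ from the modulation weight defining the $N_{k,\lambda}$-norm, and the bilinear Strichartz gain, the dyadic estimate becomes summable in $N_1, N_2$ precisely when $s > 1/2$: the worst case is a high-low interaction where Sobolev summability of the low frequency forces $s > 1/2$, while in the high-high-to-low regime $\ell^2$-summability in the output frequency $N$ together with Cauchy--Schwarz suffices. The asymmetric estimate \eqref{eq:ShorttimeNonlinearEstimateII} is then recovered by placing the low-regularity factor in $F_\lambda^0$ and ensuring the $s$-regularity is always spent on the higher-frequency factor. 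The main obstacle I anticipate is proving the short-time bilinear $L^2$ estimate with the correct transversality dependence on $\lambda \T^2$: small divisors and the discreteness of frequencies force a loss of roughly $3/4$ of a derivative compared to the Euclidean threshold $s > -1/4$ obtained in \cite{Kinoshita2019}.
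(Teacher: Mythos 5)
The paper does not actually carry out a proof of this proposition: it simply records that the $\lambda=1$ case is \cite[Prop.~7.5]{Schippa2019HDBO} and that the general $\lambda$ follows by rescaling. Your outline is a plausible reconstruction of the cited argument's architecture --- dyadic Littlewood--Paley trichotomy, unfolding of the $N_{k,\lambda}(T)$ norm with Lemma~\ref{lem:sharpTimeCutoffAlmostBounded} to handle sharp time cutoffs on a scale $\sim N_{\max}^{-1}$, decomposition by modulation, reduction to a short-time bilinear $L^2$ estimate with a transversality gain obtained by counting lattice points, and summation at $s>1/2$ with the high$\times$low interaction as the critical one --- and in that sense it matches the intent of the citation. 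Two remarks, though. First, your closing sentence conflates two different thresholds: $s>-1/4$ in \cite{Kinoshita2019} is the Euclidean \emph{local well-posedness} threshold, whereas $s>1/2$ here is the threshold for this short-time \emph{bilinear} estimate, which also only holds at $s>1/2$ in the short-time framework on $\R^2$; the genuine periodic loss due to lattice discreteness shows up in the trilinear \emph{energy} estimates of Section~\ref{section:ShorttimeEnergyEstimates}, not in this proposition. Second, the factor you ascribe to the weight $(\tau-\varphi(\xi)+i2^k)^{-1}$ is $\sim 2^{-k}$ only for modulations $\lesssim 2^k$; for modulations $L\gtrsim 2^k$ it behaves like $L^{-1}$, which is where the $L_{\max}^{-1/2}$ gain needed to absorb the derivative actually comes from, so the bookkeeping in the final summation is slightly more delicate than the sketch suggests.
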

\begin{proof}
The proof for $\lambda = 1$ is given in \cite[Prop.~7.5]{Schippa2019HDBO}; the general case follows from rescaling.
\end{proof}

\section{Global nonlinear Loomis-Whitney inequalities}
\label{section:NLW}
In this section, global nonlinear Loomis-Whitney inequalities are discussed. After globalizing local results in $\R^3$, we turn to Loomis-Whitney-type inequalities on $\R \times \Z^2/N$. The arguments from considering Euclidean space will be useful on $\R \times \Z^2/N$.

\subsection{Loomis-Whitney inequalities on $\R^3$}

For $i=1,2,3$, letting \\
$S_i:=\{(x_1,x_2,x_3) \in \R^3 \, | \, x_i=0\}$, the classical Loomis-Whitney inequality in $\R^3$ is described as
\[
\| f_1 * f_2 \|_{L^2 (S_3)} \leq \|f_1 \|_{L^2(S_1)} \| f_2 \|_{L^2(S_2)}.
\]
 Note that the case of fully transverse hyperplanes, quantified by $A$ in Assumption \ref{AssumptionSurfaces}.(iii), is recovered by a change of variables, and we find the above estimate to hold with constant $A^{1/2}$.
If $S_1$, $S_2$, $S_3$ are oriented hypersurfaces in $\R^3$, then the above is called the nonlinear Loomis-Whitney inequalities in $\R^3$. 

The estimate for fully transverse hyperplanes was extended to $C^3$-hypersurfaces in
\cite{BennettCarberyWright2005} by Bennett-Carbery-Wright. 
Then, Bejenaru-Herr-Tataru relaxed the regularity conditions of the 
hypersurfaces in \cite{BejenaruHerrTataru2010} by employing induction on scales. In these results, the transversality of the oriented hypersurfaces determines the constant for which the estimate from the above display holds, which matches the case of hyperplanes. The constant also depends on regularity properties of the surfaces; see Assumption \ref{AssumptionSurfaces} below. Furthermore, the results from \cite{BennettCarberyWright2005,BejenaruHerrTataru2010} are local, i.e., these are only stated for bounded hypersurfaces.

Nonlinear Loomis-Whitney inequalities yield smoothing effects in Euclidean space related to bilinear Strichartz estimates.
This cannot hold on $\R \times \Z^2/N$, due to the discrete nature of the counting measure. One can well have a fully transverse interaction of three frequencies on a lattice, which cannot yield any smoothing effect. See the end of this section for an example.

Before turning to Loomis-Whitney-type inequalities on $\R \times \Z^2/N$, we shall see how to remove the locality assumption in Euclidean space. The underlying argument will be crucial to handle the discrete case. Our argument is related to a recent work by Koch-Steinerberger \cite{KochSteinerberger2015}. In \cite[Theorem~2.1,~p.~1226]{KochSteinerberger2015} a global result for hypersurfaces described as Lipschitz graphs is given. However, the stated dependence on the transversality constant is worse than in the case of hyperplanes in \cite{KochSteinerberger2015}.\\
The argument in \cite{KochSteinerberger2015} does not make use of induction on scales, contrary to \cite{BejenaruHerrTataru2010}, but relies entirely on suitable decompositions and almost orthogonality. Our proof is also based on decompositions of the hypersurfaces and almost orthogonality. We improve the dependence on the transversality given in \cite{KochSteinerberger2015} for hypersurfaces slightly more regular than Lipschitz, which we do not cover.

In the following we consider $C^{1,\beta}$-hypersurfaces given as rotated graphs of $C^{1,\beta}$-functions following \cite[Assumption~1.1]{BejenaruHerr2011}.
\begin{assumption}\label{AssumptionSurfaces}
For $i=1,2,3$ there exist $0 < \beta \leq 1$, $b > 0$, $A \geq 1$, $F_i \in C^{1,\beta}(\mathcal{U}_i)$, where the $\mathcal{U}_i$ denote open and convex sets in $\R^2$ and $G_i \in O(3)$ such that
\begin{enumerate}
\item[(i)] the oriented surfaces $S_i$ are given by
\begin{equation*}
S_i = G_i gr(F_i), \quad 
gr(F_i) = \{ (x,y,z) \in \R^3 \; | \; z = F_i(x,y), \; (x,y) \in \mathcal{U}_i \}.
\end{equation*}
\item[(ii)] the unit normal vector field $\mathfrak{n}_i$ on $S_i$ satisfies the H\"older condition
\begin{equation}
\label{eq:HoelderConditionUnitNormals}
\sup_{\sigma, \tilde{\sigma} \in S_i} \frac{|\mathfrak{n}_i(\sigma) - \mathfrak{n}_i(\tilde{\sigma})|}{|\sigma - \tilde{\sigma}|^\beta} + \frac{|\mathfrak{n}_i(\sigma)(\sigma - \tilde{\sigma})|}{|\sigma - \tilde{\sigma}|^{1+\beta
}} \leq b;
\end{equation}
\item[(iii)] the matrix $N(\sigma_1,\sigma_2,\sigma_3) = (\mathfrak{n}_1(\sigma_1),\mathfrak{n}_2(\sigma_2),\mathfrak{n}_3(\sigma_3))$ satisfies the transversality condition
\begin{equation}
\label{eq:TransversalityAssumption}
A^{-1} \leq \det N(\sigma_1,\sigma_2,\sigma_3) \leq 1
\end{equation}
for all $(\sigma_1,\sigma_2,\sigma_3) \in S_1 \times S_2 \times S_3$.
\end{enumerate}
\end{assumption}
Under Assumption \ref{AssumptionSurfaces}, we establish the nonlinear Loomis-Whitney inequalities without locality assumptions on $S_i$. 
\begin{theorem}
\label{thm:GlobalNonlinearLoomisWhitneyInequalityR3}
Suppose that $(S_i)_{i=1}^3$ satisfies Assumption \ref{AssumptionSurfaces}. Then, for each $f \in L^2(S_1)$ and $g \in L^2(S_2)$, we have
\begin{equation*}
\Vert f * g \Vert_{L^2(S_3)} \leq C A^{1/2} \Vert f \Vert_{L^2(S_1)} \Vert g \Vert_{L^2(S_2)},
\end{equation*}
where the constant $C>0$ is independent of $\beta$ and $b$.
\end{theorem}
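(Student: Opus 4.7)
The plan is to reduce the global estimate to the local nonlinear Loomis--Whitney inequality of Bejenaru--Herr--Tataru \cite{BejenaruHerrTataru2010} through a decomposition of the hypersurfaces followed by an almost-orthogonality argument, in the spirit of Koch--Steinerberger \cite{KochSteinerberger2015}.

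First, using the H\"older continuity of the unit normals from \eqref{eq:HoelderConditionUnitNormals}, I fix a scale $\delta = \delta(b,\beta) > 0$, crucially \emph{independent} of the transversality constant $A$, small enough that on any subset of $S_i$ of diameter at most $\delta$ the normal $\mathfrak{n}_i$ varies by at most a small absolute constant, so each patch is well approximated by a hyperplane. I then partition each $S_i$ into essentially disjoint patches $\{P_i^{(k)}\}_k$ of diameter at most $\delta$ and set $f_{k_1} = f \, \mathbf{1}_{P_1^{(k_1)}}$ and $g_{k_2} = g \, \mathbf{1}_{P_2^{(k_2)}}$.

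Second, I apply the local nonlinear Loomis--Whitney inequality of \cite{BejenaruHerrTataru2010} to each triple $(P_1^{(k_1)}, P_2^{(k_2)}, P_3^{(k_3)})$. The transversality on the patches is inherited from the global one, and the regularity hypothesis holds by the choice of $\delta$, yielding
\begin{equation*}
\|f_{k_1} * g_{k_2}\|_{L^2(P_3^{(k_3)})} \leq C A^{1/2} \|f_{k_1}\|_{L^2(P_1^{(k_1)})} \|g_{k_2}\|_{L^2(P_2^{(k_2)})}
\end{equation*}
with $C$ absolute.

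Third, I assemble these local estimates into the global one through almost orthogonality. The contribution $(f_{k_1} * g_{k_2})|_{P_3^{(k_3)}}$ is nontrivial only when $(P_1^{(k_1)} + P_2^{(k_2)}) \cap P_3^{(k_3)} \neq \emptyset$, and each such sum set has diameter at most $2\delta$. Elementary packing in $\R^3$ at the scale $\delta$ then gives slice-multiplicity bounds: for any two fixed indices among $(k_1, k_2, k_3)$, only $O(1)$ choices of the third index produce a nontrivial contribution, with constants independent of $A$, $b$, $\beta$. A Schur-type summation exploiting these three slice bounds simultaneously, combined with the local estimate, then yields
\begin{equation*}
\|f * g\|_{L^2(S_3)}^2 \leq C A \|f\|_{L^2(S_1)}^2 \|g\|_{L^2(S_2)}^2.
\end{equation*}

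I expect the main technical difficulty to lie in the almost-orthogonality step. The total number of admissible pairs $(k_1, k_2)$ contributing to a single $P_3^{(k_3)}$ is a priori unbounded, since the transversal curve $\{(\sigma_1, \sigma_2) \in S_1 \times S_2 : \sigma_1 + \sigma_2 \in P_3^{(k_3)}\}$ can be long, so a naive Cauchy--Schwarz in the pair index loses a factor depending on the global extent of the surfaces. One must exploit the \emph{joint} structure of all three slice multiplicities, together with the near-disjointness of the sum sets at scale $\delta$, to absorb this apparent loss. The key conceptual point is that $\delta$ is chosen depending only on $b, \beta$ and not on $A$, so the only $A$-dependence enters through the local estimate; this separation of scales is the essential improvement over the Lipschitz-graph result of \cite{KochSteinerberger2015}.
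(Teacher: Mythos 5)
Your plan differs from the paper's at every stage, and the crucial assembly step has a real gap. The paper does \emph{not} use the local Bejenaru--Herr--Tataru inequality at all: it first proves a version for $\varepsilon$-thickened hypersurfaces (Theorem \ref{thm:GlobalNonlinearLoomisWhitneyInequality}), decomposing only $S_3(\varepsilon)$ into balls $B_{\varepsilon,j}$ of radius $\varepsilon$ and applying only elementary Cauchy--Schwarz on each ball. The reassembly then rests on the overlap estimate \eqref{eq:RelevantJ}: for a fixed \emph{input pair} $(\lambda_1,\lambda_2)\in S_1(\varepsilon)\times S_2(\varepsilon)$, at most $\lesssim A$ output balls $B_{\varepsilon,j}$ can receive a contribution, and this is proved via the thickened triple-intersection estimate of Proposition \ref{prop:IntersectionThickenedHypersurface}. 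Taking $\varepsilon \to 0$ then gives the theorem, with the $\beta,b$-dependence disappearing in the limit. By contrast, you partition all three surfaces at a fixed scale $\delta(b,\beta)$, take BHT as a black box on each patch triple, and propose reassembling by a Schur argument using slice-multiplicity bounds of the form ``fix two of $(k_1,k_2,k_3)$, get $O(1)$ choices of the third.''

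The gap is in that last step, and it is more than a technical nuisance. The three slice-multiplicity bounds are \emph{not} by themselves sufficient for the Schur bound
\begin{equation*}
\sum_{(k_1,k_2,k_3)\in R} a_{k_1}b_{k_2}c_{k_3} \lesssim \|a\|_{\ell^2}\|b\|_{\ell^2}\|c\|_{\ell^2}.
\end{equation*}
A concrete counterexample: take $R=\{(i,j,i+j): i,j\in\Z/N\Z\}$, which has all three slice multiplicities equal to $1$, and take $a_i=b_j=c_k=N^{-1/2}$; then $\sum_R abc = N^{1/2}\|a\|\|b\|\|c\|$. So the claimed trilinear Schur loses an unbounded factor in the worst case, precisely because the number of admissible $(k_1,k_2)$ for a given $k_3$ is unbounded along the transversal curve, exactly the difficulty you flag. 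You suggest exploiting the ``joint structure of all three slice multiplicities'' and near-disjointness of the sum sets, but the sum sets $P_1^{(k_1)}+P_2^{(k_2)}$ for distinct admissible pairs contributing to the same $P_3^{(k_3)}$ all fit inside a ball of radius $O(\delta)$ and are therefore not disjoint, and no further combinatorial constraint is identified that would rule out the Latin-square-type behavior. One needs a genuinely geometric overlap bound of the paper's type -- controlling, for a fixed input \emph{pair} of points, the number of output patches reached -- and proving that bound was precisely the content of Proposition \ref{prop:IntersectionThickenedHypersurface}. There is also a secondary issue: since your $\delta$ and the BHT constant both depend on $b,\beta$, your final constant would inherit this dependence, whereas the theorem claims $C$ independent of $\beta,b$; the paper sidesteps this because the only $b,\beta$-dependence in its argument sits in a smallness condition on $\varepsilon$, which vanishes as $\varepsilon\to 0$.
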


Note that in \cite{BejenaruHerrTataru2010}, though the sharp dependence on $A$ is obtained, the constant $C$ in the above display depends on $\beta$ and $b$.
To begin with the proof, we see how we can quantify the overlap of thickened hypersurfaces. 
We write $S_i(\varepsilon) = G_i \{ (x,y,z) \in \mathcal{U}_i \times \R \;| \; |z-F_i(x,y)| < \varepsilon \}$ with notations from above and define $\chi_M$ as the characteristic function of a set $M$.
\begin{proposition}
\label{prop:IntersectionThickenedHypersurface}
Suppose that $(S_i)_{i=1}^3$ satisfies Assumption \ref{AssumptionSurfaces}. Then, for $\varepsilon >0$, 
the following estimate holds true:
\begin{equation}
\label{eq:IntersectionThickenedHypersurfaces}
\int_{\R^3} \chi_{S_1(\varepsilon)}(x) \chi_{S_2(\varepsilon)}(x) \chi_{S_3(\varepsilon)}(x) dx \lesssim A \varepsilon^3,
\end{equation}
where the implicit constant is independent of $\beta$ and $b$.
\end{proposition}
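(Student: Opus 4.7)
Plan. The estimate reduces to the trivial case of three transverse hyperplanes, whose $\varepsilon$-thickened intersection is a parallelepiped of volume $\varepsilon^{3}/|\det N|\leq A\varepsilon^{3}$. My task is to control the curvature and the global extent of the $S_{i}$ without incurring a penalty depending on $\beta$ or $b$.

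First I pick a mesoscopic scale $r>0$ small enough that both H\"older conditions in \eqref{eq:HoelderConditionUnitNormals} bite. The bound on $|\mathfrak{n}_{i}(\sigma)-\mathfrak{n}_{i}(\tilde\sigma)|$ ensures that the normals $\mathfrak{n}_{i}$ are essentially constant on patches of diameter $r$ provided $br^{\beta}$ is a small fraction of $A^{-1}$, so that the transversality constant $\sim A$ persists when the $\sigma_{i}$ are restricted to fixed patches. The bound on $|\mathfrak{n}_{i}(\sigma)\cdot(\sigma-\tilde\sigma)|$ ensures that each graph $z=F_{i}(x,y)$ lies within vertical distance $\lesssim\varepsilon$ of its tangent plane on such a patch, provided $br^{1+\beta}\lesssim\varepsilon$. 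I then decompose each $S_{i}$ into patches $\{S_{i,\alpha}\}_{\alpha}$ of diameter $r$ with bounded overlap.

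For each triple $(\alpha_{1},\alpha_{2},\alpha_{3})$, the thickened set $\bigcap_{i}S_{i,\alpha_{i}}(\varepsilon)$ lies inside the intersection of three $O(\varepsilon)$-slabs around the tangent planes $T_{i,\alpha_{i}}$. The preserved transversality yields $|\det N|\geq cA^{-1}$ for any choice of $\sigma_{i}$ in the patches, so this parallelepiped has volume $\lesssim A\varepsilon^{3}$. The pointwise bound $\chi_{\bigcap_{i}S_{i}(\varepsilon)}\leq\sum_{(\alpha_{1},\alpha_{2},\alpha_{3})}\chi_{\bigcap_{i}S_{i,\alpha_{i}}(\varepsilon)}$ then reduces the proposition to controlling the total contribution of non-empty patch-triples without a factor depending on the cardinality of the decomposition.

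This is the principal obstacle, and I would address it by a change-of-variables/almost-orthogonality argument in the spirit of Koch--Steinerberger \cite{KochSteinerberger2015}. Introduce the signed-height map $\Phi=(\phi_{1},\phi_{2},\phi_{3})$ with $\phi_{i}(x)=(G_{i}^{\mathrm{T}}x)_{3}-F_{i}((G_{i}^{\mathrm{T}}x)_{1},(G_{i}^{\mathrm{T}}x)_{2})$, so that $S_{i}(\varepsilon)=\{|\phi_{i}|<\varepsilon\}$ and the columns of $D\Phi$ are nonvanishing multiples of the unit normals $\mathfrak{n}_{i}$. Transversality gives $|\det D\Phi|\gtrsim A^{-1}$, so that on each patch $\Phi$ is a diffeomorphism with Jacobian distortion $\lesssim A$. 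Changing variables $y=\Phi(x)$ sends $\bigcap_{i}S_{i}(\varepsilon)$ into $(-\varepsilon,\varepsilon)^{3}$, and a patch-to-patch almost-orthogonality statement for the $\Phi$-image, driven purely by the uniform transversality and not by a curvature scale, yields $|\bigcap_{i}S_{i}(\varepsilon)|\lesssim A(2\varepsilon)^{3}$. The delicate point, and the reason $b$ and $\beta$ may enter only the choice of $r$ but not the final constant, is precisely that the overlap count in the image of $\Phi$ must be absorbed by transversality alone.
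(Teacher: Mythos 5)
There is a genuine gap, and you have identified it yourself: the ``patch-to-patch almost-orthogonality statement for the $\Phi$-image, driven purely by the uniform transversality'' is asserted but never proved, and it is precisely the nontrivial content of the proposition. Your decomposition into patch triples gives $\lesssim A\varepsilon^3$ per nonempty triple, but without a bound on the \emph{number} of contributing triples -- equivalently, on the multiplicity $\#(\Phi^{-1}(y)\cap\bigcap_i S_i(\varepsilon))$ for $y\in(-\varepsilon,\varepsilon)^3$ in your change-of-variables reformulation -- the sum could be as large as the number of patches, which is unbounded for unbounded hypersurfaces. Nothing in your write-up actually shows that only $O(1)$ patch triples meet $\bigcap_i S_i(\varepsilon)$.

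The paper closes exactly this gap by a short confinement argument that makes any decomposition unnecessary. Pick any $p$ in the triple intersection. If $q$ is another point of the triple intersection with $d(p,q)\geq 100A\varepsilon$, then for each $i$ one can find $p_i,q_i\in S_i$ with $d(p_i,p),d(q_i,q)\leq\varepsilon$, hence $d(p_i,q_i)\geq 98A\varepsilon$; by the mean value theorem there is a unit normal $\mathfrak{n}_i$ of $S_i$ with $\mathfrak{n}_i\perp p_i-q_i$, so all three normals are nearly orthogonal to the single direction $p-q$, forcing $|\det(\mathfrak{n}_1,\mathfrak{n}_2,\mathfrak{n}_3)|\ll A^{-1}$ and contradicting \eqref{eq:TransversalityAssumption}. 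Therefore $\bigcap_i S_i(\varepsilon)\subseteq B(100A\varepsilon,p)$: the triple intersection lives in a \emph{single} ball of radius $O(A\varepsilon)$. After reducing to $\varepsilon\ll b^{-1/\beta}A^{-(1+1/\beta)}$ (which is harmless, as larger $\varepsilon$ is handled by covering each $S_i(\varepsilon)$ with $O(\varepsilon/\varepsilon_0)$ translates of a thinner slab), the $C^{1,\beta}$ condition shows that $S_i(\varepsilon)\cap B(100A\varepsilon,p)\subseteq T_{p_i}S_i(C\varepsilon)$, and the hyperplane computation finishes. This confinement step is the substitute for your almost-orthogonality claim; supplying it would complete your proof, but as written the argument does not establish the result.
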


\begin{proof}
Clearly, by the definitions of $S_i(\varepsilon)$, we may assume that $\varepsilon= \varepsilon (A,\beta,b)$ is sufficiently small. 
We start with the elementary case that $S_i$ are three transverse hyperplanes $(H_i)_{i=1}^3$. The estimate
\begin{equation*}
\int_{\R^3} \chi_{H_1(\varepsilon)}(x) \chi_{H_2(\varepsilon)}(x) \chi_{H_3(\varepsilon)}(x) dx \lesssim A \varepsilon^3
\end{equation*}
follows from a linear change of variables, mapping the normals of the hyperplanes to the unit matrix.

We turn to the nonlinear case. Let $p \in S_1(\varepsilon) \cap S_2(\varepsilon) \cap S_3(\varepsilon)$, as for an empty intersection there is nothing to show. We observe that
\begin{equation*}
\int_{\R^3} \chi_{S_1(\varepsilon)}(x) \chi_{S_2(\varepsilon)}(x) \chi_{S_3(\varepsilon)}(x) dx = \int_{B(100 A \varepsilon,p)}  \chi_{S_1( \varepsilon)}(x) \chi_{S_2(\varepsilon)}(x) \chi_{S_3(\varepsilon)}(x) dx.
\end{equation*}
To confine the range of integration to $B(100 A \varepsilon,p)$, suppose that there is $q \in S_1(\varepsilon) \cap S_2(\varepsilon) \cap S_3(\varepsilon)$ with $d(p,q) \geq 100 A \varepsilon$. For $i=1,2,3$ we can find $q_i \in S_i$ with $d(q_i,q) \leq \varepsilon$ and $p_i \in S_i$ with $d(p_i,p) \leq \varepsilon$. We have $d(q_i,p_i) \geq 98 A\varepsilon$. By the mean-value theorem, we find a normal vector $\mathfrak{n}_i$ of $S_i$ with $\mathfrak{n}_i \perp p_i - q_i$. It is straight-forward to check that $|\det(\mathfrak{n}_1,\mathfrak{n}_2,\mathfrak{n}_3)| \ll A^{-1}$. This contradiction allows us to bound the domain of integration like above.

To reduce the nonlinear case to the case of hyperplanes, we shall approximate $S_i(\varepsilon) \cap B(100 \varepsilon,p)$ with $T_{p_i} S_i(C \varepsilon)$, $p_i \in S_i$, $d(p_i,p) \leq \varepsilon$. Here, $T_p S$ denotes the tangent space at $S$, as a subset of $\R^3$.
Observe that by the $C^{1,\beta}$-property Assumption \ref{AssumptionSurfaces}.(ii), $\lambda_i \in S_i$ satisfy the estimate
\begin{equation*}
 |\mathfrak{n}_i(p_i) \cdot (p_i- \lambda_i)| \leq b |p_i - \lambda_i|^{1+\beta}.
\end{equation*}
 For $\varepsilon \ll b^{-1/\beta} A^{-(1+1/\beta)}$, we find $T_{p_i} S_i(C \varepsilon) \supseteq B(100 A \varepsilon,p) \cap S_i(\varepsilon)$.

To finish the proof, we estimate by our considerations in the case of hyperplanes
\begin{equation*}
\int_{B(100 A \varepsilon,p)} \chi_{T_{p_1} S_1(C \varepsilon)}(x) \chi_{T_{p_2} S_2(C \varepsilon)}(x) \chi_{T_{p_3} S_3(C \varepsilon)}(x) dx \lesssim A \varepsilon^3.
\end{equation*}
This completes the proof.
\end{proof}
By the above proposition, we show the following global nonlinear Loomis-Whitney inequality for thickened hypersurfaces. This will allow us to remove the locality assumption from \cite{BejenaruHerrTataru2010} by taking the thickness to zero in the next subsection.
\begin{theorem}
\label{thm:GlobalNonlinearLoomisWhitneyInequality}
Let $A$ be dyadic and $f_i \in L^2(S_i(\varepsilon))$, $i=1,2$. Suppose that $(S_i)_{i=1}^3$ satisfies Assumption \ref{AssumptionSurfaces}. Then, for $\varepsilon>0$ we find the following estimate to hold
\begin{equation}\label{eq:Theorem4.3}
\Vert f_1 * f_2 \Vert_{L^2(S_3(\varepsilon))} \lesssim \varepsilon^{3/2} A^{1/2} \Vert f_1 \Vert_{L^2(S_1(\varepsilon))} \Vert f_2 \Vert_{L^2(S_2(\varepsilon))},
\end{equation}
where the implicit constant is independent of $\beta$ and $b$.
\end{theorem}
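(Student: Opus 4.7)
The plan is to combine a model-case computation for three transverse hyperplanes with a cap decomposition of the $C^{1,\beta}$ surfaces, summed almost-orthogonally using Proposition \ref{prop:IntersectionThickenedHypersurface}.

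First I would establish the bound in the model case of three transverse affine hyperplanes $H_1,H_2,H_3$ with $|\det N(\mathfrak{n}_1,\mathfrak{n}_2,\mathfrak{n}_3)| \ge A^{-1}$. Applying a linear change of variables that sends the three unit normals to the standard basis of $\R^3$ turns the hyperplanes into the coordinate hyperplanes $\{x_i=0\}$, and the Jacobian of this map, comparable to $A$, produces the factor $A^{1/2}$ in the claimed inequality. After this reduction, $H_i(\varepsilon)$ is (up to constants) the slab $\{|x_i|\le \varepsilon\}$, the convolution bound factors over the three coordinate directions, and a direct application of Fubini and Cauchy-Schwarz gives $\|f_1*f_2\|_{L^2(H_3(\varepsilon))}\lesssim A^{1/2}\varepsilon^{3/2}\|f_1\|_{L^2}\|f_2\|_{L^2}$ with an absolute constant.

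For the $C^{1,\beta}$ setting, I would decompose $S_i(\varepsilon)=\bigsqcup_{\alpha} \tau_i^\alpha$ into caps of diameter $\delta$ small enough that the H\"older condition \eqref{eq:HoelderConditionUnitNormals} forces each $\tau_i^\alpha$ to lie in the $C\varepsilon$-thickening of the tangent hyperplane to $S_i$ at the cap center $p_i^\alpha$; a choice $\delta \sim (b^{-1}\varepsilon)^{1/(1+\beta)}$ suffices. Writing $f_i=\sum_\alpha f_i^\alpha$ with $f_i^\alpha$ supported in $\tau_i^\alpha$, the hyperplane step of the first part applied to each triple of tangent planes (whose transversality is still controlled by $A$ by continuity of $\mathfrak{n}_i$) yields the local bound $\|f_1^{\alpha_1}*f_2^{\alpha_2}\|_{L^2(\tau_3^{\alpha_3})}\lesssim A^{1/2}\varepsilon^{3/2}\|f_1^{\alpha_1}\|_{L^2}\|f_2^{\alpha_2}\|_{L^2}$.

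The main obstacle is the almost-orthogonal summation of these local estimates, which is precisely where Proposition \ref{prop:IntersectionThickenedHypersurface} enters. The convolution $f_1^{\alpha_1}*f_2^{\alpha_2}$ is supported in an $O(\delta)$-neighborhood of $p_1^{\alpha_1}+p_2^{\alpha_2}$, and the proof of Proposition \ref{prop:IntersectionThickenedHypersurface} shows that any admissible triple of cap centers $(p_1^{\alpha_1},p_2^{\alpha_2},p_3^{\alpha_3})$ whose thickened caps jointly interact must be confined to a ball of radius $\lesssim A\varepsilon$. Consequently, for each fixed $\tau_3^{\alpha_3}$ the set of pairs $(\alpha_1,\alpha_2)$ contributing nontrivially is controlled, and a Schur-type estimate combined with Cauchy-Schwarz then assembles the local estimates into the claimed global bound. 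Since the constant in Proposition \ref{prop:IntersectionThickenedHypersurface} is independent of $b$ and $\beta$ and the hyperplane step is $b,\beta$-free, the final constant $C$ inherits this independence; the H\"older parameters appear only in the cap size $\delta$, which does not enter the final estimate.
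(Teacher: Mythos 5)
Your proposal takes a genuinely different route from the paper, and as sketched the summation step has a gap. The paper never decomposes $S_1,S_2$ into caps and never applies a local hyperplane Loomis--Whitney estimate. Instead it covers $\R^3$ (hence $S_3(\varepsilon)$) by a finitely overlapping family of $\varepsilon$-balls $B_{\varepsilon,j}$ and on each ball uses only the Cauchy--Schwarz inequality, which produces the factor $\varepsilon^{3/2}=|B_{\varepsilon,j}|^{1/2}$ with no transversality input at all at this local stage. The factor $A$ comes entirely from the overlap count \eqref{eq:RelevantJ}: for fixed $(\lambda_1,\lambda_2)\in S_1(\varepsilon)\times S_2(\varepsilon)$, the number of indices $j$ with $(\lambda_1,\lambda_2)\in S_{1,j,\varepsilon}\times S_{2,j,\varepsilon}$ is $\lesssim A$, shown by exhibiting translated surfaces $S_1',S_2'$ with $B_{\varepsilon,j}\subset S_1'(6\varepsilon)\cap S_2'(6\varepsilon)\cap S_3(6\varepsilon)$ and then applying Proposition \ref{prop:IntersectionThickenedHypersurface} to bound the volume of this triple intersection.

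The concrete gap is your orthogonality claim. You assert that admissible triples of cap centers $(p_1^{\alpha_1},p_2^{\alpha_2},p_3^{\alpha_3})$ are confined to a ball of radius $\lesssim A\varepsilon$ and that for fixed $\tau_3^{\alpha_3}$ the contributing pairs $(\alpha_1,\alpha_2)$ are controlled. Neither is correct. Proposition \ref{prop:IntersectionThickenedHypersurface} confines the single set $S_1(\varepsilon)\cap S_2(\varepsilon)\cap S_3(\varepsilon)\subset\R^3$ to an $O(A\varepsilon)$-ball; it says nothing about configurations $(p_1,p_2,p_3)\in S_1\times S_2\times S_3$ with $p_1+p_2\approx p_3$, which form a two-parameter family that cannot be confined. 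For a fixed $\delta$-cap $\tau_3^{\alpha_3}$ the sum map $(\lambda_1,\lambda_2)\mapsto\lambda_1+\lambda_2$ on the four-dimensional $S_1\times S_2$ has one-dimensional fibers, so the contributing pairs fill a $\delta$-neighborhood of a curve and number on the order of $\mathrm{diam}(S_i)/\delta\gg 1$; any Cauchy--Schwarz or Schur summation as you describe then loses a power of this cardinality and the clean constant $A^{1/2}\varepsilon^{3/2}$ is not recovered. The correct counting direction is the paper's: over the $\varepsilon$-balls $j$ on $S_3$, with $(\lambda_1,\lambda_2)$ fixed. As a secondary issue, your cap scale $\delta\sim(b^{-1}\varepsilon)^{1/(1+\beta)}$ reintroduces $b,\beta$ into the decomposition, so once the overlap count is off by a factor of order $\mathrm{diam}/\delta$ it is also unclear that the final constant would remain $b,\beta$-free as claimed.
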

\begin{proof}
Let $\{ B_{\varepsilon,j} \}_{j \in \mathbb{N}}$ denote a finitely overlapping family of balls with radius $\varepsilon$ covering $\R^3$. 
Set $J_{3,\varepsilon} = \{ j \in \mathbb{N} \; | \; B_{\varepsilon,j} \cap S_3(\varepsilon) \neq \emptyset \}$ and $S_{3,j}(\varepsilon) = B_{\varepsilon,j} \cap S_3(\varepsilon)$. We break up the support of $f_3$ into $\{ S_{3,j}(\varepsilon) \}_{j \in J_{3,\varepsilon}}$ to get
\begin{equation*}
\left| \int_{\R^3} (f_1 * f_2)(\lambda) f_3(\lambda) d\lambda \right| \leq \sum_{j \in J_{3,\varepsilon}} \left| \int_{\R^3} (f_1 * f_2)(\lambda) f_3 \vert_{S_{3,j}(\varepsilon)}(\lambda) d\lambda \right|.
\end{equation*}
We use the Cauchy-Schwarz inequality to estimate the single contributions
\begin{equation*}
\left| \int_{\R^3} (f_1 * f_2)(\lambda) f_3 \vert_{S_{3,j}(\varepsilon)}(\lambda) d\lambda \right| \lesssim \varepsilon^{3/2} \Vert f_1 \Vert_{L^2(S_{1,j,\varepsilon})} \Vert f_2 \Vert_{L^2(S_{2,j,\varepsilon})} \Vert f_3 \Vert_{L^2(S_{3,j}(\varepsilon))},
\end{equation*}
where
\begin{equation*}
\begin{split}
S_{1,j,\varepsilon} &= \{ \lambda_1 \in S_1(\varepsilon) \; | \; \exists \lambda^\prime \in B_{\varepsilon,j}: \; \lambda^\prime - \lambda_1 \in S_2(\varepsilon) \}, \\
 S_{2,j,\varepsilon} &= \{ \lambda_2 \in S_2(\varepsilon) \; | \; \exists \lambda^\prime \in B_{\varepsilon,j}: \; \lambda^\prime - \lambda_2 \in S_1(\varepsilon) \}.
\end{split}
\end{equation*}
By the Cauchy-Schwarz inequality, for all $(\lambda_1,\lambda_2) \in S_1(\varepsilon) \times S_2(\varepsilon)$, it suffices to show
\begin{equation}
\label{eq:RelevantJ}
\sum_{j \in J_{3,\varepsilon}} \chi_{S_{1,j,\varepsilon} \times S_{2,j,\varepsilon}}(\lambda_1,\lambda_2) \lesssim A.
\end{equation}
Indeed, assuming \eqref{eq:RelevantJ}, we conclude
\begin{equation*}
\begin{split}
&\quad \sum_j \left| \int_{\R^3} (f_1 * f_2)(\lambda) f_3 \vert_{S_{3,j}(\varepsilon)}(\lambda) d\lambda \right| \\
 &\lesssim \varepsilon^{3/2} \sum_j \Vert f_1 \Vert_{L^2(S_{1,j,\varepsilon})} \Vert f_2 \Vert_{L^2(S_{2,j,\varepsilon})} \Vert f_3 \Vert_{L^2(S_{3,j}(\varepsilon))} \\
&\lesssim \varepsilon^{3/2} \left( \sum_j \Vert f_3 \Vert_{L^2(S_{3,j}(\varepsilon))}^2 \right)^{1/2} \left( \sum_j \Vert f_1 \Vert^2_{L^2(S_{1,j,\varepsilon})} 
{\Vert f_2 \Vert^2_{L^2(S_{2,j,\varepsilon})}}^2 \right)^{1/2} \\
&\lesssim \varepsilon^{3/2} \Vert f_3 \Vert_{L^2(S_3(\varepsilon))} \left( \sum_j \Vert f_1 \chi_{S_{1,j,\varepsilon}} \Vert_{L^2}^2 \Vert f_2 \chi_{S_{2,j,\varepsilon}} \Vert_{L^2}^2 \right)^{1/2} \\
&\lesssim \varepsilon^{3/2} \Vert f_3 \Vert_{L^2(S_3(\varepsilon))} \left( \int_{\R^3 \times \R^3} \sum_j \chi_{S_{1,j,\varepsilon} \times S_{2,j,\varepsilon}}(\lambda_1,\lambda_2) |f_1(\lambda_1)|^2 |f_2(\lambda_2)|^2 d\lambda_1 d\lambda_2 \right)^{1/2} \\
&\lesssim \varepsilon^{3/2} A^{1/2} \prod_{i=1}^3 \Vert f_i \Vert_{L^2(S_i(\varepsilon))}.
\end{split}
\end{equation*}
Thus, the remainder of the proof is devoted to the proof of \eqref{eq:RelevantJ}.

Without loss of generality, we may assume that there exists $j_0 \in J_{3,\varepsilon}$ such that $(\lambda_1,\lambda_2) \in S_{1,j_0,\varepsilon} \times S_{2,j_0,\varepsilon}$. Suppose that $j \in J_{3,\varepsilon}$ satisfies $(\lambda_1,\lambda_2) \in S_{1,j,\varepsilon} \times S_{2,j,\varepsilon}$. We define $\lambda_{j_0} \in B_{\varepsilon,j_0}$ as the center of $B_{\varepsilon,j_0}$ and choose $\lambda_j \in B_{\varepsilon,j}$ arbitrarily. The assumption $\lambda_1 \in S_{1,j_0,\varepsilon} \cap S_{1,j,\varepsilon}$ implies that there exist $\lambda_{j_0}^\prime \in B_{\varepsilon,j_0}$ and $\lambda_j^\prime \in B_{\varepsilon,j}$ such that $(\lambda_{j_0}^\prime - \lambda_1) \in S_2(\varepsilon)$ and $(\lambda_j^\prime - \lambda_1) \in S_2(\varepsilon)$. Similarly, the assumption $\lambda_2 \in S_{2,j_0,\varepsilon} \cap S_{2,j,\varepsilon}$ yields $\tilde{\lambda}_{j_0} \in B_{\varepsilon,j_0}$ and $\tilde{\lambda_j} \in B_{\varepsilon,j}$ such that $(\tilde{\lambda}_{j_0} - \lambda_2) \in S_1(\varepsilon)$ and $(\tilde{\lambda}_j - \lambda_2) \in S_1(\varepsilon)$. We note that
\begin{align}
\label{eq:RelationLambdaI}
|(\lambda_{j_0}^\prime - \lambda_1) - (\lambda_j^\prime - \lambda_1) - (\lambda_{j_0} - \lambda_j)| &\leq 4 \varepsilon, \\
\label{eq:RelationLambdaII}
|(\tilde{\lambda}_{j_0} - \lambda_2) -(\tilde{\lambda}_j - \lambda_2) - (\lambda_{j_0} - \lambda_j)| &\leq 4 \varepsilon.
\end{align}
Now we define the new hypersurfaces $S_1^\prime = S_1^\prime(j_0,\lambda_2)$ and $S_2^\prime = S_2^\prime(j_0,\lambda_1)$ as
\begin{equation*}
S_1^\prime = S_1 - (\tilde{\lambda}_{j_0} - \lambda_2) + \lambda_{j_0}, \quad \quad S_2^\prime = S_2 - (\lambda_{j_0}^\prime - \lambda_1) + \lambda_{j_0}.
\end{equation*}
Since $(\tilde{\lambda}_{j_0} - \lambda_2) \in S_1(\varepsilon)$ and $(\lambda_{j_0}^\prime - \lambda_1) \in S_2(\varepsilon)$, it follows that $\lambda_{j_0} \in S_{1}^\prime (\varepsilon) \cap S_2^{\prime}(\varepsilon)$. In addition, we deduce from $(\tilde{\lambda}_j - \lambda_2) \in S_1(\varepsilon)$, \eqref{eq:RelationLambdaII} and $(\lambda_j^\prime - \lambda_1) \in S_2(\varepsilon)$, \eqref{eq:RelationLambdaI} that
\begin{equation*}
\text{dist}(\lambda_j,S_1^\prime) \leq 6 \varepsilon, \quad \quad \text{dist}(\lambda_j,S_2^\prime) \leq 6 \varepsilon.
\end{equation*}
Since $\lambda_j \in B_{\varepsilon,j}$ was chosen arbitrarily, the above display implies that if $j \in J_{3,\varepsilon}$ satisfies $(\lambda_1,\lambda_2) \in S_{1,j,\varepsilon} \times S_{2,j,\varepsilon}$, then it holds that
\begin{equation*}
B_{\varepsilon,j} \subseteq S_1^\prime(6\varepsilon) \cap S_2^\prime(6 \varepsilon) \cap S_3(6 \varepsilon).
\end{equation*}
Consequently, \eqref{eq:RelevantJ} follows from Proposition \ref{prop:IntersectionThickenedHypersurface} as
\begin{equation*}
\int_{\R^3} \chi_{S_1^\prime(6 \varepsilon)} \chi_{S_2^\prime(6 \varepsilon)} \chi_{S_3(6 \varepsilon)} \lesssim \varepsilon^3 A.
\end{equation*}
\end{proof}

\subsection{Functions on thickened hypersurfaces}

With the notations from above and $(f_i)_{i=1}^3 \subseteq C_c(\R^3)$ compactly supported functions that
\begin{equation}
\label{eq:ConvergenceThickenedHypersurfaces}
\frac{1}{(2 \varepsilon)^3} \int_{\R^3} (f_1 \vert_{S_1(\varepsilon)} * f_2 \vert_{S_2(\varepsilon)})(x) f_3 \vert_{S_3(\varepsilon)}(x) dx \rightarrow \int_{S_3} (f_1 \vert_{S_1} * f_2 \vert_{S_2})(x) f_3 \vert_{S_3}(x) d\sigma_3(x),
\end{equation}
where $\sigma_3$ denotes the surface measure on $S_3$. 
Since $\varepsilon^{-1/2} \| f_i \|_{L^2(S_i(\varepsilon))} \to \| f_i \|_{L^2(S_i)}$, the estimate \eqref{eq:ConvergenceThickenedHypersurfaces}, together with 
Theorem \ref{thm:GlobalNonlinearLoomisWhitneyInequality} immediately yields Theorem \ref{thm:GlobalNonlinearLoomisWhitneyInequalityR3}.

At several points, we make use of the coarea formula:
\begin{theorem}[Coarea formula]
Let $\Omega \subseteq \R^n$ be an open set and $u: \Omega \rightarrow \R^k$ a Lipschitz-continuous mapping, where $k \leq n$. Then, the following equality holds:
\begin{equation}
\label{eq:CoareaFormula}
\int_{\Omega} g(x) dx = \int_{\R^k} \int_{u^{-1}(t)} g(x) |J_k(x)| d\mathcal{H}^k(x) dt,
\end{equation}
where $d \mathcal{H}^k$ denotes $k$-dimensional Hausdorff measure and $J_k(x) = |det ((J u)^t Ju)|^{1/2}$ the $k$-Jacobian of $u$.
\end{theorem}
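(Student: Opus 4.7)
The coarea formula is a classical result from geometric measure theory; the standard proof (see e.g. Federer or Evans--Gariepy) proceeds by a three-stage reduction from Lipschitz maps to smooth maps to linear maps, and my plan is to follow this structure. First I would dispatch the linear case: if $u = L$ is a linear map of rank $k$, then via the singular value decomposition $L = U \Sigma V^t$, orthogonal changes of variables reduce the identity to Fubini's theorem on $\R^n = \R^k \times \R^{n-k}$, with the constant Jacobian $\sigma_1 \cdots \sigma_k$ matching $|\det(L^t L)|^{1/2}$ by the Cauchy--Binet formula.

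Next, I would pass to $C^1$ maps $u$ by a freezing argument: partition $\Omega$ into small cubes on each of which $Ju$ is nearly constant. On cubes where $Ju$ has maximal rank $k$, the implicit function theorem provides local coordinates in which $u$ becomes a projection, and comparison with the frozen linear approximation yields the formula with an error that vanishes as the partition is refined. On cubes where $Ju$ has rank less than $k$, both sides of the identity vanish: Sard's theorem guarantees that the image of the critical set has Lebesgue measure zero in $\R^k$, so the outer integral vanishes, while $J_k(x) = 0$ holds pointwise on the critical set, so the inner integrand also vanishes.

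Finally, I would handle the Lipschitz case via a Whitney--Lusin approximation. By Rademacher's theorem $u$ is differentiable almost everywhere on $\Omega$, and for every $\varepsilon > 0$ there is a $C^1$ map $v$ agreeing with $u$ outside a set $E_\varepsilon$ of measure at most $\varepsilon$. Applying the $C^1$ result to $v$ and bounding the contributions over $E_\varepsilon$ relies on the Eilenberg-type inequality
\[
\int_{\R^k} \mathcal{H}^{n-k}(u^{-1}(t) \cap E) \, dt \lesssim (\operatorname{Lip} u)^k \, |E|,
\]
which simultaneously controls the inner and outer integrals on the exceptional set. The chief obstacle lies precisely here, in quantitatively controlling how the $(n-k)$-dimensional Hausdorff measure of the level sets $u^{-1}(t)$ varies under the approximation; once this uniform control is established, letting $\varepsilon \to 0$ yields the coarea identity for the Lipschitz map $u$.
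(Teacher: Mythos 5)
The paper does not prove this theorem: it is stated as a classical fact of geometric measure theory (Federer, Evans--Gariepy) and used as a black box in the two convergence lemmas that follow it, so there is no paper proof to compare against. Taken on its own merits, your outline is the standard three-stage reduction (linear $\to$ $C^1$ $\to$ Lipschitz) and is correct in structure: an SVD/Fubini argument for linear maps, a freezing argument with Sard's theorem for $C^1$ maps, and a Whitney--Lusin $C^1$ approximation controlled by the Eilenberg inequality for the Lipschitz case. That is indeed the textbook proof.

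One caution, though, because the formula as displayed in the paper is not actually the coarea formula and your proof should target the correct identity rather than the typeset one. The true statement is
\begin{equation*}
\int_{\Omega} g(x)\, J_k u(x)\, dx = \int_{\R^k} \int_{u^{-1}(t)} g(x)\, d\mathcal{H}^{n-k}(x)\, dt,
\end{equation*}
where $Du(x)$ is the $k\times n$ Jacobian matrix and $J_k u(x) = \bigl(\det\bigl(Du(x)\,Du(x)^{\mathsf{T}}\bigr)\bigr)^{1/2}$. The display in the paper has the Jacobian weighting the right-hand side rather than the left (equivalently it should appear as $1/J_k$ there), uses $\mathcal{H}^k$ where $\mathcal{H}^{n-k}$ is required, and writes $(Ju)^{\mathsf{T}} Ju$ -- an $n\times n$ matrix that is singular whenever $k<n$ -- where $Ju\,(Ju)^{\mathsf{T}}$ is meant. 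The paper's own subsequent application (the weight $\sin^{-1}\alpha$, the measure $d\mathcal{H}^1$ with $n=3$, $k=2$) is consistent with the correct version, so these are transcription errors. Your sketch inherits the transposition slip -- $|\det(L^{\mathsf{T}}L)|^{1/2}$ should read $|\det(LL^{\mathsf{T}})|^{1/2}=\sigma_1\cdots\sigma_k$ -- and Cauchy--Binet is not needed: the identity falls out of the SVD directly. With those adjustments your proposed proof is a sound reconstruction of the standard argument.
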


We have the following lemma on the convolution on hypersurfaces:
\begin{lemma}
Let $0<\beta \leq 1$ and $S_1,S_2$ denote oriented transverse $C^{1,\beta}$-\-hy\-per\-sur\-faces and let $f_i \in C_c(\R^3)$, $i=1,2$. Then, the following holds true:
\begin{equation*}
\sup_{x \in \R^3} \left| \frac{1}{(2 \varepsilon)^2} f_1 \vert_{S_1(\varepsilon)} * f_2 \vert_{S_2(\varepsilon)} (x) - f_1 \vert_{S_1} * f_2 \vert_{S_2} (x) \right| \rightarrow 0 \text{ as } \varepsilon \to 0.
\end{equation*}
\end{lemma}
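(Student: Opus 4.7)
The plan is to express the thickened convolution as an average of 1-dimensional integrals via the coarea formula, identify the $\varepsilon \to 0$ limit, and upgrade pointwise convergence to uniform convergence using compact support.

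First, I would fix $x \in \R^3$ and set $\Phi_x(y) := (d_1(y), d_2(x-y))$, where $d_i$ denotes the signed distance to $S_i$. Then $y \in S_1(\varepsilon) \cap (x - S_2(\varepsilon))$ precisely when $\Phi_x(y) \in [-\varepsilon,\varepsilon]^2$, and a direct computation gives
\begin{equation*}
|J_2 \Phi_x(y)| = |\nabla d_1(y) \wedge \nabla d_2(x-y)|,
\end{equation*}
which, at the corresponding footpoints on $S_i$, equals $|\mathfrak{n}_1 \wedge \mathfrak{n}_2|$. By the transversality of $S_1, S_2$, the continuity of $\mathfrak{n}_i$, and the compactness of $\supp f_i$, this Jacobian is bounded below by some $c>0$ on the relevant region. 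Applying \eqref{eq:CoareaFormula} to $\Phi_x$ yields
\begin{equation*}
\frac{f_1|_{S_1(\varepsilon)} * f_2|_{S_2(\varepsilon)}(x)}{(2\varepsilon)^2} = \frac{1}{(2\varepsilon)^2} \int_{[-\varepsilon,\varepsilon]^2} I_x(t)\, dt, \quad I_x(t) := \int_{\Phi_x^{-1}(t)} \frac{f_1(y) f_2(x-y)}{|J_2\Phi_x(y)|}\, d\mathcal{H}^1(y).
\end{equation*}
Since $\Phi_x^{-1}(0) = \{y \in S_1 : x - y \in S_2\}$ and the weight $|J_2\Phi_x|^{-1}$ is exactly the density factor that appears when one expresses the surface convolution via coarea, one has $I_x(0) = (f_1|_{S_1} * f_2|_{S_2})(x)$.

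Pointwise convergence then follows from Lebesgue differentiation, provided $t \mapsto I_x(t)$ is continuous at $0$. For the uniform statement, observe that both terms in the lemma vanish outside the compact set $K := \supp f_1 + \supp f_2$, so it suffices to prove equicontinuity of $t \mapsto I_x(t)$ at $t = 0$, uniformly in $x \in K$. This reduces to three ingredients: uniform continuity of $f_1, f_2$; the uniform transversality bound $|J_2\Phi_x| \geq c$; and uniform (in $(x,t)$) control of the level curves $\Phi_x^{-1}(t)$. The main obstacle is the last point, which requires a quantitative implicit function argument applied to the $C^{1,\beta}$ submersion $\Phi_x$, using the H\"older continuity of the $\mathfrak{n}_i$ to show that $\Phi_x^{-1}(t)$ moves continuously (in, say, Hausdorff distance) as $(x,t)$ varies over $K \times [-\delta,\delta]^2$. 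Once this uniform parametrization is in hand, averaging $I_x(t)$ over $[-\varepsilon,\varepsilon]^2$ produces the claimed uniform convergence.
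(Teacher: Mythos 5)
Your argument is correct and matches the paper's approach: both apply the coarea formula to a two-component defining-function submersion, realize the thickened convolution as an average of $1$-dimensional line integrals over level curves, identify the zero fiber as the surface convolution (with the same Jacobian density, $\sin^{-1}\alpha = |\mathfrak{n}_1 \wedge \mathfrak{n}_2|^{-1}$), and pass to the limit using the implicit function theorem together with continuity/compactness. The only differences are cosmetic: you use the signed distance functions where the paper uses the graph-defining functions, and Lebesgue differentiation of the average where the paper invokes the mean value theorem for integrals.
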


\begin{proof}
Let $F_1$ parametrize $S_1$ and $F_2^x$ parametrize $x-S_2$. In the first step, we use the coarea formula with $u= (F_1, F_2^x)$ to decompose $S_1(\varepsilon)$ and $S_2(\varepsilon)$ into hypersurfaces:
\begin{equation*}
\begin{split}
&\quad f_1 \vert_{S_1(\varepsilon)} * f_2 \vert_{S_2(\varepsilon)}(x) \\
 &= \int_{\substack{ y \in S_1(\varepsilon), \\ x-y \in S_2(\varepsilon)}} f_1(x-y) f_2(y) dy \\
&= \int_{-\varepsilon}^\varepsilon \int_{-\varepsilon}^\varepsilon \int_{\substack{x- y \in S_2^{\varepsilon_2},\\ y \in S_1^{\varepsilon_1}}} f_1(x-y) f_2(y) \sin^{-1}(\alpha(y,x-y)) d\mathcal{H}^1(y) d\varepsilon_1 d\varepsilon_2.
\end{split}
\end{equation*}
Here, $\alpha(y,x-y)$ denotes the angle between $\mathfrak{n}_1(y)$ and $\mathfrak{n}_2(x-y)$ and $\mathcal{H}^1$ the one-dimensional Hausdorff measure. We parametrize $S_1^{\varepsilon_2} \cap (x- S_2^{\varepsilon_1})$ by $\gamma_x^{\varepsilon_1,\varepsilon_2}: (0,1) = I \rightarrow S_2^{\varepsilon_2} \cap (x- S_1^{\varepsilon_1})$ by virtue of the implicit function theorem.\\
Note that $|S_1^{\varepsilon_2} \cap (x- S_2^{\varepsilon_1})|$ depends continuously on $\varepsilon_1, \varepsilon_2$ and $x$. Moreover, it is enough to consider $S_1 \cap (x-S_2) \neq \emptyset$. For these points, the implicit function theorem gives that $\gamma_x^{\varepsilon_1,\varepsilon_2}$ depends jointly continuously differentiable on $x$, $\varepsilon_1$ and $\varepsilon_2$.

This gives by the mean value theorem
\begin{equation*}
\begin{split}
&\quad f_1 \vert_{S_1(\varepsilon)} * f_2 \vert_{S_2(\varepsilon)}(x) \\
 &= \int_{-\varepsilon}^\varepsilon \int_{-\varepsilon}^\varepsilon \int_{\substack{ y \in S_2^{\varepsilon_2}, \\ x-y \in S_1^{\varepsilon_1}}} f_1(x-y) f_2(y) \sin^{-1}(\alpha(x-y,y)) d\mathcal{H}^1(y) d\varepsilon_1 d\varepsilon_2 \\
&= \int_{-\varepsilon}^\varepsilon \int_{-\varepsilon}^\varepsilon \int_I f_1(x-\gamma_x^{\varepsilon_1,\varepsilon_2}(t)) f_2(\gamma_x^{\varepsilon_1,\varepsilon_2}(t)) \sin^{-1}(\alpha) | \dot{\gamma}_x^{\varepsilon_1,\varepsilon_2}(t)| dt d\varepsilon_1 d\varepsilon_2 \\
&= (2\varepsilon)^2 \int_I f_1(x-\gamma_x^{\varepsilon_1^\prime,\varepsilon_2^\prime}(t)) f_2(\gamma_x^{\varepsilon_1,\varepsilon_2}(t)) \sin^{-1}(\alpha) |\dot{\gamma}_x^{\varepsilon_1^\prime, \varepsilon_2^\prime}(t)| dt
\end{split}
\end{equation*}
for $\varepsilon_1^\prime, \varepsilon_2^\prime \in [-\varepsilon,\varepsilon]$. The proof is complete.
\end{proof}

Furthermore, we have the following lemma:
\begin{lemma}
Let $0< \beta \leq 1$, $S_3$ be a $C^{1,\beta}$-hypersurface and $f_3 \in C_c(\R^3)$. Let $(g_\varepsilon)_{\varepsilon \in (-\varepsilon^\prime,\varepsilon^\prime)} \subseteq C_c(\R^3)$ denote a family of continuous functions with $g_\varepsilon \rightarrow g \in C_c(\R^3)$ as $\varepsilon \to 0$. Then, we find the following estimate to hold:
\begin{equation*}
\left| \frac{1}{2 \varepsilon} \int_{\R^3} g_\varepsilon(x) f_3 \vert_{S_3(\varepsilon)}(x) dx - \int_{S_3} g(x) f_3 \vert_{S_3}(x) d \sigma_3(x) \right| \to 0.
\end{equation*}
\end{lemma}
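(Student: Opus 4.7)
The plan is to parametrize $S_3(\varepsilon)$ explicitly using the graph representation from Assumption \ref{AssumptionSurfaces}(i) and reduce the claim to a one-dimensional slice-average limit. Writing $S_3 = G_3 \cdot gr(F_3)$ with $G_3 \in O(3)$ and $F_3 \in C^{1,\beta}(\mathcal{U}_3)$, the thickened set $S_3(\varepsilon)$ is the image under $G_3$ of $\{(\xi, \eta, F_3(\xi, \eta) + t) : (\xi, \eta) \in \mathcal{U}_3,\, |t| < \varepsilon\}$. Setting $\Phi(\xi, \eta, t) := G_3(\xi, \eta, F_3(\xi, \eta) + t)$, the map $\Phi$ is the composition of a shear in the third coordinate with an orthogonal transformation, hence has unit Jacobian, and Fubini yields
\begin{equation*}
\frac{1}{2\varepsilon} \int_{\R^3} g_\varepsilon(x) f_3\vert_{S_3(\varepsilon)}(x)\, dx = \int_{\mathcal{U}_3} \frac{1}{2\varepsilon} \int_{-\varepsilon}^{\varepsilon} (g_\varepsilon f_3)(\Phi(\xi, \eta, t)) \, dt \, d\xi \, d\eta.
\end{equation*}

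Next I would pass to the limit inside the outer integral. The hypothesis $g_\varepsilon \to g$ in $C_c(\R^3)$ supplies $\Vert g_\varepsilon - g \Vert_\infty \to 0$ together with a common compact support, so replacing $g_\varepsilon$ by $g$ in the integrand introduces an error of size $O(\Vert g_\varepsilon - g \Vert_\infty \Vert f_3 \Vert_\infty)$, vanishing uniformly in $(\xi,\eta,t)$. For the residual integrand $(g f_3) \circ \Phi$, uniform continuity of $gf_3$ on $\R^3$ yields a modulus of continuity $\omega$ with
\begin{equation*}
\left| \frac{1}{2\varepsilon} \int_{-\varepsilon}^{\varepsilon} (gf_3)(\Phi(\xi,\eta,t))\, dt - (gf_3)(\Phi(\xi,\eta,0)) \right| \leq \omega(\varepsilon),
\end{equation*}
uniformly in $(\xi, \eta)$. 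Since the effective domain in $(\xi,\eta)$ is contained in the bounded projection of $\supp(g)\cup\supp(f_3)$ under $G_3^{-1}$ onto the first two coordinates, bounded convergence then delivers convergence of the double integral to $\int_{\mathcal{U}_3} (gf_3)(\Phi(\xi, \eta, 0))\, d\xi\, d\eta$.

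It remains to identify this with $\int_{S_3} gf_3\, d\sigma_3$. Under the convention implicit in \eqref{eq:ConvergenceThickenedHypersurfaces}, and consistent with the normalization $\varepsilon^{-1/2} \Vert f_i \Vert_{L^2(S_i(\varepsilon))} \to \Vert f_i \Vert_{L^2(S_i)}$ quoted just after it, the surface measure $d\sigma_3$ is the pushforward of planar Lebesgue measure under $(\xi,\eta) \mapsto \Phi(\xi,\eta,0)$; with this identification the two expressions coincide. There is no real analytic obstacle here---the argument is essentially a Lebesgue-differentiation-in-the-normal-variable computation---the $C^{1,\beta}$ regularity is used only to ensure that the parametrization $\Phi$ is well-defined and continuous, and the main bookkeeping point is matching the parametrization-induced measure to the convention used for $d\sigma_3$ on the right-hand side.
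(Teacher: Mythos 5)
Your proof is correct and takes essentially the same route as the paper: both decompose the slab $S_3(\varepsilon)$ as a family of parallel copies of $S_3$ indexed by the normal parameter and pass to the limit in that variable, the paper doing so via the coarea formula and a mean-value-theorem selection of a representative slice $\varepsilon_3 \in [-\varepsilon,\varepsilon]$, you by the unit-Jacobian parametrization $\Phi$, Fubini, uniform continuity, and bounded convergence. Your explicit bookkeeping of the Jacobian and the resulting measure convention for $d\sigma_3$ is in fact cleaner than the paper's, which drops the coarea Jacobian $|\nabla u| = \sqrt{1+|\nabla F_3|^2}$ in the first display and then reinserts the same factor as the surface area element $\sqrt{\det((J\psi_{\varepsilon_3})^{\tau} J\psi_{\varepsilon_3})}$ in the last; your version makes transparent that these two factors cancel, yielding the planar-pushforward normalization you identify.
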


\begin{proof}
We use the coarea formula as in the proof of the previous lemma to write
\begin{equation*}
\frac{1}{2 \varepsilon} \int_{S_3(\varepsilon)} g_{\varepsilon}(x) f_3(x) dx = \frac{1}{2 \varepsilon} \int_{-\varepsilon}^\varepsilon \int_{S_3^{\varepsilon^\prime}} g_\varepsilon(x) f_3(x) d\mathcal{H}^{2}(x) d\varepsilon^\prime,
\end{equation*}
where $S_3(\varepsilon) = \bigcup_{\varepsilon^\prime \in [-\varepsilon,\varepsilon]} S_3^{\varepsilon^\prime}$.

By continuity of the integral in $\varepsilon^\prime$, we can write by the mean value theorem
\begin{equation*}
\frac{1}{2 \varepsilon} \int_{S_3(\varepsilon)} g_{\varepsilon}(x) f_3(x) dx = \int_{S_3^{\varepsilon_3}} g_\varepsilon(x) f_3(x) d\sigma_3^{\varepsilon_3}(x) \text{ for } \varepsilon_3 \in [-\varepsilon,\varepsilon].
\end{equation*}
Next, we choose parametrizations of $S_3^{\varepsilon_3}$ to write
\begin{align*}
\int_{S_3^{\varepsilon_3}} g_\varepsilon(x) f_3(x) d\sigma_3^{\varepsilon_3}(x) &= \int_{\R^2} g_\varepsilon(\psi_{\varepsilon_3}(x)) f_3(\psi_{\varepsilon_3}(x)) \sqrt{\det((J \psi_{\varepsilon_3})^{\tau} (J \psi_{\varepsilon_3})} dx
\end{align*}
with $J \psi_{\varepsilon} $ independent of $\varepsilon$, which is possible as varying $\varepsilon$ in $S^\varepsilon_3$ only amounts to a linear shift. The proof is complete.
\end{proof}

Taking the above two lemmas together finishes the proof of \eqref{eq:ConvergenceThickenedHypersurfaces}.
\vspace{0.5cm}

We highlight that versions for thickened hypersurfaces like provided by Theorem \ref{thm:GlobalNonlinearLoomisWhitneyInequality} are more natural for applications in the context of dispersive equations, see Section \ref{section:ShorttimeEnergyEstimates}, than the counterparts for actual hypersurfaces. We give another example of relevance for applications to dispersive equations, which shows that it is not enough to require the transversality at vectors respecting the convolution structure. This partially answers \textit{Question 2.2 (1)} from the work \cite{KochSteinerberger2015} by Koch-Steinerberger negatively:

\begin{proposition}
There exist $C^2$-hypersurfaces $S_i \subseteq \R^3$, $i=1,2,3$, which satisfy
\begin{equation}
\label{eq:RestrictedTransversality}
\inf_{\substack{ \lambda_i \in S_i, \\ \lambda_1 + \lambda_2 = \lambda_3}} | \det(\mathfrak{n}_1(\lambda_1),\mathfrak{n}_2(\lambda_2),\mathfrak{n}_3(\lambda_3))| \geq 1/2,
\end{equation}
and for any $C>0$, there exist $f_i \in L^2(S_i)$, $i=1,2$, such that
\begin{equation*}
\Vert f_1 * f_2 \Vert_{L^2(S_3)} \geq C \Vert f_1 \Vert_{L^2(S_1)} \Vert f_2 \Vert_{L^2(S_2)}.
\end{equation*}
\end{proposition}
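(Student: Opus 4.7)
The plan is to construct a counterexample: three $C^2$-hypersurfaces $S_1,S_2,S_3 \subseteq \R^3$ satisfying the restricted transversality \eqref{eq:RestrictedTransversality} together with a sequence of test functions witnessing the failure of the nonlinear Loomis-Whitney inequality. The strategy is motivated by the multiplicity estimate \eqref{eq:RelevantJ} in the proof of Theorem \ref{thm:GlobalNonlinearLoomisWhitneyInequality}, where the full transversality Assumption \ref{AssumptionSurfaces}(iii) is used at \emph{all} triples $(\sigma_1,\sigma_2,\sigma_3) \in S_1 \times S_2 \times S_3$ — not only those with $\sigma_1+\sigma_2=\sigma_3$ — so the example should be one where this multiplicity bound fails under mere restricted transversality.

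Concretely, I would choose $S_1$ and $S_2$ to be pieces of a surface which is cylindrical along one coordinate direction, say $x_2$, so that the normals $\mathfrak{n}_1, \mathfrak{n}_2$ lie in the $x_1 x_3$-plane and $\mathfrak{n}_1 \times \mathfrak{n}_2$ is parallel to $e_2$. For $S_3$ I would take an open subset of a curved hypersurface with $\mathfrak{n}_3 \cdot e_2$ bounded below by a positive constant, restricted to the set of $\lambda_3$ for which the convolution-respecting triples $(\lambda_1,\lambda_2,\lambda_3)$ with $\lambda_1+\lambda_2=\lambda_3$ lie in the sub-region of $S_1 \times S_2$ where $\mathfrak{n}_1, \mathfrak{n}_2$ are transverse enough to guarantee $|\det(\mathfrak{n}_1,\mathfrak{n}_2,\mathfrak{n}_3)| \geq 1/2$. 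Pairs $(\lambda_1,\lambda_2) \in S_1 \times S_2$ with nearly parallel normals — for which full transversality fails — are thus rendered harmless for restricted transversality by being excluded from the convolution locus of $S_3$.

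For the test functions, I would take $f_1^{(R)},f_2^{(R)}$ supported on long tubes along the $x_2$-direction in the tangential (nearly parallel) region of $S_1,S_2$, combined with compactly supported profiles in the $x_1 x_3$-plane targeting the curved piece. The translation invariance in $x_2$ forces $(f_1^{(R)} * f_2^{(R)})(\lambda_3) \sim R$ pointwise on the relevant region, while $\| f_i^{(R)} \|_{L^2(S_i)} \sim R^{1/2}$. I would then verify that the pull-back of this convolution mass onto $S_3$, via the curved piece of $S_1,S_2$, gives $\| f_1^{(R)} * f_2^{(R)} \|_{L^2(S_3)}$ of order $R^{1 + \alpha}$ for some $\alpha>0$, producing the desired failure as $R \to \infty$.

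The main obstacle is the delicate balance in the last step: since the classical Loomis-Whitney inequality is already sharp for transverse hyperplanes and for compact transverse hypersurfaces, a naive cylinder-based construction yields a ratio bounded by a constant. To produce unbounded blow-up, one must exploit the failure of full transversality at non-convolution-respecting triples — which feeds into an unbounded multiplicity in \eqref{eq:RelevantJ} — without letting this failure propagate to the convolution locus intersected with $S_3$. Engineering this requires a careful geometric coupling of the tangential region of $S_1,S_2$ with the curved piece of $S_3$, and the main technical content of the proof will be the explicit computation of both $\| f_1^{(R)} * f_2^{(R)} \|_{L^2(S_3)}$ and the product $\| f_1^{(R)} \|_{L^2(S_1)} \| f_2^{(R)} \|_{L^2(S_2)}$ for the chosen configuration.
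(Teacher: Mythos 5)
Your high-level diagnosis is correct — the failure in \eqref{eq:RelevantJ} is indeed what one must exploit, since there the full transversality Assumption~\ref{AssumptionSurfaces}(iii) at non-convolution-respecting triples is essential — but the concrete mechanism you propose is not carried out, and it differs from the paper's in ways that leave a genuine gap.

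The paper's construction is considerably simpler and more rigid than what you sketch. Its $S_1,S_2$ are \emph{flat} pieces of coordinate hyperplanes, with constant normals $e_3$ and $e_2$; it is $S_3$ that is curved, namely the sine wave $z=\sin(\pi x)+c_3$, whose normal $\mathfrak{n}_3$ becomes orthogonal to $\mathfrak{n}_1\times\mathfrak{n}_2\parallel e_1$ at $x\in 1/2+\Z$. Thus full transversality fails not because $\mathfrak{n}_1$ and $\mathfrak{n}_2$ degenerate (they never do), but because $\mathfrak{n}_3$ rotates into the span of $\mathfrak{n}_1,\mathfrak{n}_2$. The restricted transversality is rescued because the convolution constraint $\lambda_1+\lambda_2=\lambda_3\in S_3$ forces $|\sin(\pi(x_1+x_2))|\ll 1$, hence $|\cos(\pi(x_1+x_2))|\approx 1$, hence $|\det N|>1/2$. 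Your narrative (``pairs $(\lambda_1,\lambda_2)$ with nearly parallel normals... rendered harmless by being excluded from the convolution locus'') describes the opposite situation; whether it can be engineered with cylindrical $S_1,S_2$ and $\mathfrak{n}_3\cdot e_2$ bounded below is far from clear and would require controlling a substantially more intricate coupled constraint.

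More importantly, the counterexample functions in the paper are not tubes: they are indicator functions of $\mathcal{T}_R=\bigcup_{|k|\le R}B((k,0,0),2^{-10})$, a union of $\sim R$ separated balls at integer lattice points on the $x$-axis. The \emph{arithmetic} structure is essential: each ball sits in all three thickened surfaces because $\sin(\pi k)=0$, and the count of pairs $j_1+j_2=k$ with $|j_i|\le R$ gives a superlinear trilinear integral $\sim R^2$ against $\|f_i\|_{L^2}\sim R^{1/2}$, hence $R^2\gg R^{3/2}$. This is precisely the unbounded overlap in \eqref{eq:RelevantJ} produced by the periodicity of $S_3$. Your tube construction relies instead on 1D convolution of two length-$R$ intervals; as you yourself note (``a naive cylinder-based construction yields a ratio bounded by a constant''), it is not at all automatic that the $L^2(S_3)$ norm grows faster than the product of the norms, and the step where you would ``verify that the pull-back of this convolution mass onto $S_3$... gives... $R^{1+\alpha}$'' is exactly where the proof is missing. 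Without the lattice/periodicity interaction, there is no mechanism visible in your sketch to force the convolution mass to accumulate on $S_3$ rather than spread over the Minkowski sum. In short: the overall idea is right, but the construction is not specified, the key estimate is unproven and flagged as an obstacle, and the geometry you envision diverges from the one that actually makes the argument go through.
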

\begin{proof}
For the sake of contradiction, suppose that for all $C^2$-hypersurfaces satisfying \eqref{eq:RestrictedTransversality}, there exists $C= C(S_1,S_2,S_3)>0$ such that
\begin{equation}
\label{eq:RestrictedLoomisWhitneyInequality}
\Vert f_1 * f_2 \Vert_{L^2(S_3)} \leq C \Vert f_1 \Vert_{L^2(S_1)} \Vert f_2 \Vert_{L^2(S_2)}.
\end{equation}
Let $-2^{-5} < c_i < 2^{-5}$ and define three families of hypersurfaces $S_i^{c_i} \subseteq \R^3$ as follows:
\begin{align*}
S_1^{c_1} &= \{ (x,y,z) \in \R^3 \, | \, | y| < 2^{-5}, \, z=c_1 \}, \\
S_2^{c_2} &= \{ (x,y,z) \in \R^3 \, | \, y=c_2, \, |z| < 2^{-5} \}, \\
S_3^{c_3} &= \{ (x,y,z) \in \R^3 \, | \, z=\sin(\pi x) + c_3 \}.
\end{align*}
Since $|c_i| < 2^{-5}$, it is straight-forward to check that
\begin{equation*}
\inf_{\substack{0 \leq |c_i| < 2^{-5}, \\ i = 1,2,3}} \inf_{\substack{\lambda_i \in S_i^{c_i},\\ \lambda_1 + \lambda_2 = \lambda_3 }} |\det(\mathfrak{n}_1(\lambda_1),\mathfrak{n}_2(\lambda_2),\mathfrak{n}_3(\lambda_3))| \geq 1/2 .
\end{equation*}
By \eqref{eq:RestrictedLoomisWhitneyInequality}, for any $0 \leq |c_i| < 2^{-5}$, $i=1,2,3$, we get
\begin{equation*}
\Vert f_1 * f_2 \Vert_{L^2(S_3^{c_3})} \lesssim \Vert f_1 \Vert_{L^2(S_1^{c_1})} \Vert f_2 \Vert_{L^2(S_2^{c_2})}.
\end{equation*}
Setting $S_i(2^{-5}) = \bigcup_{0 \leq |c_i| < 2^{-5}} S_i^{c_i} \subseteq \R^3$, this gives
\begin{equation}
\label{eq:ThickenedRestrictedLoomisWhitneyInequality}
\left| \int_{\R^3} ( f_1 \vert_{S_1(2^{-5})} * f_2 \vert_{S_2(2^{-5})} )(x) f_3 \vert_{S_3(2^{-5})} (x) dx \right| \lesssim \prod_{i=1}^3 \Vert f_i \Vert_{L^2(S_i(2^{-5}))}.
\end{equation}
For $R \gg 1$, consider
\begin{equation*}
\mathcal{T}_R = \bigcup_{\substack{|k| \leq R,\\ k \in \Z}} B((k,0,0),2^{-10}) \subseteq S_1(2^{-5}) \cap S_2(2^{-5}) \cap S_3(2^{-5}).
\end{equation*}
Set $f_1 = f_2 = f_3 = \chi_{\mathcal{T}_R}$. Then, $\Vert f_i \Vert_{L^2} \sim R^{1/2}$, and
\begin{equation*}
\left| \int_{\R^3} (f_1 \vert_{S_1(2^{-5})} * f_2 \vert_{S_2(2^{-5})} )(x) f_3 \vert_{S_3(2^{-5})}(x) dx \right| \sim R^2,
\end{equation*}
which contradicts \eqref{eq:ThickenedRestrictedLoomisWhitneyInequality}. The proof is complete.
\end{proof}

\subsection{Loomis-Whitney-type inequalities on $\R \times \Z^2/N$}
The previous considerations allow us to prove a version of the nonlinear Loomis-Whitney inequality on $\R \times$lattices under scalable assumptions:
\begin{proposition}
\label{prop:PeriodicNLW}
Let $1 \leq A \ll N$ be dyadic and $f_i : \R \times \Z^2/N \rightarrow \R$. 
For $i=1,2,3$, let $S_i = \{ (\psi_i(\xi),\xi) \; | \; \xi \in \R^2 \}$ be hypersurfaces with $C^{1,1}$-functions $\psi_i$ on $\R^2$. Suppose that $\| \nabla \psi_i\|_{L^\infty} \lesssim 1$, and that the $S_i,\;i=1,2,3$ obey \eqref{eq:HoelderConditionUnitNormals} with $\beta = 1$ and \eqref{eq:TransversalityAssumption}.\\
Suppose that
\begin{equation*}
\text{supp}(f_i) \subseteq S_i(L_i), \ S_i(L_i) = \{ (\tau,k) = (\tau,k_1,k_2) \in \R \times \Z^2/N \; | \; |\tau - \psi_i(k)| \leq L_i \}.
\end{equation*}
Then, we find the following estimate to hold:
\begin{equation}
\label{eq:InequalityOnLattices}
N^4 \left| \int_{\R \times (\Z^2 / N)} (f_1 * f_2) f_3 d\tau (dk)_N \right| \lesssim C(A,N,L_1,L_2,L_3) \prod_{i=1}^3 (N \Vert f_i \Vert_{L_\tau^2 L^2_{(dk)_N}} ),
\end{equation}
where\footnote{Note that the convolution on $\Z^2/N$ also carries the renormalized counting measure.}
\begin{equation*}
C(A,N,L_1,L_2,L_3) = L_{\min}^{1/2} \langle N L_{\mathrm{med}} \rangle^{1/2} \langle AN L_{\max} \rangle^{1/2}.
\end{equation*}
\end{proposition}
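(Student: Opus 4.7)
The plan is to reduce to the Euclidean Theorem~\ref{thm:GlobalNonlinearLoomisWhitneyInequality} via a lifting of $f_i$ to $\R^3$, supplemented by Cauchy-Schwarz refinements in the $\tau$-direction when some thicknesses fall below the lattice spacing $1/N$. Without loss of generality I order $L_1 \leq L_2 \leq L_3$.

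First, I promote Theorem~\ref{thm:GlobalNonlinearLoomisWhitneyInequality} from single to multiple thicknesses: for $M_1 \leq M_2 \leq M_3$ and surfaces $(S_i)$ satisfying Assumption~\ref{AssumptionSurfaces},
\[
\int_{\R^3}(g_1 * g_2) g_3 \, d\lambda \lesssim A^{1/2} (M_1 M_2 M_3)^{1/2} \prod_{i=1}^3 \|g_i\|_{L^2(S_i(M_i))}.
\]
This follows from Theorem~\ref{thm:GlobalNonlinearLoomisWhitneyInequality} by decomposing $S_2(M_2)$ and $S_3(M_3)$ into $M_j/M_1$ parallel slabs of thickness $M_1$, applying Theorem~\ref{thm:GlobalNonlinearLoomisWhitneyInequality} with $\varepsilon = M_1$ to each triple of slabs, and summing over slab indices via Cauchy-Schwarz.

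Next, I lift each $f_i$ to $\tilde f_i : \R^3 \to \R$ by setting $\tilde f_i(\tau, \xi) := N f_i(\tau, k)$ for $\xi \in Q_N(k) := k + [-(2N)^{-1}, (2N)^{-1}]^2$. This gives $\|\tilde f_i\|_{L^2(\R^3)} = N \|f_i\|_{L^2(\R \times \Z^2/N)}$, and the Lipschitz bound on $\psi_i$ forces $\supp(\tilde f_i) \subseteq S_i(L_i + C/N)$. Expanding both convolutions directly yields the identification
\[
N^4 \int_{\R \times \Z^2/N} (f_1 * f_2) f_3 \, d\tau (dk)_N \sim N \int_{\R^3} (\tilde f_1 * \tilde f_2) \tilde f_3 \, d\lambda.
\]
Applying the multi-thickness inequality with $M_i = L_i + C/N$ already settles the case where all $L_i \gtrsim 1/N$: then $M_i \sim L_i$ and $A^{1/2} N (L_1 L_2 L_3)^{1/2}$ rearranges precisely to $L_{\min}^{1/2} \langle N L_{\mathrm{med}} \rangle^{1/2} \langle A N L_{\max} \rangle^{1/2}$.

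When $L_i < 1/N$ for some $i$, the naive lift inflates the effective thickness to $1/N$ and loses a factor $(N L_i)^{1/2}$ relative to the target. To recover the sharp $L_{\min}^{1/2}$ factor, I exploit that for each fixed $k_1$ the $\tau$-support of $f_1(\cdot, k_1)$ has length at most $2 L_1$; Cauchy-Schwarz in $\tau_1$ then produces an $L_1^{1/2}$ factor that can be pulled out before the Loomis-Whitney step is invoked. An analogous refinement in $\tau_2$ handles the case $N L_2 < 1$ and generates the saturation at $\langle N L_{\mathrm{med}} \rangle^{1/2}$. The main obstacle will be to combine these regime-dependent refinements seamlessly with the Loomis-Whitney estimate so that the transversality factor $\langle A N L_{\max} \rangle^{1/2}$ emerges uniformly from the intersection estimate---the lattice counterpart of Proposition~\ref{prop:IntersectionThickenedHypersurface}---rather than via lossy case-by-case analysis.
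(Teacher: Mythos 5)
Your reduction of the regime $L_{\mathrm{med}} \gtrsim N^{-1}$ to Theorem~\ref{thm:GlobalNonlinearLoomisWhitneyInequality} via the piecewise-constant lifting $\tilde f_i(\tau,\xi) = N f_i(\tau,k)$ for $\xi \in Q_N(k)$ is reasonable (for nonnegative $f_i$ the diagonal contribution to the continuum convolution dominates the lattice convolution up to constants), and the arithmetic $NA^{1/2}(L_1 L_2 L_3)^{1/2} = L_{\min}^{1/2}\langle NL_{\mathrm{med}}\rangle^{1/2}\langle ANL_{\max}\rangle^{1/2}$ indeed recovers the target constant in that range. This mirrors the easy case in the paper's own proof, where $1/N$-lattice cells are related to the $\varepsilon$-balls from Theorem~\ref{thm:GlobalNonlinearLoomisWhitneyInequality}. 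Note however that the paper restricts even this easy reduction to $N^{-1}\ll A^{-2}$: for $A\lesssim N\lesssim A^2$ the tangent-plane approximation underlying Proposition~\ref{prop:IntersectionThickenedHypersurface} is no longer valid at scale $\varepsilon\sim N^{-1}$, so the straightforward lift already needs justification there.

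The genuine gap is the regime $L_{\mathrm{med}}\leq N^{-1}$, which you flag as ``the main obstacle'' and then leave unresolved --- and it is precisely the content of the proposition. After Cauchy--Schwarz in $\tau_1$ extracts $L_{\min}^{1/2}$, what remains is a purely lattice trilinear form on $\ell^2(\Z^2/N)$; there is no continuum thickness left with which to re-invoke Theorem~\ref{thm:GlobalNonlinearLoomisWhitneyInequality}. The step you cannot skip is the lattice almost-orthogonality count
\[
\sum_{k_3\in\Z^2/N}\chi_{S^1_{k_3,L_2}\times S^2_{k_3,L_2}}(k_1,k_2)\lesssim \langle ANL_2\rangle ,
\]
bounding, for fixed $(k_1,k_2)$, the number of $k_3$ for which the resonance constraint and the three $\tau$-supports are simultaneously compatible. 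This is a genuinely lattice-geometric statement and cannot be deduced from the Euclidean theorem: as the example in Subsection~\ref{subsection:ExamplesNLW} shows, a fully transverse triple of lattice frequencies can have vanishing resonance, so the continuum volume heuristic predicts a vanishing factor while the lattice count saturates at $O(1)$. The paper establishes the count by a contradiction argument that translates the hypersurfaces, compares normal vectors at well-separated lattice points, and controls the cross-product geometry on $\mathbb{S}^2$; none of this is supplied in your proposal, and without it the transversality factor $\langle ANL_{\max}\rangle^{1/2}$ cannot be obtained in the small-$L_{\mathrm{med}}$ range.
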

We point out how in the limiting cases $N \to \infty$ or $L_{\mathrm{med}} \to \infty$ Proposition \ref{prop:PeriodicNLW} recovers \eqref{eq:Theorem4.3} in Theorem \ref{thm:GlobalNonlinearLoomisWhitneyInequality}.

\begin{proof}
The claim is that \eqref{eq:InequalityOnLattices} holds with
\begin{equation}
\label{ConstantC}
C(A,N,L_1,L_2,L_3) = 
\begin{cases}
L_{\min}^{1/2} \langle A N L_{\max} \rangle^{1/2}, \quad & L_{\mathrm{med}} \leq N^{-1}, \\
(A L_1 L_2 L_3)^{1/2} N, \quad & N^{-1} \leq L_{\mathrm{med}}.
\end{cases}
\end{equation}
Without loss of generality, we can assume $L_1 \leq L_2 \leq L_3$. If $L_{\max} \geq \frac{1}{AN}$, by decomposing $S_3(L_3)$ into $L_3/L_2$ translated $L_2$-thickened $S_3$, we can also assume that $L_2 = L_3$. Furthermore, if $L_2 \geq N^{-1}$, we decompose $S_2(L_2)$ and $S_3(L_2)$ into $NL_2$ translated $N^{-1}$-thickened hypersurfaces $S_2$ and $S_3$, respectively. If $L_{\max} \leq \frac{1}{AN}$, we do not decompose.

It suffices to show
\begin{equation*}
N^4 \left| \int_{\R \times (\mathbb{Z}/N)^2} (f_1 * f_2) f_3 d\tau (dk)_N \right| \lesssim L_1^{1/2} \langle AN L_2 \rangle^{1/2} \prod_{i=1}^3 (N \Vert f_i \Vert_{L_\tau^2 L^2_{(dk)_N}} ), 
\end{equation*}
for $L_2 = L_3$ and $L_2 \leq N^{-1}$. The support of spatial frequencies for $f$ will be denoted by $\text{supp}_k(f)$. Suppose that $k_3 \in \text{supp}_k(f_3)$ is fixed and define
\begin{align*}
\Phi_1(k_1,\tau_1,k_3,\tau_3) &= |\tau_1 - \psi_1(k_1)| + |\tau_3 - \tau_1 - \psi_2(k_3-k_1)| + |\tau_3 - \psi_3(k_3)|, \\
\Phi_2(k_2,\tau_2,k_3,\tau_3) &= |\tau_2-\psi_2(k_2)| + |\tau_3-\tau_2 - \psi_1(k_3-k_2)| + |\tau_3 - \psi_3(k_3)|, \\
S^1_{k_3,L_2} &= \{ k_1 \in \text{supp}_k f_1 \; | \; k_3 - k_1 \in \text{supp}_k f_2, \\
&\quad \quad \exists \tau_1, \tau_3 \in \R: \Phi_1(k_1,\tau_1,k_3,\tau_3) \lesssim L_2 \}, \\
S^2_{k_3,L_2} &= \{ k_2 \in \text{supp}_k f_2 \; | \; k_3 - k_2 \in \text{supp}_k f_1, \\
&\quad \quad \exists \tau_2, \tau_3 \in \R: \Phi_2(k_2,\tau_2,k_3,\tau_3) \lesssim L_2 \}.
\end{align*}
Note that $k_3 - S^1_{k_3,L_2} = S^2_{k_3,L_2}$. For all fixed $(k_1,k_2) \in \text{supp}_k f_1 \times \text{supp}_k f_2$, we show the following:
\begin{equation}
\label{orthogonality}
\sum_{k_3} \chi_{S_{k_3,L_2}^1 \times S^2_{k_3,L_2}}(k_1,k_2) \lesssim \langle ANL_2 \rangle.
\end{equation}
Firstly, we consider the easy case of large $N$. Observe that for $N^{-1} \ll A^{-2}$, $N^{-1} \leq L_{\mathrm{med}}$ this is a consequence of the considerations from the proof of Theorem \ref{thm:GlobalNonlinearLoomisWhitneyInequality} as the $1/N$-lattice points can be related with the $\varepsilon$-balls from above.

In this case, like in \eqref{eq:RelevantJ},
\begin{equation*}
\sum_{k_3} \chi_{S_{k_3,L_2}^1 \times S^2_{k_3,L_2}} \lesssim A,
\end{equation*}
and we infer the bound with $C(A,N,L_1,L_2,L_3) = (A L_1 L_2 L_3)^{1/2} N$.


The case of smaller $N$ requires more sophisticated arguments. We prove \eqref{orthogonality} by contradiction. 
First we consider the simple case $L_2 \lesssim A^{-1} N^{-1}$. 
Assume that there exist $k_3'$, $\tilde{k_3} \in \supp_k {f_3}$ such that 
$|k_3'-\tilde{k_3}| \gg N^{-1}$ and there exists 
$(k_1', k_2') \in \supp_{k}{f_1} \times \supp_{k}{f_2}$ 
such that
\begin{equation}
(k_1', k_2') \in (S_{k_3',L_2}^1 \times S_{k_3',L_2}^2) \cap (S_{\tilde{k_3},L_2}^1 \times S_{\tilde{k_3},L_2}^2).\label{relation1}
\end{equation}
For $k_i \in \supp_k f_i$, let us write $\lambda_i (k_i) = (k_i, \psi_i (k_i)) \in S_i$ for $k_i \in \Z^2 / N$. 
Then $k_1' \in S_{k_3',L_2}^1$ implies that 
\begin{equation}
|\lambda_1(k_1') + \lambda_2'(k_3'-k_1') - \lambda_3(k_3')| \lesssim A^{-1} N^{-1}.\label{relation2}
\end{equation}
Similarly, it follows from \eqref{relation1} that
\begin{align}
&|\lambda_1(k_3'-k_2') + \lambda_2(k_2') - \lambda_3(k_3')| \lesssim A^{-1} N^{-1},\label{relation3}\\
&|\lambda_1(k_1') + \lambda_2(\tilde{k_3}-k_1') - \lambda_3(\tilde{k_3})| \lesssim A^{-1} N^{-1},\label{relation4}\\
&|\lambda_1 (\tilde{k_3}-k_2') + \lambda_2(k_2') - \lambda_3(\tilde{k_3})| \lesssim A^{-1} N^{-1}.\label{relation5}
\end{align}
\eqref{relation2}-\eqref{relation5} yield
\begin{align}
& |\bigl( \lambda_1(k_3'-k_2') - \lambda_1 (\tilde{k_3}-k_2')\bigr) - \bigl(\lambda_3(k_3') - 
\lambda_3(\tilde{k_3})\bigr)| \lesssim A^{-1} N^{-1},\label{relation6}\\
&  |\bigl( \lambda_2(k_3'-k_1') - \lambda_2 (\tilde{k_3}-k_1') \bigr) - \bigl(\lambda_3(k_3') - 
\lambda_3(\tilde{k_3})\bigr)| \lesssim A^{-1} N^{-1}.\label{relation7}
\end{align}
Define the vectors as 
\begin{equation*}
\vec{v}_1 = \lambda_1(k_3'-k_2') - \lambda_1 (\tilde{k_3}-k_2'), \quad 
\vec{v}_2 = \lambda_2(k_3'-k_1') - \lambda_2 (\tilde{k_3}-k_1'), \quad 
\vec{v}_3 =\lambda_3(k_3') - \lambda_3(\tilde{k_3}).
\end{equation*}
By the mean value theorem, there exist 
$\widehat{\lambda}_i \in S_i$ such that $\mathfrak{n}_1 
({\widehat{\lambda}_i}) \perp \vec{v}_i$. 
This, \eqref{relation6}, \eqref{relation7} and $|k_3' - \tilde{k_3}| \gg 1$ which means 
$|\vec{v}_3| \gg N^{-1}$ provide
\begin{equation*}
|\textnormal{det} ({\mathfrak{n}_1} 
({\widehat{\lambda}_1}), {\mathfrak{n}_2}({\widehat{\lambda}_2}), {\mathfrak{n}_3}({\widehat{\lambda}_3})) | \ll A^{-1},
\end{equation*}
which contradicts \eqref{eq:TransversalityAssumption}. 

Next we consider the case $A^{-1} N^{-1} \ll L_2 \leq N^{-1}$. 
By following the above argument, if $|k_3' - \tilde{k_3}| \gg AL_2$ we can show 
$(S_{k_3',L_2}^1 \times S_{k_3',L_2}^2) \cap (S_{\tilde{k_3},L_2}^1 \times S_{\tilde{k_3},L_2}^2)= \emptyset$. 
Thus, after a harmless decomposition, it suffices to show that for any $k_3' \in \supp_k f_3$ it holds
\begin{equation*}
\sum_{|k_3-k_3'| \ll A L_2} \chi_{S_{k_3,L_2}^1 \times S_{k_3,L_2}^2} (k_1,k_2) \lesssim \LR{A N L_2}.
\end{equation*}
Without loss of generality, we may assume $k_3'=(0,0)$ and 
$(k_1,k_2) \in S_{0,L_2}^1 \times S_{0,L_2}^2$. Define
\begin{equation*}
K_{k_1,k_2,L_2} = \{k_3 \in \Z^2/N \, | \, |k_3| \ll A L_2, \ (k_1,k_2) \in S_{k_3,L_2}^1 \times S_{k_3,L_2}^2\}.
\end{equation*}
Our goal is to show $\# K_{k_1,k_2,L_2} \lesssim A N L_2$. Let $k_3 \in K_{k_1,k_2,L_2}$. 
By following the same observation as in the former case $L_2 \lesssim A^{-1} N^{-1}$, it follows from $0 \in K_{k_1,k_2,L_2}$, $k_3 \in K_{k_1,k_2,L_2}$ that
\begin{align}
&|\lambda_1(k_1) + \lambda_2(-k_1) - \lambda_3(0)| \lesssim L_2,\label{relation2-1}\\
&|\lambda_1(-k_2) + \lambda_2(k_2) - \lambda_3(0)| \lesssim L_2,\label{relation2-2}\\
&|\lambda_1 (k_1) + \lambda_2(k_3-k_2) - \lambda_3(k_3)| \lesssim L_2,\label{relation2-3}\\
&|\lambda_1 (k_3-k_2) + \lambda_2(k_2) - \lambda_3(k_3)| \lesssim L_2.\label{relation2-4}
\end{align}
These yield
\begin{align}
& |\bigl( \lambda_1(-k_2) - \lambda_1 (k_3-k_2)\bigr) - \bigl(\lambda_3(0) - 
\lambda_3(k_3)\bigr)| \lesssim L_2,\label{relation2-5}\\
&  |\bigl( \lambda_2(-k_1) - \lambda_2 (k_3-k_1) \bigr) - \bigl(\lambda_3(0) - 
\lambda_3(k_3)\bigr)| \lesssim L_2.\label{relation2-6}
\end{align}
Now we define the hypersurfaces $S_1'$ and $S_2'$ as
\begin{equation*}
S_1' = S_1 - \lambda_1(-k_2) + \lambda_3(0), \qquad S_2' = S_2 - \lambda_2(-k_1) + \lambda_3(0).
\end{equation*}
Clearly, $\lambda_3(0) \in S_1' \cap S_2'$ and we deduce from \eqref{relation2-5} and 
\eqref{relation2-6} that
\begin{equation*}
\textnormal{dist}(\lambda_3(k_3), S_1') \lesssim L_2, \quad 
\textnormal{dist}(\lambda_3(k_3), S_2') \lesssim L_2.
\end{equation*}
Consequently, $k_3 \in K_{k_1,k_2,L_2}$ implies
\begin{equation*}
\begin{split}
k_3 \in \tilde{K}_{k_1,k_2,L_2} &:= \{ k_3 \in \Z^2/N \; | \; |k_3| \ll A L_2, \\
&\quad \quad \textnormal{dist}(\lambda_3(k_3), S_1')+ \textnormal{dist}(\lambda_3(k_3), S_2') \lesssim L_2\},
\end{split}
\end{equation*}
and it suffices to show $\#  \tilde{K}_{k_1,k_2,L_2} \lesssim A  N L_2$. 
To see this, we choose $\tilde{k}_3 \in \tilde{K}_{k_1,k_2,L_2}$ which satisfies 
$\displaystyle{ |\tilde{k}_3| \sim \sup_{k_3 \in \tilde{K}_{k_1,k_2,L_2}}|k_3|}$. 
Clearly, $|\tilde{k}_3| \lesssim A^{1/2}  N^{-1/2} L_2^{1/2}$ gives the desired estimate. 
Thus we assume $|\tilde{k}_3| \gg A^{1/2}  N^{-1/2} L_2^{1/2}$. 
Further, for simplicity, we here assume that $\tilde{k}_3$ is on the first-axis, i.e.$\,$there exists $\tilde{k}_{3,1} \in \Z/N$ such that $\tilde{k}_3 = (\tilde{k}_{3,1},0)$. 
For fixed $k_{3,1} \in \Z/N$ which satisfies $|k_{3,1}| \lesssim |\tilde{k}_{3,1}|$, we define
\begin{equation*}
\tilde{K}_{k_1,k_2,L_2}^{k_{3,1}} = \{ k_{3,2} \in \Z/N \, | \, (k_{3,1}, k_{3,2}) \in \tilde{K}_{k_1,k_2,L_2} \}
\end{equation*}
and show 
\begin{equation}
\# \tilde{K}_{k_1,k_2,L_2}^{k_{3,1}} \lesssim \max (A  N L_2^2/ |\tilde{k}_3|, 1),\label{est01-conv-thm}
\end{equation}
which gives the desired estimate as follows.
\begin{align*}
\#  \tilde{K}_{k_1,k_2,L_2} & \sim  |\tilde{k}_3| N \cdot \# \tilde{K}_{k_1,k_2,L_2}^{k_{3,1}}\\
& \lesssim \max(A L_2^2 N^2,  |\tilde{k}_3| N) \lesssim A L_2 N.
\end{align*}
Here we used $L_2 \leq N^{-1}$ and $|\tilde{k}_3| \ll A L_2$. 
We prove \eqref{est01-conv-thm} by contradiction. 
Assume that there exist $k_{3,2}$, $k_{3,2}' \in \tilde{K}_{k_1,k_2,L_2}^{k_{3,1}}$ such that
$d := |k_{3,2}-k_{3,2}'| \gg \max (A L_2^2/ |\tilde{k}_3|, 1/N)$. 
We define $\sigma_1$, $\sigma_2 \in \mathbb{S}^2$ as
\begin{equation*}
\sigma_1 = \frac{\lambda(0)-\lambda(\tilde{k}_3)}{|\lambda(0)-\lambda(\tilde{k}_3)|}, \quad 
\sigma_2 = \frac{\lambda(k_{3,1},k_{3,2})-\lambda(k_{3,1},k_{3,2}')}
{|\lambda(k_{3,1},k_{3,2})-\lambda(k_{3,1},k_{3,2}')|}.
\end{equation*}
Note that since $\|\nabla \psi_3 \|_{L^{\infty}} \lesssim 1$ there exists a constant $0<c<1$ such that $|\sigma_1 \cdot \sigma_2| <1-c$. 
By the same observation as above, it follows from $0$, $\tilde{k}_3 \in \tilde{K}_{k_1,k_2,L_2}$ that there exist 
$\lambda_1' \in S_1'$, $\lambda_2' \in S_2'$, $\lambda_3' \in S_3$ such that $|\lambda_i' - \lambda_3(0)| \lesssim |\tilde{k}_3|$ for $i=1,2,3$ and
\begin{equation}
\{ {\mathfrak{n}_1'}  ({\lambda_1'}), {\mathfrak{n}_2'}  ({\lambda_2'}), {\mathfrak{n}_3}  ({\lambda_3'}) \} 
\subset U_{\sigma_1}^{L_2/{|\tilde{k}_3|}}:= 
\{ \sigma \in \mathbb{S}^2_+ \, | \, |\sigma \cdot \sigma_1| \leq L_2/|\tilde{k}_3|\},\label{est02-conv-thm}
\end{equation}
where $\mathfrak{n}_j' (\lambda_j)$ $(j=1,2)$ is a unit normal on $\lambda_j \in S_j'$ and $\mathbb{S}^2_+ = \{(x,y,z)\in \mathbb{S}^2 \, | \, z >0\}.$ 
Similarly, $k_{3,2}$, $k_{3,2}' \in \tilde{K}_{k_1,k_2,L_2}^{k_{3,1}}$ implies that there exist 
$\tilde{\lambda}_1 \in S_1'$, $\tilde{\lambda}_2 \in S_2'$, $\tilde{\lambda}_3 \in S_3$ such that 
 $|\tilde{\lambda}_i - \lambda_3(0)| \lesssim |\tilde{k}_3|$
\begin{equation}
\{ {\mathfrak{n}_1'}  ({\tilde{\lambda}_1}), {\mathfrak{n}_2'}  (\tilde{\lambda}_2), {\mathfrak{n}_3}  (\tilde{\lambda}_3) \} 
\subset U_{\sigma_2}^{L_2/d}:= 
\{ \sigma \in \mathbb{S}^2_+ \, | \, |\sigma \cdot \sigma_2| \leq L_2/d\}.\label{est03-conv-thm}
\end{equation}
Our aim is to get
\begin{equation}
|{\mathfrak{n}_1'}  ({\lambda_1'})- {\mathfrak{n}_2'}  ({\lambda_2'})| + | {\mathfrak{n}_2'}  ({\lambda_2'}) - {\mathfrak{n}_3}  ({\lambda_3'})| + | {\mathfrak{n}_3}  ({\lambda_3'}) - {\mathfrak{n}_1'}  ({\lambda_1'})| \lesssim \tilde{d} := 
|\tilde{k}_3|+L_2/d+L_2/|\tilde{k}_3|.\label{est04-conv-thm}
\end{equation}
Note that $\tilde{d} \ll 1$. 
Since $\tilde{d} L_2/|\tilde{k}_3| \ll 1/A$, we easily confirm that \eqref{est02-conv-thm}, \eqref{est04-conv-thm} contradict the transversality condition \eqref{eq:TransversalityAssumption}. 
We turn to show \eqref{est04-conv-thm}. For the sake of contradiction, suppose that 
$|{\mathfrak{n}_1'}  ({\lambda_1'})- {\mathfrak{n}_2'}  ({\lambda_2'})| \gg \tilde{d}$. 
Firstly, we note that since 
$|\lambda_i' - \lambda_3(0)| + |\tilde{\lambda}_i - \lambda_3(0)| \lesssim |\tilde{k}_3|$ and $S_1'$, $S_2'$ are $C^{1,1}$-hypersurfaces, we have
\begin{equation}
|{\mathfrak{n}_1'}  ({\lambda_1'})- {\mathfrak{n}_1'}  ({\tilde{\lambda}_1})|+
|{\mathfrak{n}_2'}  ({\lambda_2'})- {\mathfrak{n}_2'}  ({\tilde{\lambda}_2})| \lesssim |\tilde{k}_3|.
\label{est05-conv-thm}
\end{equation}
We deduce from \eqref{est02-conv-thm}, \eqref{est03-conv-thm}, \eqref{est05-conv-thm} and the assumption $|{\mathfrak{n}_1'}  ({\lambda_1'})- {\mathfrak{n}_2'}  ({\lambda_2'})| \gg \tilde{d}$ that 
there exist $s_1$, $s_1'$, $s_2$, $s_2' \in \mathbb{S}^2_+$ which satisy
\begin{align*}
& s_1 \cdot \sigma_1 = s_1' \cdot \sigma_2 = s_2 \cdot \sigma_1 = s_2' \cdot \sigma_2 = 0,\\
& |s_1 - s_1'| + |s_2-s_2'| \lesssim \tilde{d}, \quad |s_1 - s_2| \gg \tilde{d}, \ |s_1'-s_2'| \gg \tilde{d}.
\end{align*}
For $a$, $b \in \R^3$, $a \times b$ denotes the cross product of $a$ and $b$. 
We see that the above contradicts $|\sigma_1 \cdot \sigma_2| <1-c$ as follows. 
\begin{align*}
|\sigma_1 \cdot \sigma_2| & = \frac{|(s_1 \times s_2) \cdot (s_1' \times s_2')|}
{|s_1 \times s_2| |s_1' \times s_2'|}\\
& \geq \frac{|s_1 \times s_2|-|s_1-s_1'|-|s_2-s_2'|}{|s_1 \times s_2| + |s_1-s_1'|+|s_2-s_2'|}\\
& > 1- c/2.
\end{align*}
Here we used $|s_1 \times s_2| \gg \tilde{d}$ which follows from $|s_1 - s_2| \gg \tilde{d}$ and $\tilde{d} \ll 1$.

By using the estimate \eqref{orthogonality}, we complete the proof of \eqref{eq:InequalityOnLattices} as the proof of Theorem \ref{thm:GlobalNonlinearLoomisWhitneyInequality}.
%
\end{proof}

%

\subsection{Examples}
\label{subsection:ExamplesNLW}
At last, we consider an example to compare Loomis-Whitney inequalities in $\R^3$ to the $\R \times$lattice case.

 Let $\psi(\xi,\eta) = \xi^3 + \eta^3$ and consider the surface
\begin{equation*} 
S=\{(\psi(\xi,\eta),\xi,\eta) \;| \;(\xi,\eta) \in \R^2 \}.
\end{equation*}
 
  Let $( \mathcal{U}_i)_{i=1}^3$ denote neighborhoods of $(N,-N)$, $(N,2N)$ and $(2N,N)$. Let $f_i \in L^2(\R^3)$, $f_i \geq 0$ and suppose that $\text{supp}(f_i) \subseteq \{(\xi,\eta,\tau) \in \R^3 \; | \; (\xi,\eta) \in \mathcal{U}_i, \; | \tau - \psi(\xi,\eta)| \leq L_i \}$. After rescaling, $(\xi,\eta) \rightarrow (\xi,\eta)/N$, $\tau \rightarrow \tau/N^3$ we find $\tilde{f}_i \in L^2(\R^3)$ supported in a fixed compact set. Moreover, we have
\begin{equation*}
|\det(\mathfrak{n}(\xi_1,\eta_1),\mathfrak{n}(\xi_2,\eta_2),\mathfrak{n}(\xi_1+\xi_2,\eta_1+\eta_2))| \gtrsim 1.
\end{equation*}

An application of Fubini's theorem and \cite[Corollary~1.6.,~p.~713]{BejenaruHerrTataru2010}
\begin{equation*}
\int_{\R^3} (f_1 * f_2)(\tau,\xi,\eta) f_3(\tau,\xi,\eta) d\xi d\eta d\tau \lesssim N^{-2} (L_1 L_2 L_3)^{1/2} \prod_{i=1}^3 \Vert f_i \Vert_{L^2}.
\end{equation*}

In the periodic case we consider $f_i: \R \times \mathbb{Z}^2 \rightarrow \R$. It is easy to see choosing $f_1$ with $\Z^2$-support $(N,-N)$, $f_2$ with $\Z^2$-support $(N,2N)$ and $f_3$ with $\Z^2$-support $(2N,N)$ that
\begin{equation*}
\int_{\R \times \Z^2} (f_1 * f_2)(\tau,\xi,\eta) f_3(\tau,\xi,\eta) (d\xi)_1 (d\eta)_1 d\tau \sim L_{\min}^{1/2} \prod_{i=1}^3 \Vert f_i \Vert_{L^2(\R \times \Z^2)}.
\end{equation*}

\section{Energy estimates}
\label{section:ShorttimeEnergyEstimates}
The main result of this section is the following proposition:

\begin{proposition}
\label{prop:ShorttimeEnergyEstimate}
Let $\lambda \geq 1$, $T \in (0,1]$, $1<s \leq s^\prime$ and $u_1, u_2 \in C([0,T],H_\lambda^3)$ be $\lambda$-periodic classical solutions to \eqref{eq:PeriodicZakharovKuznetsovEquation}. Set $v=u_1-u_2$. Then, we find the following estimates to hold:
\begin{align}
\label{eq:ShorttimeEnergyEstimateSolutions}
\Vert u_1 \Vert^2_{E_\lambda^{s^\prime}(T)} &\lesssim \Vert u_1(0) \Vert_{H_\lambda^{s^\prime}}^2 + \Vert u_1 \Vert^2_{F_\lambda^{s^\prime}(T)} \Vert u_1 \Vert_{F_\lambda^{s}(T)}, \\
\label{eq:ShorttimeEnergyEstimateDifferencesI}
\Vert v \Vert^2_{E_\lambda^0(T)} &\lesssim \Vert v(0) \Vert_{L_\lambda^2}^2 + \Vert v \Vert_{F_\lambda^0(T)}^2 ( \Vert u_1 \Vert_{F_\lambda^s(T)} +\Vert u_2 \Vert_{F_\lambda^s(T)}), \\
\label{eq:ShorttimeEnergyEstimateDifferencesII}
\Vert v \Vert^2_{E_\lambda^{s^\prime}(T)} &\lesssim \Vert v(0) \Vert_{H_\lambda^{s^\prime}}^2 + \Vert v \Vert_{F_\lambda^0(T)} \Vert v \Vert_{F_\lambda^{s^\prime}(T)} \Vert u_2 \Vert_{F_\lambda^{2s^\prime}(T)} + \Vert v \Vert^2_{F_\lambda^{s^\prime}(T)} \Vert v \Vert_{F_\lambda^s(T)}.
\end{align}
\end{proposition}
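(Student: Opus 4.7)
The plan is to run the standard short-time energy method: Littlewood--Paley decompose, differentiate $\Vert P_N u \Vert_{L_\lambda^2}^2$ in time using the ZK equation (the linear dispersive contribution integrates out by antisymmetry), and reduce the remaining nonlinear integrand to trilinear Fourier integrals controlled by the Loomis--Whitney-type bound Proposition \ref{prop:PeriodicNLW}. I describe the argument for \eqref{eq:ShorttimeEnergyEstimateSolutions}; the other two estimates follow by the same scheme applied to the difference equation
$$\partial_t v + (\partial_{x_1}^3 + \partial_{x_1}\partial_{x_2}^2) v = \tfrac{1}{2}\partial_{x_1}\bigl(v(u_1+u_2)\bigr).$$
Starting from
$$\Vert P_N u_1(t)\Vert_{L_\lambda^2}^2 - \Vert P_N u_1(0)\Vert_{L_\lambda^2}^2 = \int_0^t \!\!\int_{\lambda\mathbb{T}^2} P_N u_1 \cdot \partial_{x_1} P_N(u_1^2)\,dx\,ds,$$
Littlewood--Paley expand $u_1^2 = \sum_{N_1,N_2} P_{N_1} u_1 \cdot P_{N_2} u_1$ and split the sum into high-high-high ($N \sim N_1 \sim N_2$), high-high-low ($N_1 \sim N_2 \gg N$) and high-low interactions. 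In the high-high-low case the projection $P_N$ forces cancellation which is extracted through the commutator $[P_N, P_{N_1} u_1]$.

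On each dyadic block, introduce a smooth partition of unity of $[0,T]$ of width $N_{\max}^{-1}$, matching the frequency-dependent time localization encoded in $F_{k,\lambda}$. After Plancherel, on a single short-time piece the space-time integral becomes a convolution integral on $\mathbb{R}\times\mathbb{Z}^2/\lambda$ with each factor supported in an $L_i$-neighborhood of the characteristic surface $\{\tau=\varphi(\xi)\}$ for $\varphi(\xi,\eta)=\xi^3+\xi\eta^2$. Proposition \ref{prop:PeriodicNLW} then provides
$$|I| \lesssim L_{\min}^{1/2}\,\langle N L_{\mathrm{med}}\rangle^{1/2}\,\langle A N L_{\max}\rangle^{1/2}\prod_{i=1}^3 \Vert P_{N_i} u_1 \Vert_{X_{n_i,\lambda}},$$
provided the three normals $(1, -(3\xi_i^2+\eta_i^2), -2\xi_i\eta_i)$ are transverse with constant $A \lesssim 1$. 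Summation over the $O(T N_{\max})$ short-time windows and then over dyadic parameters recovers the derivative loss from $\partial_{x_1}$ (via the frequency-dependent time weight $T(N)=N^{-1}$) and produces the target bound $\Vert u_1\Vert_{F^{s'}_\lambda(T)}^2\Vert u_1\Vert_{F^s_\lambda(T)}$ at the threshold $s>1$.

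The main obstacle is quantifying transversality uniformly on the high-high-high block, since there the output frequency is the largest and there is no room to trade regularity. This is the precise place where the rational-torus hypothesis enters, as noted in the remark following Theorem \ref{thm:LocalWellposednessZK}: a direct determinant computation on the normals above exhibits a quantitative lower bound for lattice triples in $\mathbb{Z}^2/\lambda$ producing three-wave interactions on rational tori, but degenerates on $\sqrt{3}\lambda\mathbb{T}\times\lambda\mathbb{T}$. For \eqref{eq:ShorttimeEnergyEstimateDifferencesI} the trilinear integrals carry two $v$ factors and one $u_i$ factor, absorbed into $\Vert v\Vert_{F^0_\lambda(T)}^2\Vert u_i\Vert_{F^s_\lambda(T)}$. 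For \eqref{eq:ShorttimeEnergyEstimateDifferencesII}, writing $u_1+u_2=v+2u_2$ splits the nonlinearity into a $v^2$ contribution (yielding the $\Vert v\Vert_{F^{s'}_\lambda(T)}^2\Vert v\Vert_{F^s_\lambda(T)}$ term) and a $v u_2$ contribution; in the configuration where $u_2$ sits at much higher frequency than the remaining factors, one of the two $v$'s must be measured at zero regularity in order to absorb $2s'$ derivatives on $u_2$, producing the asymmetric factor $\Vert v\Vert_{F^0_\lambda(T)}\Vert v\Vert_{F^{s'}_\lambda(T)}\Vert u_2\Vert_{F^{2s'}_\lambda(T)}$. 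This asymmetric split is the mechanism enabling the subsequent Bona--Smith continuous dependence.
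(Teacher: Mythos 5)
Your high-level scheme is the same as the paper's: differentiate $\Vert P_N u\Vert_{L_\lambda^2}^2$ in time, exploit the dispersive symmetry, integrate by parts and use a commutator to put the derivative on the lowest frequency, reduce to a trilinear estimate with the weight $|k_{3,1}|+|k_{1,1}|N_3/N_1$, and close via a Loomis--Whitney-type bound. But there is a genuine gap where the entire technical weight of the proof lies.

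You write that Proposition \ref{prop:PeriodicNLW} gives the bound ``provided the three normals $(1,-(3\xi_i^2+\eta_i^2),-2\xi_i\eta_i)$ are transverse with constant $A\lesssim 1$.'' This assumption cannot simply be made: the transversality degenerates along the zero set of the quantity $F(\xi_1,\eta_1,\xi_2,\eta_2)$, and the constant in Proposition \ref{prop:PeriodicNLW} deteriorates like $\langle ANL_{\max}\rangle^{1/2}$ as $A$ grows, so the dyadic summation you invoke (``summation over the $O(TN_{\max})$ short-time windows and then over dyadic parameters recovers the derivative loss'') does not converge unless one quantifies, for each dyadic $A$ up to $N_1$, exactly which portions of frequency space have transversality of that size and then proves almost orthogonality between those pieces. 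This is the content of the Whitney-type decompositions and their almost-orthogonality (Lemmas \ref{lemma8.8}, \ref{lemma8.10}, \ref{lemma8.15}), the angular decompositions into sectors $\mathfrak{D}_j^A$, the case split (I)/(Ia)/(Ib)/(II) according to the sizes of $|k_{1,1}|,|k_{2,1}|$ and the angle $\angle(\ell_1,\ell_2)$, and Propositions \ref{prop8.6}, \ref{prop8.11}, \ref{prop8.12}. None of this appears in your sketch, and it is not routine: it is the refinement of the analysis from \cite{Kinoshita2019} adapted to the lattice, which is the paper's main contribution.

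You also mislocate the role of the arithmetic hypothesis. You say a ``direct determinant computation on the normals'' yields a quantitative transversality bound on rational tori. In fact the determinant computation (Proposition \ref{nlw-ZK}) is purely analytic and works over $\R^2$; the Diophantine input enters elsewhere. The paper first applies the symmetrizing linear map $M$ so that the dispersion becomes $\ell_1^3+\ell_2^3$ and the Whitney decompositions of \cite{Kinoshita2019} can be reused; this maps $\Z^2/\lambda$ to the skewed lattice $M(\Z^2/\lambda)$, and Liouville's theorem (Theorem \ref{thm:LiouvilleTheorem}, via Lemmas \ref{liouville}, \ref{liouvilleSymmetrized}) is used to count lattice points of $M(\Z^2/\lambda)$ in thin rectangles. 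These counting bounds feed into the Cauchy--Schwarz estimate Lemma \ref{lemma7.3}, which handles precisely those frequency regions where transversality fails and Loomis--Whitney gives nothing. Without these counting lemmas the nontransverse contributions are unbounded, and this is exactly where the proof degenerates on $\sqrt{3}\lambda\T\times\lambda\T$.

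Your treatment of \eqref{eq:ShorttimeEnergyEstimateDifferencesI} and \eqref{eq:ShorttimeEnergyEstimateDifferencesII} is a correct description of the bookkeeping, but it too rests on the unproved trilinear input.
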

At the end of the section, we provide an example indicating that the methods of this paper give estimates that are sharp up to endpoints in terms of Sobolev regularity.

For the proof of Proposition \ref{prop:ShorttimeEnergyEstimate}, we write by the fundamental theorem of calculus for a solution $u \in C([0,T],H_\lambda^3)$ to \eqref{eq:PeriodicZakharovKuznetsovEquation} on $\lambda \T^2$:
\begin{equation*}
\Vert P_N u(t) \Vert_{L^2(\lambda \T^2)}^2 = \Vert P_N u(0) \Vert_{L_\lambda^2}^2 + 2 \int_0^t \int_{\lambda \T^2} P_N u(s,x) \partial_x P_N (u^2)(s,x) dx ds.
\end{equation*}
 To exploit the form of the nonlinearity, we integrate by parts to put the derivative on the lowest frequency. We sketch the necessary standard arguments, for details we refer to previous works \cite{IonescuKenigTataru2008,Schippa2019HDBO}:
 
For $K \ll N$,
\begin{equation*}
\begin{split}
&\quad \int_0^T \int_{\lambda \T^2} P_N u(s,x) \partial_x P_N( u P_K u)(s,x) dx ds \\
&= \int_0^T \int_{\lambda \T^2} P_N u(s,x) \partial_x [(P_N u P_K u) + [P_N(u P_K u) - P_N u P_K u]](s,x) dx ds \\
&= \frac{1}{2} \int_0^T \int_{\lambda \T^2} (P_N u)^2(s,x) (\partial_x P_K u)(s,x) dx ds \\
&\quad + \int_0^T \int_{\lambda \T^2} (P_N u)(s,x) \partial_x [P_N(u P_K u) - P_N u P_K u](s,x) dx ds = A + B.
\end{split}
\end{equation*}
$A$ is already in suitable form. For $B$, we change to Fourier variables to write by the mean value theorem
\begin{equation*}
\begin{split}
&\quad \int_{\lambda \T^2} (P_N u)(s,x) \partial_x [P_N(u P_K u)-P_N u P_K u](s,x) dx \\
&= \frac{1}{\lambda^4} \sum_{\substack{k_1 + k_2 + k_3 = 0, \\ k_i \in \Z^2/\lambda} } \chi_N(k_1)  (-i k_{1,1}) [\chi_N(k_2+k_3) - \chi_N(k_2)] \chi_K(k_3) \prod_{i=1}^3 \hat{u}(s,k_i) \\
&= \frac{1}{\lambda^4} \sum_{\substack{ k_1 + k_2 + k_3 = 0, \\ k_i \in \Z^2 / \lambda} } \chi_N(k_1) \hat{u}(k_1) (-i k_{1,1}) (\nabla \chi_N(\zeta) \cdot k_3) \hat{u}(k_2) \chi_K(k_3) \hat{u}(s,k_3), \\
&\quad \quad \text{ where } |\zeta| \sim N.
\end{split}
\end{equation*}
In the following let $\lambda \geq 1$ denote the period length, $\xi, \eta \in \R$, and we denote $(d\sigma_i)_\lambda = d\tau_i (dk_i)_\lambda$ and
\begin{equation*}
\begin{split}
&\quad \int_* f_1(\tau_1,k_1) f_2(\tau_2,k_2) f_3(\tau_3,k_3) (d\sigma_1)_\lambda (d\sigma_2)_\lambda \\
&= \int_{(\R \times \Z^2 / \lambda)^2} f_1(\tau_1,k_1) f_2(\tau_2,k_2) f_3(\tau_1+\tau_2,k_1+k_2) (d\sigma_1)_\lambda (d\sigma_2)_\lambda.
\end{split}
\end{equation*}

To estimate the frequency localized functions in the short-time function spaces $F_{n,\lambda}$, time has to be localized reciprocally to the highest occuring frequency. The reductions are standard and can already be found in \cite[Section~5]{IonescuKenigTataru2008}. Taking absolute values, we find that the estimates from Proposition \ref{prop:ShorttimeEnergyEstimate} are implied by the following:
\begin{proposition}\label{prop8.1}
Let $\lambda \geq 1$. Assume that $1 \ll N_3 \lesssim N_2 \leq N_1$, $L_{\textnormal{med}} \leq N_1^2$, $f_i:\R \times \Z^2 / \lambda \rightarrow \R_{\geq 0}$ and $\supp f_i \subset G_{N_i,L_i}$. 
Then, we have
\begin{equation*}
\begin{split}
&\quad \left| \int_{*}  \bigl(|k_{3,1}| + |k_{1,1}| \frac{N_3}{N_1} \bigr)f_1 (\tau_1,k_1) 
f_2(\tau_2,k_2) 
f_3(\tau_3,k_3) (d \sigma_1)_\lambda (d \sigma_2)_\lambda \right| \\
& \lesssim 
N_3^{1+\ep} L_{\min}^{\frac{1}{2}} \LR{N_1^{-\frac{1}{2}} L_{\max}^{\frac{1}{2}}}
\|f_1 \|_{L_\tau^2 L^2_{(dk)_\lambda}} \| f_2 \|_{L_\tau^2 L^2_{(dk)_\lambda}} \| f_3 \|_{L_\tau^2 L^2_{(dk)_\lambda}}.
\end{split}
\end{equation*}
\end{proposition}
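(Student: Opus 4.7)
On the support of the integrand, $|k_{3,1}| \lesssim N_3$ and $|k_{1,1}| N_3/N_1 \lesssim N_3$, so the weight is dominated by $N_3$. Thus it suffices to prove the unweighted trilinear bound
\[
\left| \int_* f_1 f_2 f_3 (d\sigma_1)_\lambda (d\sigma_2)_\lambda \right|
\lesssim N_3^{\ep} L_{\min}^{1/2} \LR{N_1^{-1/2} L_{\max}^{1/2}} \prod_{i=1}^3 \|f_i\|_{L^2_\tau L^2_{(dk)_\lambda}}.
\]
The scheme will follow the energy estimate in \cite{Schippa2019HDBO}, but with Proposition \ref{prop:PeriodicNLW} replacing short-time bilinear Strichartz estimates as the key trilinear input.

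The first step is a modulation trichotomy. Since the resonance function $\Omega = \varphi(k_1)+\varphi(k_2)+\varphi(k_3)$ controls the combined modulations, and in the relevant high-high-low regime $N_3 \lesssim N_2 \sim N_1$ one has $|\Omega| \sim N_1 N_3 |k_{3,2}k_{1,1} - k_{3,1}k_{1,2}|$ after a short computation using $\nabla\varphi(\xi,\eta) = (3\xi^2+\eta^2, 2\xi\eta)$ and $k_1+k_2+k_3=0$, one has $L_{\max} \gtrsim |\Omega|$. When $L_{\max}$ is comparable to or exceeds $N_1^2$, the factor $\LR{N_1^{-1/2}L_{\max}^{1/2}}$ is already large enough that a direct Cauchy-Schwarz plus the $L^2$-bound \eqref{eq:XkEstimateI} handles this case without invoking the Loomis-Whitney inequality.

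In the main range $L_{\max} \ll N_1^2$, I perform a tile decomposition of $\supp_k f_1$ and $\supp_k f_2$ into cubes of sidelength $\sim N_3$; the convolution constraint $k_1+k_2=-k_3 \in \supp_k f_3$ and $|k_3|\sim N_3$ allow only $O(1)$ pairs of such cubes per fixed $k_3$-cube, so that almost-orthogonality reduces matters to one trilinear form per $k_3$-cube. On each tile I rescale $k \mapsto k/N_1$ and $\tau \mapsto \tau/N_1^3$, so that the graphs $\tau = \varphi(k)$ become $C^{1,1}$-hypersurfaces with $\|\nabla\psi_i\|_{L^\infty}\lesssim 1$, and the thickness parameters become $L_i/N_1^3$. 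The transversality analysis from \cite{Kinoshita2019} applied to the above explicit form of $\Omega$ shows that Assumption \ref{AssumptionSurfaces}.(iii) holds with transversality constant $A \sim N_1/N_3$ uniformly over the admissible tile triples, after a further finer case split on the sizes of the $k_{i,2}$-components to exclude coherent interactions. Proposition \ref{prop:PeriodicNLW} at spatial scale $N \sim N_1\lambda$ then delivers the rescaled Loomis-Whitney bound, and undoing the rescaling reproduces precisely the threshold $\LR{N_1^{-1/2}L_{\max}^{1/2}}$ coming from $\LR{AN L_{\max}}^{1/2}$ once $A \sim N_1/N_3$ and the constraint $L_{\mathrm{med}} \leq N_1^2$ are used.

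The principal obstacle is the verification of the transversality (Step 3): writing out $\det N(\sigma_1,\sigma_2,\sigma_3)$ in terms of the ZK dispersion, isolating the correct degenerate directions, and partitioning the tiles so that the determinant never drops below $N_3/N_1$. The combinatorial loss from this partition and from the tile decomposition must be summable and will contribute the $N_3^{\ep}$ factor. A secondary technical point is the discreteness of $\Z^2/\lambda$: one must ensure that the tile sidelength $N_3$ is at least $\lambda^{-1}$, which is automatic in the main regime $N_3 \gg 1$ assumed in the proposition.
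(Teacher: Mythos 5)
Your reduction to a single unweighted estimate is where the argument breaks. You bound the weight $|k_{3,1}|+|k_{1,1}|N_3/N_1$ by $N_3$ and then claim it suffices to prove
\[
\left|\int_* f_1 f_2 f_3\right| \lesssim N_3^{\ep} L_{\min}^{1/2}\LR{N_1^{-1/2}L_{\max}^{1/2}}\prod_i\|f_i\|.
\]
That unweighted inequality is false when both $|k_{1,1}|$ and $|k_{2,1}|$ are $\leq 2^{-5}N_1$. In that regime the paper needs a genuine gain from the weight itself: it shows that if $|k_{1,1}|\sim\alpha^{-1}N_1$ and the interaction confines $|k_{3,2}|\lesssim \alpha A^{-1}N_1$, then also $|k_{3,1}|\lesssim A^{-1}N_1$, so the weight is $\lesssim A^{-1}N_1$ rather than $\lesssim N_3$. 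The unweighted trilinear estimate available there carries a constant proportional to $A^{1/2}N_1^{-1/2}N_3^{1/2}$ (see \eqref{est02-prop0.8}), which at $A\sim N_1$ gives $N_3^{1/2}$, far above $N_3^{\ep}$; it is only the cancellation $A^{-1}N_1\cdot A^{1/2}N_1^{-1/2}N_3^{1/2}=A^{-1/2}N_1^{1/2}N_3^{1/2}\leq N_3$ against the summable $A^{-1/2}$ that closes the estimate. Discarding the weight before any case distinction loses that mechanism and a power of $N_3$. In short: the weight is not decorative — the proposition is structured precisely so that one may split into $\max(|k_{1,1}|,|k_{2,1}|)\geq 2^{-5}N_1$ (where the weight may be crudely bounded by $N_3$, as you propose) and the complementary regime (where it may not).

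There are further gaps. Your claimed formula $|\Omega|\sim N_1N_3|k_{3,2}k_{1,1}-k_{3,1}k_{1,2}|$ is incorrect; the example in Subsection~\ref{subsection:ExamplesNLW} with (symmetrized) modes $(N,-N),(N,2N),(2N,N)$ has $\Phi=0$ but a nonzero cross product. Your claim that transversality holds uniformly with $A\sim N_1/N_3$ over admissible tile triples also fails: the interaction can be resonant ($\Phi$ small) and nearly tangential ($F$ small) simultaneously near certain lines in frequency space, which is exactly why the paper carries out a full Whitney decomposition across a range of scales, using Cauchy--Schwarz (Lemma~\ref{lemma7.3}) when $\Phi$ is large and the lattice Loomis--Whitney (Proposition~\ref{nlw-ZK}) when $F$ is large, with the decomposition sets $\mathcal K$, $\mathcal K'$ and $\mathcal R_{A,m,i}$ tracking the degenerate directions. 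Finally, you underestimate the role of the linear symmetrization $M$ and the Liouville diophantine Lemma~\ref{liouville}: the estimate \eqref{est01-conv-thm} on the number of lattice points in thin slabs after symmetrization is not an automatic consequence of $N_3\gg 1$ — it uses the algebraic irrationality of $\sqrt 3$ and is what makes the result period-sensitive, as the paper's Remark after Lemma~\ref{liouville} emphasizes.
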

Note that Proposition \ref{prop:ShorttimeEnergyEstimate} in the case of large modulations $L_{\mathrm{med}} \gtrsim N_1^2$ follows from the Cauchy-Schwarz inequality (cf. Lemma \ref{lemma7.3}). For the same reason, we can suppose that $N_3 \gg 1$.
 
We record estimates, which will be used in the proof. Set $\psi(\xi,\eta)= \xi(\xi^2+\eta^2)$.
\begin{proposition}\label{nlw-ZK}
Let $K_1$, $K_2$, $K_3 \subset \R^2$ satisfy for $i=1$, $2$, $3$
\begin{equation}
\sup_{(\xi_i,\eta_i), (\xi_i',\eta_i') \in K_i} |\nabla \psi(\xi_i,\eta_i) - \nabla \psi(\xi_i', \eta_i')| \ll A^{-1} N_1^2,
\label{AssumptionSurfaceRegularity}
\end{equation}
and for all $(\xi_1, \eta_1) \in K_1$, $(\xi_2,\eta_2) \in K_2$
\begin{equation}
\bigl|
(\xi_1 \eta_2 - \xi_2 \eta_1) \, 
\bigl(3 (\xi_1^2+ \xi_1 \xi_2 + \xi_2^2) - (\eta_1^2 + \eta_1 \eta_2 + \eta_2^2)\bigr)
\bigr| \gtrsim A^{-1} N_1^4,
\label{AssumptionSurfaceTransversality}
\end{equation}
and $\tilde{K}_i = \R \times K_i$. 
Assume that $1 \leq A \leq N_1$, $1 \leq N_3 \lesssim N_1 \sim N_2$ and $f_i$ $(i=1,2,3)$ satisfy
$ \supp f_i \subset G_{N_i, L_i} $.
Then, we find the following estimate to hold:
\begin{equation*}
\begin{split}
&\quad \left| \int_{*}  f_1|_{\tilde{K}_1} (\tau_1,k_1) 
f_2|_{\tilde{K}_2}(\tau_2,k_2) 
f_3|_{\tilde{K}_3}(\tau_3,k_3) (d \sigma_1)_\lambda (d \sigma_2)_\lambda \right| \\
&\lesssim \tilde{C}(A, N, L_1, L_2, L_3) 
\|f_1 \|_{L_\tau^2 L^2_{(dk)_\lambda}} \| f_2 \|_{L_\tau^2 L^2_{(dk)_\lambda}} \| f_3 \|_{L_\tau^2 L^2_{(dk)_\lambda}},
\end{split}
\end{equation*}
where
\begin{equation*}
\tilde{C}(A,N_1,L_1,L_2,L_3) = L_{\min}^{1/2} \langle L_{\mathrm{med}} N_1^{-2} \rangle^{1/2} \langle A L_{\max} N_1^{-2} \rangle^{1/2}. 
\end{equation*}
\end{proposition}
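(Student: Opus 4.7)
The approach is to deduce the proposition from Proposition \ref{prop:PeriodicNLW} by an isotropic rescaling. Since $\psi(\xi,\eta) = \xi^3 + \xi\eta^2$ is homogeneous of degree $3$, the change of variables $\tilde{\tau} = \tau/N_1^3$, $\tilde{k} = k/N_1$ preserves the dispersion relation. Define $\tilde{f}_i(\tilde{\tau},\tilde{k}) = f_i(N_1^3 \tilde{\tau}, N_1 \tilde{k})$; these are supported on $\{(\tilde{\tau}, \tilde{k}) : \tilde{k} \in \tilde{K}_i, \; |\tilde{\tau} - \psi(\tilde{k})| \leq \tilde{L}_i\}$ with $\tilde{K}_i = K_i/N_1$ contained in a bounded subset of $\R^2$ and $\tilde{L}_i = L_i/N_1^3$. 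The lattice rescales as $\Z^2/\lambda \to \Z^2/(\lambda N_1)$, the measure as $(d\sigma)_\lambda = N_1^5 (d\tilde{\sigma})_{\lambda N_1}$, and norms as $\|f_i\|_{L^2_\tau L^2_{(dk)_\lambda}} = N_1^{5/2} \|\tilde{f}_i\|_{L^2_{\tilde{\tau}} L^2_{(d\tilde{k})_{\lambda N_1}}}$.

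I would then verify the hypotheses of Proposition \ref{prop:PeriodicNLW} with parameter $N = \lambda N_1$. The size condition $1 \leq A \ll \lambda N_1$ follows from $A \leq N_1$ and $\lambda \geq 1$. On $\tilde{K}_i$, boundedness of $|\tilde{k}|$ gives $\|\nabla\psi\|_{L^\infty(\tilde{K}_i)} \lesssim 1$ and $\|\nabla^2\psi\|_{L^\infty(\tilde{K}_i)} \lesssim 1$, yielding the H\"older condition \eqref{eq:HoelderConditionUnitNormals} with $\beta = 1$ after extending $\psi$ appropriately outside $\tilde{K}_i$.

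The crux is transversality. A direct computation with $\nabla\psi = (3\xi^2 + \eta^2, 2\xi\eta)$ and $k_3 = k_1 + k_2$ yields
\[
\det\bigl(\nabla\psi(k_3) - \nabla\psi(k_1), \; \nabla\psi(k_3) - \nabla\psi(k_2)\bigr) = -2(\xi_1\eta_2 - \xi_2\eta_1)\bigl(3(\xi_1^2 + \xi_1\xi_2 + \xi_2^2) - (\eta_1^2 + \eta_1\eta_2 + \eta_2^2)\bigr),
\]
and this $2 \times 2$ determinant equals, up to $O(1)$ normalization factors, the determinant of the three unit normals on resonant triples. So \eqref{AssumptionSurfaceTransversality} after rescaling yields transversality $\gtrsim A^{-1}$ on resonant triples in $\tilde{K}_1 \times \tilde{K}_2 \times \tilde{K}_3$. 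For general (non-resonant) triples, the flatness condition \eqref{AssumptionSurfaceRegularity} (which rescales to $\ll A^{-1}$) ensures that replacing any $\tilde{k}_i$ by any other point in $\tilde{K}_i$ perturbs the above determinant by $O(A^{-1})$, so transversality on the whole triple product is maintained; as the proof of Proposition \ref{prop:PeriodicNLW} only invokes transversality at points in the supports of $\tilde{f}_i$, this suffices.

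Applying Proposition \ref{prop:PeriodicNLW} to the rescaled setup and unwinding, one checks the powers of $N_1$ cancel cleanly and the trilinear integral is bounded by
\[
\lambda^{-1} L_{\min}^{1/2} \langle \lambda L_{\mathrm{med}}/N_1^2 \rangle^{1/2} \langle A\lambda L_{\max}/N_1^2\rangle^{1/2} \prod_{i=1}^3 \|f_i\|_{L^2_\tau L^2_{(dk)_\lambda}}.
\]
The elementary inequality $\lambda^{-1/2} \langle \lambda x \rangle^{1/2} \lesssim \langle x \rangle^{1/2}$, valid for $\lambda \geq 1$ and $x \geq 0$, applied to each of the two brackets, reduces this to the claimed constant $\tilde{C}(A, N_1, L_1, L_2, L_3)$. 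The main obstacle is the transversality verification: the $\lambda$-bookkeeping and the scaling relations are routine, but identifying $D$ with the expression in \eqref{AssumptionSurfaceTransversality} and carefully extending resonant transversality to generic transversality on $\tilde{K}_1 \times \tilde{K}_2 \times \tilde{K}_3$ via \eqref{AssumptionSurfaceRegularity} is where the structural content lies.
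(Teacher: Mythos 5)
Your proposal is correct and takes essentially the same route as the paper: rescale by $N_1$ to reduce to Proposition \ref{prop:PeriodicNLW} on $\R\times\Z^2/(\lambda N_1)$, identify the transversality determinant $\det(\nabla\psi(k_3)-\nabla\psi(k_1),\nabla\psi(k_3)-\nabla\psi(k_2))$ (equivalently the $3\times 3$ determinant of the three normals after row reduction) with the expression in \eqref{AssumptionSurfaceTransversality}, and use the gradient-variation condition \eqref{AssumptionSurfaceRegularity} to promote transversality on a convolution triple to transversality on all of $K_1\times K_2\times K_3$; the $\lambda$-absorption step $\lambda^{-1/2}\langle\lambda x\rangle^{1/2}\lesssim\langle x\rangle^{1/2}$ is left implicit in the paper but is exactly what makes the bookkeeping close.
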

\begin{proof}
We note that
\begin{equation*}
\tilde{C}(A, N_1, L_1, L_2, L_3)  = 
\begin{cases}
L_{\min}^{\frac{1}{2}} 
\LR{A N_1^{-2} L_{\max} }^{\frac{1}{2}}, \quad & L_{\textnormal{med}} \leq  N_1^2 ,\\
(A L_1 L_2 L_3)^{\frac{1}{2}} N_1^{-2}, \quad & N_1^2 \leq L_{\textnormal{med}}\\
\end{cases}
.
\end{equation*}

If we define $\tilde{f_i} (\tau,k) = f_i (N_1^3 \tau, N_1 k)$, these satisfy 
$\supp \tilde{f}_i \subset G_{N_i/N_1, L_i/N_1^3}$, and 
the claim can be rewritten as follows:
\begin{equation}
\begin{split}
& \left| \int_{**}  \tilde{f}_1|_{\tilde{K}_1^{N_1}} (\tau_1,k_1) 
\tilde{f}_2|_{\tilde{K}_2^{N_1}}(\tau_2,k_2) \tilde{f}_3|_{\tilde{K}_3^{N_1}}
(\tau_3,k_3) (d \sigma_1)_{\lambda N_1} (d \sigma_2)_{\lambda N_1} \right|\\
&  \lesssim N_1^{-\frac{5}{2}} \tilde{C}(A, N, L_1, L_2, L_3) 
\|\tilde{f}_1 \|_{L_\tau^2 L^2_{(dk)_{\lambda N_1}}} \| \tilde{f}_2 \|_{L_\tau^2 L^2_{(dk)_{\lambda N_1}}} \| \tilde{f}_3 \|_{L_\tau^2 L^2_{(dk)_{\lambda N_1}}},\label{est01-prop7.1}
\end{split}
\end{equation}
where $** =  (\R \times \mathbb{Z}^2/ \lambda N_1)^2 $, ${K}_i^{N_1} = \{(\xi,\eta) \in \R^2 \, | \, (N_1 \xi, N_1 \eta) \in K_i \}$ and $\tilde{K}_i^{N_1} = \R \times K_{i}^{N_1}$. 
Define $\tilde{L}_i = N_1^{-3} L_i$. Then, by using the notation $C(A, N, L_1, L_2, L_3)$ defined in  \eqref{ConstantC}, \eqref{est01-prop7.1} is implied by
\begin{equation}
\begin{split}
&\quad \left| \int_{**}  \tilde{f}_1|_{\tilde{K}_1^{N_1}} (\tau_1,k_1) 
\tilde{f}_2|_{\tilde{K}_2^{N_1}}(\tau_2,k_2) \tilde{f}_3|_{\tilde{K}_3^{N_1}}
(\tau_3,k_3) (d \sigma_1)_{ \lambda N_1} (d \sigma_2)_{\lambda N_1} \right|\\
& \lesssim  C(A, \lambda N_1, \tilde{L}_1, \tilde{L}_2, \tilde{L}_3) / ( \lambda N_1)
\|\tilde{f}_1 \|_{L_\tau^2 L^2_{(dk)_{\lambda N_1}}} \| \tilde{f}_2 \|_{L_\tau^2 L^2_{(dk)_{ \lambda N_1}}} \| \tilde{f}_3 \|_{L_\tau^2 L^2_{(dk)_{ \lambda N_1}}}.\label{est02-prop7.1}
\end{split}
\end{equation}
We define
\begin{equation*}
S_i = \{(\psi(\xi,\eta), \xi,\eta) \in \R^3 \, | \, (\xi,\eta) \in K_i^{N_1}, \ |(\xi,\eta)| \lesssim 1 \}.
\end{equation*} 
\eqref{est02-prop7.1} is immediately established by Proposition \ref{prop:PeriodicNLW} if the hypersurfaces $S_1$, $S_2$, $S_3$ satisfy Assumption 1. 
Since $\psi$ is a polynomial function, we only need to confirm that the hypersurfaces satisfy the 
necessary transversality condition. 
To show this, we describe the unit normals ${\mathfrak{n}_i}$ on 
$\lambda_i = (\psi(\xi_i,\eta_i),\xi_i,\eta_i) \in S_i$ explicitly:
\begin{equation*}
{\mathfrak{n}}_i(\lambda_i) = 
\frac{1}{\sqrt{1+ (3 \xi_i^2 + \eta_i^2)^2 + 4\xi_i^2 \eta_i^2}} 
\left(-1, \ 3 \xi_i^2 + \eta_i^2, \ 2 \xi_i \eta_i \right).
\end{equation*}
We can assume that there exist $\widehat{\lambda}_i = (\psi (\widehat{\xi}_i, \widehat{\eta}_i), (\widehat{\xi}_i, \widehat{\eta}_i)) \in S_i$ such that 
$\widehat{\lambda}_1 + \widehat{\lambda}_2 = \widehat{\lambda}_3$. 
It is easily observed that \eqref{AssumptionSurfaceRegularity} provides 
\begin{equation*}
\sup_{\lambda_i, \lambda_i' \in S_i} | \mathfrak{n}_i  
({\lambda_i})- \mathfrak{n}_i 
({\lambda_i'}) | \ll A^{-1}.
\end{equation*}
Therefore, it suffices to show 
\begin{equation*}
|\textnormal{det} ({\mathfrak{n}_1} 
({\widehat{\lambda}_1}), {\mathfrak{n}_2}({\widehat{\lambda}_2}), {\mathfrak{n}_3}({\widehat{\lambda}_3})) | 
\gtrsim A^{-1},
\end{equation*}
which follows from the condition \eqref{AssumptionSurfaceTransversality} as follows:
\begin{align*}
& |\textnormal{det} ({\mathfrak{n}_1} 
({\widehat{\lambda}_1}), {\mathfrak{n}_2}({\widehat{\lambda}_2}), {\mathfrak{n}_3}({\widehat{\lambda}_3})) |  \\
 \gtrsim & 
 \left|\textnormal{det}
\begin{pmatrix}
-1 & -1 & - 1 \\
3 {\widehat{\xi}_1}^2 + {\widehat{\eta}_1}^2  & 3 {\widehat{\xi}_2}^2 + {\widehat{\eta}_2}^2  
& 3 {\widehat{\xi}_3}^2 + {\widehat{\eta}_3}^2 \\
2 \widehat{\xi}_1 \widehat{\eta}_1   & 2 \widehat{\xi}_2 \widehat{\eta}_2  & 2 \widehat{\xi}_3 
\widehat{\eta}_3
\end{pmatrix} \right| \notag \\
\gtrsim &  \bigl| (\widehat{\xi}_1 \widehat{\eta}_2 - \widehat{\xi}_2 \widehat{\eta}_1)\bigl( 3 ({\widehat{\xi}_1}^2 + \widehat{\xi}_1 \widehat{\xi}_2 + {\widehat{\xi}_2}^2 ) - 
({\widehat{\eta}_1}^2 + \widehat{\eta}_1 \widehat{\eta}_2 + {\widehat{\eta}_2}^2) \bigr) \bigr|\\
\gtrsim & A^{-1}.
\end{align*}
\end{proof}
It is known that in $\R^2$ a linear transformation (cf. \cite{BenArtziKochSaut2003,GruenrockHerr2014}) allows for a symmetrization of the Zakharov-Kuznetsov equation to the following (up to irrelevant factors)
\begin{equation}
\label{eq:SymmetrizedZakharovKuznetsov}
\partial_t u + (\partial_{x_1}^3 + \partial_{x_2}^3) u = u (\partial_{x_1} + \partial_{x_2}) u.
\end{equation}
We digress for a moment to consider the effect of this transformation:
\begin{equation}
\label{eq:UnsymmetrizedZakharovKuznetsov}
\partial_t u + \partial_{x_1}^3 u + \partial_{x_1} \partial^2_{x_2} u = u \partial_{x_1} u, \quad (t,x) \in \R \times \lambda \T^2, \quad \lambda >0.
\end{equation}
In Fourier space we can morally still consider the symmetrized equation \eqref{eq:SymmetrizedZakharovKuznetsov}. However, the Fourier variables $(\xi,\eta)$ have to satisfy the following equation:
\begin{align}
\label{eq:FourierVariablesSymmetrizedEquation}
\xi &= \sqrt{2} ( \alpha + 3^{-1/2} \beta) \\
\eta &= \sqrt{2} (\alpha - 3^{-1/2} \beta)
\end{align}
for $(\alpha ,\beta) \in \mathbb{Z}^2/\lambda$. 
When we wants to use the orthogonal decompositions from \cite{Kinoshita2019}, we can do so after taking into account that the Fourier support of the ``symmetrized" equation \eqref{eq:SymmetrizedZakharovKuznetsov} is on $M(\mathbb{Z}^2/\lambda)$, where
\begin{equation*}
M = \sqrt{2}
\begin{pmatrix}
1 & 3^{-1/2} \\
1 & -3^{-1/2}
\end{pmatrix}
,\quad
M^{-1} = 2^{-3/2}
\begin{pmatrix}
1 & 1 \\
3^{1/2} & -3^{1/2}
\end{pmatrix}.
\end{equation*}
If we want to compute the measure of a set $S$ with respect to counting measure on $M(\mathbb{Z}^2/\lambda)$ it is more convenient to apply $M^{-1}$ and count the lattice points of $\mathbb{Z}^2/\lambda$ in $S^\prime := M^{-1} S$.

For one of the critical interactions, we have to estimate the number of points of $M(\mathbb{Z}^2/\lambda)$ in a rectangle parallel to the $\eta$-axis with height $N_2$ and width $\ll N_1^{-1}$ with $N_2 \ll N_1$. See Lemma \ref{liouvilleSymmetrized}.

A lattice point $(q,p) \in \N \times \Z$ is in the rotated rectangle parallel to the line $\eta =\sqrt{3} \xi$ with width $\ll N_1^{-1}$ if and only if $(q,p)$ satisfies
\begin{equation}
\label{eq:DiophantineApproximation}
|\sqrt{3} q -  p| \ll \frac{1}{N_1} \Leftrightarrow | \sqrt{3} - \frac{p}{q} | \ll \frac{1}{N_1 q}.
\end{equation}
Here we invoke Liouville's theorem on diophantine approximation:
\begin{theorem}
\label{thm:LiouvilleTheorem}
If $x$ is an irrational algebraic number of degree $n$ over the rational numbers, then there exists a constant $c(x)>0$ such that
\begin{equation*}
\left| x - \frac{p}{q} \right| > \frac{c(x)}{q^n}
\end{equation*}
holds for all integers $p$ and $q$ where $q>0$.
\end{theorem}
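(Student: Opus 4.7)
The plan is to give the classical elementary proof via the minimal polynomial together with the mean value theorem. Since $x$ is algebraic of degree $n$ over $\Q$, first I would fix an integer polynomial $f(X) = a_n X^n + a_{n-1}X^{n-1} + \ldots + a_0 \in \Z[X]$ of degree exactly $n$ with $f(x) = 0$; such an $f$ exists by clearing denominators of the minimal polynomial of $x$ over $\Q$, and irreducibility over $\Q$ is preserved. Note $n \geq 2$ is forced by the assumption that $x$ is irrational.

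Next I would show the lower bound $|f(p/q)| \geq q^{-n}$ whenever $p/q \neq x$. Multiplying through, $q^n f(p/q) = a_n p^n + a_{n-1} p^{n-1} q + \ldots + a_0 q^n \in \Z$. Since $f$ is irreducible over $\Q$ of degree $\geq 2$, $f$ has no rational roots, so $f(p/q) \neq 0$; thus $|q^n f(p/q)| \geq 1$.

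Then I would extract an upper bound of the form $|f(p/q)| \leq M |p/q - x|$ valid for $p/q$ in a neighborhood of $x$. Concretely, if $|p/q - x| \leq 1$, the mean value theorem applied to $f$ on the interval between $x$ and $p/q$ yields
\begin{equation*}
f(p/q) = f(p/q) - f(x) = f'(\xi) (p/q - x)
\end{equation*}
for some $\xi$ with $|\xi - x| \leq 1$. Setting $M := \sup_{|y-x| \leq 1} |f'(y)|$ (finite since $f'$ is a polynomial), we obtain $|f(p/q)| \leq M |p/q - x|$.

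Combining the two bounds gives $1/q^n \leq M |p/q - x|$, i.e.\ $|p/q - x| \geq 1/(M q^n)$ in the regime $|p/q - x| \leq 1$. In the remaining regime $|p/q - x| > 1$ the desired inequality is trivial since $q^{-n} \leq 1$. Choosing $c(x) := \tfrac{1}{2}\min\bigl(1, 1/M\bigr) > 0$ then yields the strict inequality $|x - p/q| > c(x)/q^n$ uniformly in $p, q$. There is no serious obstacle here; the only subtle point is the case distinction to handle the $|p/q - x| > 1$ regime separately, and ensuring the constant $M$ is well-defined on a bounded neighborhood of $x$, both of which are routine.
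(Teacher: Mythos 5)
Your proof is correct and is the standard classical argument for Liouville's theorem (nonvanishing of $q^n f(p/q) \in \Z$ combined with the mean value theorem bound on a compact neighborhood of $x$). The paper invokes this theorem without proof, as a well-known result; your argument is exactly the one the authors have in mind, so there is nothing to compare beyond noting the agreement.
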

Hence, since $\sqrt{3}$ is an irrational algebraic number of degree $2$, for $0<q \ll N_1$, the inequality \eqref{eq:DiophantineApproximation} has no solution since
\begin{equation*}
\left|\sqrt{3} - \frac{p}{q} \right| \leq \frac{1}{N_1 q} \leq \frac{c(\sqrt{3})}{q^2}.
\end{equation*}

The following lemma will be needed:
\begin{lemma}\label{liouville}
Let $\lambda \geq 1$, $\ell$, $w >0$ such that $\ell w \geq 1$ and $\alpha \in \R^2$. Define the vectors $\vec{v_1}=(1,\sqrt{3})$, $\vec{v_2} = (-1, \sqrt{3})$ and
\begin{align*}
S_{\ell, w}^\alpha & = 
\{ (\xi, \eta) \in \R^2 \, | \, (\xi,\eta)= c_1 \vec{v}_1 + c_2 \vec{v}_2, 
\quad |c_1| \leq \ell, \ |c_2| \leq w \} - \alpha,\\
\tilde{S}_{\ell, w}^{\alpha} & = \{ k \in \Z^2/\lambda \cap S_{\ell,w}^{\alpha}\}.
\end{align*}
Then, we have $\displaystyle{\sup_{\alpha \in \R^2} \# \tilde{S}_{\ell, w}^{\alpha} \lesssim \ell w \lambda^2}$.
\end{lemma}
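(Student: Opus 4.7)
The plan is to reduce the lattice-point count to a one-dimensional diophantine-approximation problem and then invoke Liouville's theorem on $\sqrt{3}$ exactly as in Theorem \ref{thm:LiouvilleTheorem}. First, I would write a point of $\Z^2/\lambda$ as $(\xi,\eta)=(p/\lambda,q/\lambda)$ with $(p,q)\in\Z^2$ and translate the condition $(\xi,\eta)\in S_{\ell,w}^\alpha$ into the system
\begin{align*}
|(q+\lambda\alpha_2)+\sqrt{3}(p+\lambda\alpha_1)| &\lesssim \ell\lambda,\\
|(q+\lambda\alpha_2)-\sqrt{3}(p+\lambda\alpha_1)| &\lesssim w\lambda,
\end{align*}
which is obtained by multiplying by $\lambda$ and decoupling the $c_1$ and $c_2$ constraints. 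The symmetry $(c_1,c_2)\leftrightarrow(c_2,c_1)$ paired with $(\xi,\eta)\mapsto(-\xi,\eta)$ lets me assume $w\le\ell$, and combined with $\ell w\ge 1$ this forces $\ell\ge 1$ and hence $\ell\lambda\ge 1$.

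Next, setting $v:=q-\sqrt{3}p$, the second inequality confines $v$ to some interval $I$ of length $\lesssim w\lambda$, while the first inequality restricts $p$ to an interval of length $\lesssim \ell\lambda$. I would then show that distinct lattice points in $\tilde S_{\ell,w}^\alpha$ produce $v$-values separated by at least $\gtrsim 1/(\ell\lambda)$. For $(p_1,q_1)\neq(p_2,q_2)$ in the admissible range, if $p_1=p_2$ then $|v_1-v_2|=|q_1-q_2|\ge 1\ge 1/(\ell\lambda)$; if $p_1\neq p_2$, then $|p_1-p_2|\lesssim \ell\lambda$ and Liouville's theorem applied to the quadratic irrational $\sqrt{3}$ gives
\[
|v_1-v_2|=|p_1-p_2|\cdot\left|\sqrt{3}-\tfrac{q_1-q_2}{p_1-p_2}\right|\ge \frac{c(\sqrt{3})}{|p_1-p_2|}\gtrsim \frac{1}{\ell\lambda}.
\]
Since $(p,q)\mapsto v$ is injective on $\Z^2$ by irrationality of $\sqrt{3}$, the count $\#\tilde S_{\ell,w}^\alpha$ is bounded by the number of separated points in $I$, which is $\lesssim w\lambda\cdot(\ell\lambda)+1\lesssim \ell w\lambda^2$ using $\ell w\lambda^2\ge 1$.

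The only delicate bookkeeping is the split between the cases $p_1=p_2$ and $p_1\neq p_2$, since Liouville's bound applies only in the latter; in the former the integer separation of the $q$'s is what saves us. Worth emphasizing is why a purely metric approach would not work: the naive convex-body lattice-point bound yields $\ell w\lambda^2+(\ell+w)\lambda+1$, whose perimeter term dominates in thin regimes such as $w\ll 1/\lambda$; it is precisely the irrational slopes of $\vec v_1$ and $\vec v_2$, quantified by Liouville, that prevent clustering of $\Z^2/\lambda$ in narrow strips and recover the area-type bound uniformly in $\alpha$ (the parameter $\alpha$ only shifts the interval $I$ and the $p$-window, so all constants are independent of it).
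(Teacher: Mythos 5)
Your proof is correct and follows the same Liouville-based separation strategy as the paper: both arguments reduce to bounding how close two lattice points can be in the $v = q - \sqrt{3}p$ direction via Theorem \ref{thm:LiouvilleTheorem}. The paper packages this as a decomposition into thin strips of width $\ll \ell^{-1}$, each containing at most one lattice point after translation, whereas you directly bound the number of $\gtrsim 1/(\ell\lambda)$-separated $v$-values in an interval of length $\lesssim w\lambda$; the mechanism is identical, and your version is merely more explicit in the bookkeeping (e.g.\ treating the $p_1 = p_2$ case separately).
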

\begin{remark}
Observe how the argument hinges on the ratio of the period lengths. We can still apply Theorem \ref{thm:LiouvilleTheorem} if the ratio of the period lengths is rational. On the other hand, if $k \in \Z/\lambda \times \sqrt{3} \Z / \lambda$ this lemma does not hold true. Indeed, for $\ell \gg 1$ and $0 < w \ll 1$ we find that 
$\# \{ k \in \Z/\lambda \times \sqrt{3} \Z / \lambda \cap S_{\ell,w}^{0}\} \sim \ell \lambda^2 $.
\end{remark}
\begin{proof}
We can assume $\lambda = 1$ by rescaling. By performing a suitable decomposition, it suffices to show 
$\displaystyle{\sup_{\alpha \in \R^2} \# \tilde{S}_{\ell, w}^{\alpha} \lesssim 1}$ for $w \ll \ell^{-1}$. 
Assume that $\tilde{S}_{\ell, w}^{\alpha}$ is not empty. 
Then, after parallel translation, it suffices to show 
$\# \tilde{S}_{2 \ell, 2 w}^{0} = 1$ which is verified by Theorem \ref{thm:LiouvilleTheorem} as above observation.
\end{proof}
The following estimate follows from the Cauchy-Schwarz inequality.
\begin{lemma}\label{lemma7.3}
For $i=1,2,3$, assume that $f_i : \R \times \Z^2/\lambda \rightarrow \R_{\geq 0}$, $\supp f_i \subset G_{N_i, L_i}$ \\
and $\displaystyle{\min_{i=1,2,3} \# \supp_k f_i \lesssim P}$. 
Then we have
\begin{equation*}
\begin{split}
&\quad \left| \int_{*}  f_1 (\tau_1,k_1) 
f_2 (\tau_2,k_2) 
f_3 (\tau_3,k_3) (d \sigma_1)_\lambda (d \sigma_2)_\lambda \right| \\
&\lesssim (P L_{\min})^{\frac{1}{2}}/\lambda 
\|f_1 \|_{L_\tau^2 L^2_{(dk)_\lambda}} \| f_2 \|_{L_\tau^2 L^2_{(dk)_\lambda}} \| f_3 \|_{L_\tau^2 L^2_{(dk)_\lambda}}.
\end{split}
\end{equation*}
\end{lemma}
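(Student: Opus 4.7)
The plan is to combine a single Cauchy--Schwarz in the output variable with Young's convolution inequality in $\tau$, choosing among three symmetric formulations of the trilinear form.

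First, I would exploit the symmetry of $\int_*$ under permutation of the three indices: the change of variables $(\tau_1,k_1)\mapsto(\tau_3-\tau_2,k_3-k_2)$ composed with the involution $(\tau,k)\mapsto(-\tau,-k)$ (which preserves each $G_{N_i,L_i}$, since the dispersion relation $\varphi(\xi,\eta)=\xi(\xi^2+\eta^2)$ is odd) converts each factor into a reflected copy with identical $L^2$-norm and identical $\#\supp_k$. Hence I may assume without loss of generality that the minimizing index is $i=3$, i.e.\ $\#\supp_k f_3\lesssim P$. Writing
\[
I = \int_{\R\times \Z^2/\lambda} (f_1*f_2)(\tau_3,k_3)\,f_3(\tau_3,k_3)\,d\tau_3 (dk_3)_\lambda
\]
and applying Cauchy--Schwarz together with the fact that $f_3$ is supported in $\supp f_3$ gives
\[
|I|\leq \|f_3\|_{L^2_\tau L^2_{(dk)_\lambda}}\;\|f_1*f_2\|_{L^2(\supp f_3)},
\]
so the task reduces to proving $\|f_1*f_2\|_{L^2(\supp f_3)}\lesssim \lambda^{-1}(PL_{\min})^{1/2}\|f_1\|_{L^2}\|f_2\|_{L^2}$.

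I would establish this by proving three competing bounds, one for each $L_i$, and taking their minimum. When $L_{\min}=L_3$: for each fixed $k_3\in\supp_k f_3$ the $\tau_3$-slice of $\supp f_3$ has length $\sim L_3$, and the pointwise bound $\|(f_1*f_2)(\cdot,k_3)\|_{L^\infty_{\tau_3}}\leq \|f_1\|_{L^2}\|f_2\|_{L^2}$ follows from Cauchy--Schwarz inside the convolution; summing the $\lesssim P$ values of $k_3$, each weighted by $\lambda^{-2}$, yields the bound with $L_3$. When $L_{\min}=L_1$ (the case $L_2$ is symmetric): applying Young's inequality $L^1_\tau * L^2_\tau\hookrightarrow L^2_\tau$ fiberwise in $k_1$, together with $\|f_1(\cdot,k_1)\|_{L^1_\tau}\leq L_1^{1/2}\|f_1(\cdot,k_1)\|_{L^2_\tau}$ (Cauchy--Schwarz on the interval of length $\sim L_1$) and Minkowski followed by Cauchy--Schwarz in $k_1$, gives $\|(f_1*f_2)(\cdot,k_3)\|_{L^2_{\tau_3}}\lesssim L_1^{1/2}\|f_1\|_{L^2}\|f_2\|_{L^2}$ uniformly in $k_3$; summing over $k_3\in\supp_k f_3$ yields the bound with $L_1$.

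There is no real obstacle; this is pure Cauchy--Schwarz/Young bookkeeping, consistent with the hint in the statement. The only point worth care is to keep all three bounds in play, so that the genuine minimum $L_{\min}=\min(L_1,L_2,L_3)$ emerges, rather than only the modulation attached to the factor we chose to place outside Cauchy--Schwarz.
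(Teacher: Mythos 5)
Your proof is correct and is essentially the argument the paper has in mind: the paper supplies no proof and only remarks that the estimate ``follows from the Cauchy--Schwarz inequality,'' and your Cauchy--Schwarz-in-the-output-variable reduction, followed by the pointwise $L^\infty_\tau$ bound for the $L_3$ case and the Young/Minkowski/Cauchy--Schwarz chain for the $L_1$ (and symmetric $L_2$) case, is precisely the standard way to carry that out. The bookkeeping of the $\lambda^{-2}$ weight from $(dk)_\lambda$ and the use of $\#\supp_k f_3\lesssim P$ are handled correctly, and the parity of $\varphi$ legitimizes the initial symmetry reduction.
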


 
We begin the proof in earnest, for which we consider the two cases:\\
(I) $\max(|k_{1,1}|, |k_{2,1}|) \geq 2^{-5} N_1$, $\quad$ 
(II) $\max(|k_{1,1}|, |k_{2,1}|) \leq 2^{-5} N_1$.

First we consider the case (I). 
Since $|k_{3,1}| + |k_{1,1}| N_3 / N_1 \lesssim N_3$, it suffices to show
\begin{equation}
\begin{split}
&\quad \left| \int_{*}  f_1 (\tau_1,k_1) 
f_2(\tau_2,k_2) 
f_3(\tau_3,k_3) (d \sigma_1)_\lambda (d \sigma_2)_\lambda \right| \\
& \lesssim 
N_3^{\ep} L_{\min}^{\frac{1}{2}} \LR{N_1^{-\frac{1}{2}} L_{\max}^{\frac{1}{2}}}
\|f_1 \|_{L_\tau^2 L^2_{(dk)_\lambda}} \| f_2 \|_{L_\tau^2 L^2_{(dk)_\lambda}} \| f_3 \|_{L_\tau^2 L^2_{(dk)_\lambda}}.
\end{split}\label{est01-prop8.1}
\end{equation}
We perform the linear transformation 
\begin{equation*}
(x,y) \to M (x,y) = \sqrt{2}( x + y/\sqrt{3}, x - y/\sqrt{3})
\end{equation*}
and show the following estimate which is equivalent to \eqref{est01-prop8.1}:
\begin{equation}
\begin{split}
&\quad \left| \int_{*}  g_1 (\tau_1,\ell_1) 
g_2(\tau_2,\ell_2) 
g_3(\tau_3,\ell_3) (d \tilde{\sigma}_1)_\lambda (d \tilde{\sigma}_2)_\lambda \right| \\
&\lesssim 
N_3^{\ep} L_{\min}^{\frac{1}{2}} \LR{N_1^{-\frac{1}{2}} L_{\max}^{\frac{1}{2}}}
\|g_1 \|_{L_\tau^2 L^2_{(d \ell)_\lambda}} \| g_2 \|_{L_\tau^2 L^2_{(d \ell)_\lambda}} \| g_3 \|_{L_\tau^2 L^2_{(d \ell)_\lambda}},
\end{split}\label{est02-prop8.1}
\end{equation}
where, for $\ell \in \R^2$ letting $\tilde{\psi}(\ell)= \tilde{\psi}(\ell_1,\ell_2)=\ell_1^3+\ell_2^3$, 
\begin{equation}
 \supp g_i \subset \tilde{G}_{N_i, L_i}, \quad 
\tilde{G}_{N,L} = \{ (\tau, \ell) \in \R \times M(\Z^2 / \lambda) \, | \, |\ell| \sim N , \ 
|\tau -\tilde{\psi}(\ell)| \lesssim L \}.
\label{Assumptionfunctiong}
\end{equation}
In \eqref{est02-prop8.1} $(d \tilde{\sigma}_i)_\lambda$ denotes the image measure under the linear transformation of $(d\sigma_i)_\lambda$; similarly, for $(d \ell)_\lambda$ and $(d k)_\lambda$. 

As above, the advantage of considering \eqref{est02-prop8.1} over \eqref{est01-prop8.1} is that we can reuse the Whitney type decompositions from \cite{Kinoshita2019}.\\
We note that the assumption (I) $\max(|k_{1,1}|, |k_{2,1}|) \geq 2^{-5} N_1$ provides 
$\max(|\ell_{1,1}+\ell_{1,2}|, |\ell_{2,1}+\ell_{2,2}|) \geq 2^{-6} N_1$ in \eqref{est02-prop8.1}. 
For convenience, performing the linear transformation $M$, we state the estimates that correspond to Proposition \ref{nlw-ZK} and Lemmas \ref{liouville} and \ref{lemma7.3}.
\begin{proposition}\label{nlw-ZKSymmetrized}
Let $K_1$, $K_2$, $K_3 \subset \R^2$ satisfy for $i=1$, $2$, $3$
\begin{equation}
\sup_{(\xi_i,\eta_i), (\xi_i',\eta_i') \in K_i} |\nabla \tilde{\psi}(\xi_i,\eta_i) - \nabla \tilde{\psi}(\xi_i', \eta_i')| \ll A^{-1} N_1^2,
\label{AssumptionSurfaceRegularitySymmetrized}
\end{equation}
and for all $(\xi_1, \eta_1) \in K_1$, $(\xi_2,\eta_2) \in K_2$
\begin{equation}
\bigl|
(\xi_1 \eta_2 - \xi_2 \eta_1) \, 
\bigl(\xi_1 \eta_2 + \xi_2 \eta_1 + 2(\xi_1 \eta_1 + \xi_2 \eta_2)\bigr)
\bigr| \gtrsim A^{-1} N_1^4,
\label{AssumptionSurfaceTransversalitySymmetrized}
\end{equation}
and $\tilde{K}_i = \R \times K_i$. 
Assume that $1 \leq A \leq N_1$, $1 \leq N_3 \lesssim N_1 \sim N_2$ and 
$g_i$ $(i=1,2,3)$ satisfy \eqref{Assumptionfunctiong}. 
Then, we have
\begin{equation*}
\begin{split}
&\quad \left| \int_{*}  g_1|_{\tilde{K}_1} (\tau_1,\ell_1) 
g_2|_{\tilde{K}_2}(\tau_2,\ell_2) 
g_3|_{\tilde{K}_3}(\tau_3,\ell_3) (d \tilde{\sigma}_1)_\lambda (d \tilde{\sigma}_2)_\lambda \right| \\
&\lesssim \tilde{C}(A, N, L_1, L_2, L_3) 
\|g_1 \|_{L_\tau^2 L^2_{(d \ell)_\lambda}} \| g_2 \|_{L_\tau^2 L^2_{(d \ell)_\lambda}} \| g_3 \|_{L_\tau^2 L^2_{(d \ell)_\lambda}}.
\end{split}
\end{equation*}
\end{proposition}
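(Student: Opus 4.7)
The proof is designed to mirror, step for step, the proof of Proposition \ref{nlw-ZK}, with the single polynomial $\psi(\xi,\eta)=\xi^3+\xi\eta^2$ replaced by its symmetric cousin $\tilde\psi(\xi,\eta)=\xi^3+\eta^3$. First I would rescale the frequency variables by $N_1$ and the time variable by $N_1^3$, setting $\tilde g_i(\tau,\ell)=g_i(N_1^3\tau,N_1\ell)$, $\tilde L_i=L_i/N_1^3$, $K_i^{N_1}=N_1^{-1}K_i$. This reduces the claim to a bound with a densified (and fixed-size) lattice $M(\Z^2/\lambda N_1)$ and with the rescaled hypersurfaces
\begin{equation*}
S_i=\{(\tilde\psi(\xi,\eta),\xi,\eta)\in\R^3\,|\,(\xi,\eta)\in K_i^{N_1}\},
\end{equation*}
which now have diameter $\lesssim 1$. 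Assumption \ref{AssumptionSurfaceRegularitySymmetrized} becomes a $C^{1,1}$ bound on the unit normals with a constant independent of $A,N_1$, delivering the H\"older condition \eqref{eq:HoelderConditionUnitNormals} of Assumption \ref{AssumptionSurfaces}(ii).

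The crux is then the verification of the transversality condition \eqref{eq:TransversalityAssumption} for these $S_i$. Directly from $\tilde\psi$ one reads off
\begin{equation*}
\mathfrak{n}_i(\lambda_i)=\frac{1}{\sqrt{1+9\xi_i^4+9\eta_i^4}}\bigl(-1,\,3\xi_i^2,\,3\eta_i^2\bigr),\qquad \lambda_i=(\tilde\psi(\xi_i,\eta_i),\xi_i,\eta_i)\in S_i.
\end{equation*}
By \eqref{AssumptionSurfaceRegularitySymmetrized} the normals are $o(A^{-1})$-constant on each $S_i$, so it suffices to check the determinant lower bound at one representative triple $\widehat\lambda_i\in S_i$ with $\widehat\lambda_1+\widehat\lambda_2=\widehat\lambda_3$. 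Performing column operations $C_j\mapsto C_j-C_1$ on the unnormalized matrix gives
\begin{equation*}
\det\!\begin{pmatrix}-1&-1&-1\\ 3\widehat\xi_1^2&3\widehat\xi_2^2&3\widehat\xi_3^2\\ 3\widehat\eta_1^2&3\widehat\eta_2^2&3\widehat\eta_3^2\end{pmatrix}
=-9\det\!\begin{pmatrix}\widehat\xi_2^2-\widehat\xi_1^2&\widehat\xi_3^2-\widehat\xi_1^2\\ \widehat\eta_2^2-\widehat\eta_1^2&\widehat\eta_3^2-\widehat\eta_1^2\end{pmatrix},
\end{equation*}
and substituting $\widehat\xi_3=\widehat\xi_1+\widehat\xi_2$, $\widehat\eta_3=\widehat\eta_1+\widehat\eta_2$ factors the $2\times 2$ determinant, after a direct expansion, as
\begin{equation*}
(\widehat\xi_1\widehat\eta_2-\widehat\xi_2\widehat\eta_1)\bigl(\widehat\xi_1\widehat\eta_2+\widehat\xi_2\widehat\eta_1+2(\widehat\xi_1\widehat\eta_1+\widehat\xi_2\widehat\eta_2)\bigr),
\end{equation*}
which is exactly the quantity assumed to be $\gtrsim A^{-1}N_1^4$ in \eqref{AssumptionSurfaceTransversalitySymmetrized}. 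After rescaling this yields $|\det(\mathfrak{n}_1,\mathfrak{n}_2,\mathfrak{n}_3)|\gtrsim A^{-1}$, completing the verification of Assumption \ref{AssumptionSurfaces}(iii).

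With Assumption \ref{AssumptionSurfaces} in hand for the $S_i$, I would then invoke Proposition \ref{prop:PeriodicNLW} on the lattice $M(\Z^2/\lambda N_1)$ (the linear transformation $M$ is fixed and invertible, so its proof applies verbatim, with constants depending only on $|\det M|$). Undoing the rescaling—which converts the factor $C(A,\lambda N_1,\tilde L_1,\tilde L_2,\tilde L_3)/(\lambda N_1)$ from Proposition \ref{prop:PeriodicNLW} into $N_1^{-5/2}\tilde C(A,N_1,L_1,L_2,L_3)$ via the case distinction in \eqref{ConstantC}—delivers exactly the asserted bound. The only real obstacle is the algebraic factorization of the $3\times 3$ determinant into the transversality quantity appearing in \eqref{AssumptionSurfaceTransversalitySymmetrized}; everything else is bookkeeping that parallels the proof of Proposition \ref{nlw-ZK}.
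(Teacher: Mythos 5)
Your proof is essentially correct and takes the same route the paper intends, just with the computational details written out. The paper gives no separate proof of Proposition \ref{nlw-ZKSymmetrized}: it declares the statement to be Proposition \ref{nlw-ZK} "after performing the linear transformation $M$," leaving the reader to check that the surfaces, lattices, and the two transversality conditions \eqref{AssumptionSurfaceTransversality} and \eqref{AssumptionSurfaceTransversalitySymmetrized} correspond under $M$ (the key identity being $\tilde\psi\circ M = 4\sqrt{2}\,\psi$). You instead re-run the proof of Proposition \ref{nlw-ZK} directly for $\tilde\psi = \xi^3+\eta^3$: rescale by $N_1$, identify the normals $\mathfrak{n}_i\propto(-1,3\xi_i^2,3\eta_i^2)$, use \eqref{AssumptionSurfaceRegularitySymmetrized} to reduce to one representative convolution triple, and verify that the unnormalized $3\times 3$ determinant, after the substitution $\widehat\xi_3=\widehat\xi_1+\widehat\xi_2$, $\widehat\eta_3=\widehat\eta_1+\widehat\eta_2$, factors (up to sign and the factor $9$) as $(\widehat\xi_1\widehat\eta_2-\widehat\xi_2\widehat\eta_1)\bigl(\widehat\xi_1\widehat\eta_2+\widehat\xi_2\widehat\eta_1+2(\widehat\xi_1\widehat\eta_1+\widehat\xi_2\widehat\eta_2)\bigr)$; I checked this factorization and it is correct. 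The one point worth flagging is your invocation of Proposition \ref{prop:PeriodicNLW} on the lattice $M(\Z^2/\lambda N_1)$ rather than the square lattice it is stated for: this is fine, as the counting and thickening arguments in that proof only see the lattice through its co-volume and covering geometry, but a cleaner bookkeeping step would be to push forward by $M^{-1}$ so that one lands back on $\Z^2/\lambda N_1$ and $\psi$, which is precisely the paper's (implicit) argument. Either way, the two transversality conditions are linked by $M$, so the content is the same.
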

Since $M \vec{v_1} = (\sqrt{2},0)$, $M\vec{v_2} = (0, - \sqrt{2})$, we find that Lemma \ref{liouville} is equivalent to the following:
\begin{lemma}\label{liouvilleSymmetrized}
Let $\lambda \geq 1$ and $c_1$, $c_2 >0$ such that $c_1 c_2 \geq 1$ and $\alpha \in \R^2$. 
Define 
\begin{align*}
R_{c_1,c_2}^\alpha & = 
\{ (\xi, \eta) \in \R^2 \, | \, |\xi|\leq c_1, \ |\eta| \leq c_2 \} - \alpha,\\
\tilde{R}_{c_1, c_2}^{\alpha} & = \{ k \in M(\Z^2/\lambda) \cap R_{\ell,w}^{\alpha}\}.
\end{align*}
Then, we have $\displaystyle{\sup_{\alpha \in \R^2} \# \tilde{R}_{c_1, c_2}^{\alpha} \lesssim \lambda^2 c_1 c_2}$.
\end{lemma}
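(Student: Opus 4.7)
The plan is to deduce this statement directly from Lemma \ref{liouville} via the linear change of coordinates $M$; this is why the paper bothers to state both the unsymmetrized version (with parallelograms spanned by $\vec{v}_1,\vec{v}_2$) and the symmetrized version here (with axis-aligned rectangles). The arithmetic content—Liouville's theorem applied to $\sqrt{3}$—is already packed into Lemma \ref{liouville}, so what remains is a short bookkeeping argument.

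I would proceed in three steps. First, invoke invertibility of $M$ as a linear map of $\R^2$ to set up the bijection
\begin{equation*}
\#\bigl( M(\Z^2/\lambda) \cap R_{c_1,c_2}^\alpha \bigr) \;=\; \#\bigl( \Z^2/\lambda \cap M^{-1} R_{c_1,c_2}^\alpha \bigr).
\end{equation*}
Second, identify the pulled-back rectangle: from $M\vec{v}_1 = (2\sqrt{2},0)$ and $M\vec{v}_2 = (0,-2\sqrt{2})$ I read off that $M^{-1}$ sends the standard axes to positive multiples of $\vec{v}_1$ and $-\vec{v}_2$, whence
\begin{equation*}
M^{-1} R_{c_1,c_2}^\alpha \;=\; S_{\ell,w}^{\alpha'}, \qquad \ell = \tfrac{c_1}{2\sqrt{2}}, \quad w = \tfrac{c_2}{2\sqrt{2}}, \quad \alpha' = M^{-1}\alpha,
\end{equation*}
so the symmetrized count is transformed verbatim into the unsymmetrized one. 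Third, apply Lemma \ref{liouville}: its hypothesis $\ell w \geq 1$ is guaranteed (up to a harmless absolute constant coming from $1/8$) by the standing assumption $c_1 c_2 \geq 1$, and the conclusion reads
\begin{equation*}
\# \tilde{R}_{c_1,c_2}^\alpha \;=\; \# \tilde{S}_{\ell,w}^{\alpha'} \;\lesssim\; \ell w \lambda^2 \;\lesssim\; c_1 c_2 \lambda^2,
\end{equation*}
uniformly in $\alpha$, as desired.

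There is no genuine obstacle; the main subtlety to get right is the geometric verification in step two, namely that $M^{-1}$ really sends the axis-aligned rectangle to the parallelogram in the span of $\vec{v}_1, \vec{v}_2$ with the stated widths, and in particular that the irrational axis directions generated by $\vec{v}_1,\vec{v}_2$ in the unsymmetrized setting correspond exactly to the axes for the lattice $M(\Z^2/\lambda)$. A small annoyance is the boundary case $c_1 c_2 \sim 1$, where $\ell w$ may fall just below $1$, but this is handled by monotonically enlarging the rectangle by an absolute factor before invoking Lemma \ref{liouville}.
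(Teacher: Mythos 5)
Your proof is correct and follows exactly the route the paper intends: the paper introduces this lemma as an \emph{equivalent} reformulation of Lemma~\ref{liouville} under the change of variables $M$, and you have supplied the bookkeeping carefully, including the conjugation $\#\bigl(M(\Z^2/\lambda)\cap R^\alpha_{c_1,c_2}\bigr)=\#\bigl(\Z^2/\lambda\cap S^{M^{-1}\alpha}_{\ell,w}\bigr)$ with $\ell=c_1/(2\sqrt{2})$, $w=c_2/(2\sqrt{2})$, and the harmless enlargement needed to ensure $\ell w\geq 1$ before invoking Lemma~\ref{liouville}. Your computed values $M\vec{v}_1=(2\sqrt{2},0)$ and $M\vec{v}_2=(0,-2\sqrt{2})$ are correct; the paper's stated $(\sqrt{2},0)$, $(0,-\sqrt{2})$ are off by a harmless factor of~$2$, which does not affect the $\lesssim$ conclusion.
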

\begin{lemma}\label{lemma8.4}
For $i=1,2,3$, assume \eqref{Assumptionfunctiong} and $\displaystyle{\min_{i=1,2,3} \# \supp_k g_i \lesssim P}$. 
Then, we have
\begin{equation*}
\begin{split}
&\quad \left| \int_{*}  g_1 (\tau_1,\ell_1) 
g_2 (\tau_2,\ell_2) 
g_3 (\tau_3,\ell_3) (d \tilde{\sigma}_1)_\lambda (d \tilde{\sigma}_2)_\lambda \right| \\
&\lesssim (P L_{\min})^{\frac{1}{2}}/\lambda 
\|g_1 \|_{L_\tau^2 L^2_{(d\ell)_\lambda}} \| g_2 \|_{L_\tau^2 L^2_{(d\ell)_\lambda}} \| g_3 \|_{L_\tau^2 L^2_{(d\ell)_\lambda}}.
\end{split}
\end{equation*}
\end{lemma}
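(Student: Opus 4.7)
The plan is to proceed by two successive applications of Cauchy--Schwarz, in direct analogy with Lemma~\ref{lemma7.3}; the linear transformation $M$ plays no role at this level. By Plancherel on $\R \times M(\Z^2/\lambda)$, the trilinear integral can be rewritten, for any choice of a distinguished index $i_3 \in \{1,2,3\}$ (with $\{i_1, i_2\}$ the complement), as
\[
I = \int_{\R \times M(\Z^2/\lambda)} (g_{i_1} * \tilde g_{i_2})(\tau, \ell) \, g_{i_3}(\tau, \ell) \, d\tau (d\ell)_\lambda,
\]
where $\tilde g_{i_2}$ differs from $g_{i_2}$ only by reflection and the convolution is taken with respect to $(d\ell)_\lambda$. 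A first Cauchy--Schwarz then yields $|I| \leq \|g_{i_3}\|_{L^2} \, \|g_{i_1} * \tilde g_{i_2}\|_{L^2}$.

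The second step is a pointwise bound on the convolution via Cauchy--Schwarz over the collision set
\[
E(\tau, \ell) = \{ (\tau', \ell') : (\tau', \ell') \in \supp g_{i_1}, \ (\tau - \tau', \ell - \ell') \in \supp g_{i_2} \}.
\]
By the support condition \eqref{Assumptionfunctiong}, for each admissible $\ell'$ (necessarily in $\supp_\ell g_{i_1} \cap (\ell - \supp_\ell g_{i_2})$) the $\tau'$-slice of $E(\tau, \ell)$ is the intersection of two intervals of lengths $\sim L_{i_1}$ and $\sim L_{i_2}$, hence has length $\lesssim \min(L_{i_1}, L_{i_2})$. Counting the admissible $\ell'$ with respect to $(d\ell')_\lambda$ contributes a factor $\lambda^{-2}$, giving
\[
\sup_{(\tau,\ell)} |E(\tau, \ell)|_{d\tau'(d\ell')_\lambda} \lesssim \lambda^{-2} \min(L_{i_1}, L_{i_2}) \, \min\bigl(\#\supp_\ell g_{i_1}, \, \#\supp_\ell g_{i_2}\bigr),
\]
and after integrating in $(\tau, \ell)$ and relabeling the variables,
\[
\|g_{i_1} * \tilde g_{i_2}\|_{L^2} \lesssim \lambda^{-1} \bigl[\min(L_{i_1}, L_{i_2}) \, \min\bigl(\#\supp_\ell g_{i_1}, \#\supp_\ell g_{i_2}\bigr)\bigr]^{1/2} \|g_{i_1}\|_{L^2} \|g_{i_2}\|_{L^2}.
\]

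The remaining matter is simply to pick the distinguished index $i_3$ favorably. Let $j_L$ be an index realizing $L_{\min}$ and $j_P$ an index for which $\#\supp_\ell g_{j_P} \lesssim P$. If $j_L \neq j_P$, set $\{i_1, i_2\} = \{j_L, j_P\}$; if $j_L = j_P$, set $i_1 = j_L$ and let $i_2$ be either of the two remaining indices. In either case $\min(L_{i_1}, L_{i_2}) \leq L_{\min}$ and $\min(\#\supp_\ell g_{i_1}, \#\supp_\ell g_{i_2}) \lesssim P$, and combining the two Cauchy--Schwarz estimates produces the asserted bound $(P L_{\min})^{1/2}/\lambda \, \prod_i \|g_i\|_{L^2_\tau L^2_{(d\ell)_\lambda}}$. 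There is no genuine obstacle: the argument is an immediate symmetrized analog of Lemma~\ref{lemma7.3}, and the only point worth flagging is the tracking of the $\lambda^{-2}$ from the normalized counting measure on $M(\Z^2/\lambda)$, which produces the claimed $\lambda^{-1}$ rather than an unweighted factor.
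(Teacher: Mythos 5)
Your argument is correct and is exactly the standard double Cauchy--Schwarz argument the paper has in mind: the paper states Lemma~\ref{lemma7.3} as an immediate consequence of Cauchy--Schwarz and notes that Lemma~\ref{lemma8.4} is its image under $M$, which is precisely the observation you make when you say $M$ plays no role at the level of $L^2$ norms. The only points worth a second glance are (a) your bookkeeping of the normalized counting measure, which correctly converts $\#\supp_\ell$ into a $\lambda^{-2}$-weighted measure and produces the stated $\lambda^{-1}$, and (b) your index selection, which correctly handles the possibility $j_L = j_P$ so that both $L_{\min}$ and the cardinality bound $P$ land on the pair $\{i_1,i_2\}$ appearing in the convolution; both are handled correctly.
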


We turn to \eqref{est02-prop8.1} in the case (I). 
We divide the proof into the two cases (Ia) $|\sin \angle(\ell_1,\ell_2)| \gtrsim 1$ and 
(Ib) $|\sin \angle(\ell_1,\ell_2)| \ll 1$.

Let us consider  the case (Ia) first. 
It should be noted that in this case we can assume $N_1 \sim N_2 \sim N_3$. 
We introduce the Whitney decomposition of $\R^2 \times \R^2$ into square tiles.
\begin{definition}[Whitney type decomposition]
 Let $A \geq 2^{10}$ be dyadic, $m \in \Z^2$ and set
\begin{align*}
\mathcal{T}^A_m &= \{ (\xi,\eta) \in \R^2 \; | \; (\xi,\eta) \in [m_1/A, (m_1+1)/A)) \\
&\quad \quad \times [m_2/A,(m_2+1)/A)) \},\\
\Phi (\xi_1, \eta_1, \xi_2 ,\eta_2) & = \xi_1 \xi_2(\xi_1 + \xi_2) + \eta_1 \eta_2 (\eta_1 + \eta_2), \\
F (\xi_1, \eta_1, \xi_2 ,\eta_2) & = \xi_1 \eta_2 +  \xi_2 \eta_1 + 2 (\xi_1 \eta_1 + \xi_2 \eta_2).
\end{align*}
We define
\begin{align*}
Z_A^1 & = \{ (k_1, k_2) \in \Z^2 \times \Z^2 \, | \, 
|\Phi(\xi_1, \eta_1, \xi_2, \eta_2)| \geq A^{-1} N_1^3  \ \ \textnormal{for any} \ (\xi_j, \eta_j) \in 
\mathcal{T}_{k_j}^A \},\\
Z_A^2  & = \{ (k_1, k_2) \in \Z^2 \times \Z^2 \, | \, 
| F (\xi_1, \eta_1, \xi_2 ,\eta_2)| \geq A^{-1} N_1^2  \ \ \textnormal{for any} \ (\xi_j, \eta_j) \in 
\mathcal{T}_{k_j}^A \},\\
Z_A & = Z_A^1 \cup Z_A^2 \subset \Z^2 \times \Z^2, 
\qquad R_A = \bigcup_{(k_1, k_2) \in Z_A} \mathcal{T}_{k_1}^A \times 
\mathcal{T}_{k_2}^A \subset \R^2 \times \R^2.
\end{align*}
It is clear that $A_1 \leq A_2 \Longrightarrow R_{A_1} \subset R_{A_2}$. 
Further, we define
\begin{equation*}
Q_A = 
\begin{cases}
R_A \setminus R_{A/2} \quad & \textnormal{for} \  A \geq 2^{11},\\
 \ R_{2^{10}}  \ \qquad & \textnormal{for} \  A = 2^{10}.
\end{cases}
\end{equation*}
and a set of pairs of integer coordinates $Z_A' \subset Z_A$ such that
\begin{equation*}
\bigcup_{(k_1, k_2) \in Z_A'} \mathcal{T}_{k_1}^A \times 
\mathcal{T}_{k_2}^A = Q_A.
\end{equation*}
We easily see that $Z_A'$ is uniquely defined and 
\begin{equation*}
A_1 \not= A_2 \Longrightarrow Q_{A_1} \cap Q_{A_2} = \emptyset, \quad 
\bigcup_{2^{10} \leq A \leq A_0} Q_{A} = R_{A_0},
\end{equation*}
where $A_0 \geq 2^{10}$ is dyadic. Thus, we can decompose $\R^2 \times \R^2$ as
\begin{equation*}
\R^2 \times \R^2 = \left( \bigcup_{2^{10} \leq A \leq A_0} Q_{A}\right) \cup (R_{A_0})^c.
\end{equation*}
Lastly, we define
\begin{align*}
\mathcal{A}  & = \{ (\tau_1, \xi_1, \eta_1) \times (\tau_2, \xi_2, \eta_2) \in \R^3 \times \R^3 \, | \, 
| \sin \angle \left( (\xi_1, \eta_1), (\xi_2, \eta_2) \right)| \gtrsim 1  \},\\
\tilde{Z}_{A} & = \{ (k_1, k_2) \in Z_A' \, | \, 
\left( \tilde{\mathcal{T}}_{k_1}^A \times \tilde{\mathcal{T}}_{k_2}^A\right) \cap \left( 
\tilde{G}_{N_1, L_1} \times \tilde{G}_{N_2, L_2} \right) \cap \mathcal{A} \not= 
\emptyset \}.
\end{align*}
\end{definition}
\begin{proposition}\label{prop8.6}
Let $\lambda \geq 1$ and $1 \leq A \leq N_1$. 
Assume that $1 \ll N_3 \lesssim N_2 \leq N_1$, $L_{\textnormal{med}} \leq N_1^2$, $(k_1,k_2) \in \tilde{Z}_{A}$ and 
\eqref{Assumptionfunctiong}. Then, we have
\begin{equation*}
\begin{split}
& \left| \int_{*}  g_1|_{\tilde{\mathcal{T}}_{k_1}^A} (\tau_1,\ell_1) 
g_2|_{\tilde{\mathcal{T}}_{k_2}^A} (\tau_2,\ell_2) 
g_3 (\tau_3,\ell_3) (d \tilde{\sigma}_1)_\lambda (d \tilde{\sigma}_2)_\lambda \right| \\
& \lesssim L_{\min}^{\frac{1}{2}} 
\bigl( A^{-\frac{1}{2}} N_1^{-\frac{1}{2}} L_{\max}^{\frac{1}{2}} + \LR{A^{\frac{1}{2}} N_1^{-1} L_{\max}^{\frac{1}{2}}} \bigr) \\
&\quad \quad \|g_1|_{\tilde{\mathcal{T}}_{k_1}^A} \|_{L_\tau^2 L^2_{(d\ell)_\lambda}} \| g_2|_{\tilde{\mathcal{T}}_{k_2}^A} \|_{L_\tau^2 L^2_{(d\ell)_\lambda}} \| g_3 \|_{L_\tau^2 L^2_{(d\ell)_\lambda}}.
\end{split}
\end{equation*}
\end{proposition}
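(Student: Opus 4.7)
The plan is to dichotomize the pair $(k_1,k_2)\in Z_A'$ according to which of the two Whitney conditions is saturated on the tile $\mathcal{T}_{k_1}^A\times\mathcal{T}_{k_2}^A$. Since $Z_A'\subseteq Z_A^1\cup Z_A^2$, one of the following holds uniformly on that tile: the transversality factor obeys $|F(\xi_1,\eta_1,\xi_2,\eta_2)|\gtrsim A^{-1}N_1^2$ (transversal case), or the resonance factor obeys $|\Phi(\xi_1,\eta_1,\xi_2,\eta_2)|\gtrsim A^{-1}N_1^3$ (resonant case). These two cases will produce, respectively, the second and the first summand of the target bound.

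In the transversal case I would apply Proposition \ref{nlw-ZKSymmetrized} with $K_i$ taken to be (a slight enlargement of) the tile $\mathcal{T}_{k_i}^A$ for $i=1,2$ and an $O(A^{-1})$-neighborhood of $k_1+k_2$ containing $\mathcal{T}_{k_1}^A+\mathcal{T}_{k_2}^A$ for $i=3$. Verification of \eqref{AssumptionSurfaceRegularitySymmetrized} is immediate: $\nabla\tilde\psi(\ell)=(3\ell_1^2,3\ell_2^2)$ is $O(N_1)$-Lipschitz on $\{|\ell|\lesssim N_1\}$, so its oscillation on a tile of diameter $A^{-1}$ is $\lesssim N_1/A$, which is $\ll A^{-1}N_1^2$ since $N_1\gg 1$. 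For \eqref{AssumptionSurfaceTransversalitySymmetrized}, the angular condition $|\sin\angle(\ell_1,\ell_2)|\gtrsim 1$ encoded in $\tilde Z_A$, together with $N_1\sim N_2$ (which holds in case (Ia)), yields $|\xi_1\eta_2-\xi_2\eta_1|\gtrsim N_1^2$; multiplying by $|F|\gtrsim A^{-1}N_1^2$ gives $A^{-1}N_1^4$, as required. Proposition \ref{nlw-ZKSymmetrized} then produces the factor $\tilde C(A,N_1,L_1,L_2,L_3)$, which under the standing hypothesis $L_{\mathrm{med}}\leq N_1^2$ simplifies to $L_{\min}^{1/2}\LR{A^{1/2}N_1^{-1}L_{\max}^{1/2}}$, matching the second summand.

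In the resonant case the algebraic identity $\tilde\psi(\ell_1)+\tilde\psi(\ell_2)-\tilde\psi(\ell_1+\ell_2)=-3\Phi(\ell_1,\ell_2)$, combined with the modulation bounds $|\tau_j-\tilde\psi(\ell_j)|\lesssim L_j$ and the convolution constraint $\tau_3=\tau_1+\tau_2$, forces $|\Phi|\lesssim L_{\max}$ on the support of the integrand, hence $L_{\max}\gtrsim A^{-1}N_1^3$. Since each tile of side $A^{-1}$ contains at most $\lesssim\max(\lambda^2A^{-2},1)$ points of $M(\Z^2/\lambda)$, Lemma \ref{lemma8.4} yields a bound of the form $(A^{-1}+\lambda^{-1})L_{\min}^{1/2}\prod_{i}\|g_i\|_{L^2_\tau L^2_{(d\ell)_\lambda}}$. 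A short computation invoking $L_{\max}\gtrsim A^{-1}N_1^3$, $N_1\geq 1$ and the standing hypothesis $A\leq N_1\leq N_1\lambda$ shows both $A^{-1}$ and $\lambda^{-1}$ are $\lesssim A^{-1/2}N_1^{-1/2}L_{\max}^{1/2}$, reproducing the first summand.

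The only subtlety I anticipate is verifying that the Whitney lower bound on $|F|$, phrased in terms of planar tiles of diameter $A^{-1}$, translates cleanly into the scalable transversality constant $A$ required by Assumption \ref{AssumptionSurfaces}.(iii) on the lifted hypersurfaces $\{(\tilde\psi(\ell),\ell)\}$; this is precisely the explicit determinant computation carried out in the proof of Proposition \ref{nlw-ZKSymmetrized}, so no further surface-geometric argument is needed.
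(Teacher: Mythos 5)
Your proof follows the same high-level strategy as the paper's: dichotomize $\tilde Z_A$ according to whether the Whitney lower bound $|\Phi|\gtrsim A^{-1}N_1^3$ (resonant) or $|F|\gtrsim A^{-1}N_1^2$ (transversal) holds on $\mathcal{T}_{k_1}^A\times\mathcal{T}_{k_2}^A$, use Lemma~\ref{lemma8.4} in the first case and Proposition~\ref{nlw-ZKSymmetrized} in the second, and recombine. However, there is a systematic scaling error: you treat the Whitney tile $\mathcal{T}_{k}^A$ as having side length $A^{-1}$, whereas its intended side length is $A^{-1}N_1$. (The paper's displayed definition of $\mathcal{T}^A_m$ omits the factor $N_1$, but this is evidently a typo: the proof of Proposition~\ref{prop8.6} states explicitly ``$\mathcal{T}_{k}^A$ is a square tile whose side length is $A^{-1}N_1$'', and the choice $P=\lambda^2A^{-2}N_1^2$ in Lemma~\ref{lemma8.4} is consistent only with that reading.)

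This matters in the transversal subcase. With the correct diameter $\sim A^{-1}N_1$, the oscillation of $\nabla\tilde\psi$ over a tile is only $\lesssim N_1\cdot A^{-1}N_1 = A^{-1}N_1^2$, \emph{not} $\ll A^{-1}N_1^2$, so condition \eqref{AssumptionSurfaceRegularitySymmetrized} is not satisfied on the tile itself. The paper therefore inserts a further ``harmless decomposition'' of each tile into pieces of radius $\ll A^{-1}N_1$ before invoking Proposition~\ref{nlw-ZKSymmetrized}; this step is genuinely needed and is missing from your argument (your claimed oscillation bound $\lesssim N_1/A\ll A^{-1}N_1^2$ only holds for the under-sized tile). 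In the resonant subcase, your count $P=\max(\lambda^2A^{-2},1)$ should be $\lambda^2A^{-2}N_1^2$, giving $A^{-1}N_1L_{\min}^{1/2}$ from Lemma~\ref{lemma8.4} rather than $(A^{-1}+\lambda^{-1})L_{\min}^{1/2}$; your final conclusion still follows since $L_{\max}\gtrsim A^{-1}N_1^3$ compensates, but the intermediate claim as written is false. Your verification of the transversality condition \eqref{AssumptionSurfaceTransversalitySymmetrized} and the simplification $\tilde C\sim L_{\min}^{1/2}\LR{A^{1/2}N_1^{-1}L_{\max}^{1/2}}$ under $L_{\mathrm{med}}\leq N_1^2$ are both correct and agree with the paper.
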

\begin{proof}
For $(\xi_1,\eta_1) \times (\xi_2,\eta_2) \in \mathcal{T}_{k_1}^A \times \mathcal{T}_{k_2}^A$, it holds either 
$|\Phi(\xi_1, \eta_1, \xi_2, \eta_2)| \geq A^{-1} N_1^3$ or $|F(\xi_1, \eta_1, \xi_2, \eta_2)| \geq A^{-1} N_1^2$. 
If $|\Phi(\xi_1, \eta_1, \xi_2, \eta_2)| \geq A^{-1} N_1^3$, by using Lemma \ref{lemma8.4} with $P = \lambda^2 A^{-2}N_1^2$, we obtain
\begin{equation*}
\begin{split}
&\quad \left| \int_{*}  g_1|_{\tilde{\mathcal{T}}_{k_1}^A} (\tau_1,\ell_1) 
g_2|_{\tilde{\mathcal{T}}_{k_2}^A} (\tau_2,\ell_2) 
g_3 (\tau_3,\ell_3) (d \tilde{\sigma}_1)_\lambda (d \tilde{\sigma}_2)_\lambda \right| \\
& \lesssim  A^{-\frac{1}{2}} N_1^{-\frac{1}{2}} L_{\min}^{\frac{1}{2}} L_{\max}^{\frac{1}{2}} 
\|g_1|_{\tilde{\mathcal{T}}_{k_1}^A} \|_{L_\tau^2 L^2_{(d \ell)_\lambda}} \| g_2|_{\tilde{\mathcal{T}}_{k_2}^A} \|_{L_\tau^2 L^2_{(d \ell)_\lambda}} \| g_3 \|_{L_\tau^2 L^2_{(d \ell)_\lambda}}.
\end{split}
\end{equation*}
Next we assume $|F(\xi_1, \eta_1, \xi_2, \eta_2)| \geq A^{-1} N_1^2$. 
This case is handled by Proposition \ref{nlw-ZKSymmetrized}. 
Note that the assumption $| \sin \angle \left( (\xi_1, \eta_1), (\xi_2, \eta_2) \right)| \gtrsim 1$ implies 
$|\xi_1 \eta_2 -\xi_2 \eta_1| \gtrsim N_1^2$ which means 
\eqref{AssumptionSurfaceTransversalitySymmetrized} for 
$(\xi_1,\eta_1) \times (\xi_2,\eta_2) \in \mathcal{T}_{k_1}^A \times \mathcal{T}_{k_2}^A$. 
Since $\mathcal{T}_{k}^A$ is a square tile whose side length is $A^{-1}N_1$, after performing harmless decompositions, 
$\supp_k g_i$ is confined in a ball such that its radius is $r \ll A^{-1}N_1$, which provides  \eqref{AssumptionSurfaceRegularitySymmetrized}. 
Consequently, because $L_{\textnormal{med}} \leq N_1^2$, the claim follows from Proposition \ref{nlw-ZKSymmetrized}.
\end{proof}

In the following we recall the almost orthogonal decompositions from \cite{Kinoshita2019}.
\begin{definition}
Let $\mathcal{K}_0$, $\mathcal{K}_1$, $\mathcal{K}_2$, $\mathcal{K}_0'$, $\mathcal{K}_1'$, 
$\mathcal{K}_2' \subset \R^2$ and  $\tilde{\mathcal{K}}_0$, $\tilde{\mathcal{K}}_1$, 
$\tilde{\mathcal{K}}_2$, 
$\tilde{\mathcal{K}}_0'$, 
$\tilde{\mathcal{K}}_1'$, $\tilde{\mathcal{K}}_2' \subset \R^3$ be defined as follows:
\begin{align*}
\mathcal{K}_0 & = \left\{ (\xi, \eta) \in \R^2 \, | \, \left| 
\eta -(\sqrt{2} - 1)^{\frac{4}{3}} \xi \right| 
\leq 2^{-20} N_1 \right\},\\
\mathcal{K}_1 & = \left\{ (\xi, \eta) \in \R^2 \, | \, \left| 
\eta - ( \sqrt{2}+ 1 )^{\frac{2}{3}} (\sqrt{2} + \sqrt{3} ) \xi \right| 
\leq 2^{-20} N_1 \right\},\\
\mathcal{K}_2 & = \left\{ (\xi, \eta) \in \R^2 \, | \, \left| 
\eta + ( \sqrt{2}+ 1 )^{\frac{2}{3}} (\sqrt{3} - \sqrt{2} ) \xi \right| 
\leq 2^{-20} N_1 \right\},\\
\mathcal{K}_0' & =  \left\{ (\xi, \eta) \in \R^2 \ | \ 
 (\eta, \xi) \in \mathcal{K}_0  \right\},\\
\mathcal{K}_1' & = \left\{ (\xi, \eta) \in \R^2 \ | \ (\eta, \xi) \in \mathcal{K}^1 \right\},\\
\mathcal{K}_2' & = \left\{ (\xi, \eta) \in \R^2 \ | \ (\eta, \xi) \in \mathcal{K}^2 \right\},\\
\tilde{\mathcal{K}}_j & = \R \times \mathcal{K}_j, \quad \tilde{\mathcal{K}}_j' = \R \times \mathcal{K}_j'
 \ \ \textnormal{for} \ j = 0, 1,2.
\end{align*}
We define the subsets of $\R^2 \times \R^2$ and $\R^3 \times \R^3$ as
\begin{align*}
\mathcal{K} \, = & ( \mathcal{K}_0 \times ( \mathcal{K}_1\cup \mathcal{K}_2 ) ) \cup 
( ( \mathcal{K}_1\cup \mathcal{K}_2 ) \times  \mathcal{K}_0 ) \subset \R^2 \times \R^2,\\
\tilde{\mathcal{K}} \, = & ( \tilde{\mathcal{K}}_0 \times ( \tilde{\mathcal{K}}_1 
\cup \tilde{\mathcal{K}}_2 ) ) \cup 
( 
( \tilde{\mathcal{K}}_1\cup \tilde{\mathcal{K}}_2 ) \times  \tilde{\mathcal{K}}_0 ) \subset \R^3 \times \R^3,\\
\mathcal{K}' = & \left( \mathcal{K}_0' \times \left( \mathcal{K}_1' \cup \mathcal{K}_2' \right) \right) \cup 
\left( 
\left( \mathcal{K}_1' \cup \mathcal{K}_2' \right) \times  \mathcal{K}_0' \right) \subset \R^2 \times \R^2,\\
\tilde{\mathcal{K}}'  = & ( \tilde{\mathcal{K}}_0' \times ( \tilde{\mathcal{K}}_1' 
\cup \tilde{\mathcal{K}}_2' ) ) \cup 
( 
( \tilde{\mathcal{K}}_1'\cup \tilde{\mathcal{K}}_2' ) \times  \tilde{\mathcal{K}}_0' ) \subset \R^3 \times \R^3,
\end{align*}
and the complementary sets as 
\begin{align*}
(\mathcal{K})^c & =  (\R^2 \times \R^2) \setminus \mathcal{K} , \quad \ 
(\tilde{\mathcal{K}})^c =  (\R^3 \times \R^3) \setminus \tilde{\mathcal{K}}\\
(\mathcal{K}')^c & = (\R^2 \times \R^2) \setminus \mathcal{K}', \quad 
(\tilde{\mathcal{K}}')^c = (\R^3 \times \R^3) \setminus \tilde{\mathcal{K}}'.
\end{align*}
Lastly, we define
\begin{equation*}
\widehat{Z}_{A} = \{ (k_1, k_2) \in \tilde{Z}_{A} \, | \, 
\left( \mathcal{T}_{k_1}^A \times \mathcal{T}_{k_2}^A\right) \cap \left( 
(\mathcal{K})^c \cap (\mathcal{K}')^c \right) \not= 
\emptyset \},
\end{equation*}
and $\overline{Z}_{A}$ as the collection of $(k_1, k_2) \in \Z^2 \times \Z^2$ which satisfies 
\begin{align*}
\mathcal{T}_{k_1}^{A} & \times \mathcal{T}_{k_2}^{A} \not\subset \bigcup_{2^{100} \leq A' \leq A} 
\bigcup_{(k_1', k_2') \in \widehat{Z}_{A}} \left( \mathcal{T}_{k_1'}^{A'} \times \mathcal{T}_{k_2'}^{A'} \right),
\\ 
 \left( \tilde{\mathcal{T}}_{k_1}^{A} \times \tilde{\mathcal{T}}_{k_2}^{A} \right) & \cap \left( 
\tilde{G}_{N_1, L_1} \times \tilde{G}_{N_2, L_2} \right) \cap \mathcal{A}  \cap \left( 
(\tilde{\mathcal{K}})^c \cap (\tilde{\mathcal{K}}')^c \right) \not= 
\emptyset. 
\end{align*}
\end{definition}
\begin{lemma}[{\cite[Lemma~3.7,~p.~17]{Kinoshita2019}}]
\label{lemma8.8}
For fixed $k_1 \in \Z^2$, the number of $k_2 \in \Z^2$ such that 
$(k_1, k_2) \in \widehat{Z}_{A}$ is less than $2^{1000}$. Furthermore, the same claim holds true if we replace $\widehat{Z}_{A}$ by $\overline{Z}_{A}$.
\end{lemma}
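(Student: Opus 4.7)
The plan is to fix $k_1$, i.e.\ fix $(\xi_1,\eta_1) \in \mathcal{T}_{k_1}^A$ modulo an error of size $A^{-1}N_1$, and to bound the number of tiles $\mathcal{T}_{k_2}^A$ with $(k_1,k_2) \in \widehat{Z}_{A}$ by studying the algebraic zero sets of $\Phi(k_1,\cdot)$ and $F(k_1,\cdot)$ in the $(\xi_2,\eta_2)$-plane. Direct differentiation yields $\nabla_{k_2}\Phi = (\xi_1(2\xi_2+\xi_1),\eta_1(2\eta_2+\eta_1))$ with generic size $\sim N_1^2$, and $\nabla_{k_2}F = (\eta_1+2\eta_2,\xi_1+2\xi_2)$ with generic size $\sim N_1$. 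Hence the transition bands $\{|\Phi|\sim A^{-1}N_1^3\}$ and $\{|F|\sim A^{-1}N_1^2\}$ are strips of transverse width $\sim A^{-1}N_1$, i.e.\ one tile side. Since $(k_1,k_2)\in Q_A = R_A\setminus R_{A/2}$ simultaneously forces one of $|\Phi|\geq A^{-1}N_1^3$, $|F|\geq A^{-1}N_1^2$ to hold everywhere on the tile and produces a companion point in the enclosing $(A/2)$-tile where both quantities are $\lesssim A^{-1}N_1^3$ resp.\ $\lesssim A^{-1}N_1^2$, the gradient estimates place $k_2$ in the $O(A^{-1}N_1)$-neighbourhood of $\{\Phi=0\}\cup\{F=0\}$.

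The main step is to identify the tangential degeneracies of those level sets with the six distinguished slopes built into $\mathcal{K}_0,\mathcal{K}_1,\mathcal{K}_2$ and their reflections $\mathcal{K}_0',\mathcal{K}_1',\mathcal{K}_2'$. Regarding $\Phi(k_1,\cdot)$ as a cubic in $(\xi_2,\eta_2)$, one factorises it along the critical directions and analyses the places where either $\nabla_{k_2}\Phi$ vanishes or where the tangent of $\{\Phi=0\}$ becomes parallel to that of $\{F=0\}$ (so the two defining curves fail to be transverse). A direct algebraic computation recovers exactly the slopes $(\sqrt{2}-1)^{4/3}$ and $(\sqrt{2}+1)^{2/3}(\sqrt{2}\pm\sqrt{3})$ together with their $\xi\leftrightarrow\eta$ swaps, matching the defining strips of $\mathcal{K}_i$ and $\mathcal{K}_i'$ up to the admissible width $2^{-20}N_1$. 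Consequently, the restriction $\mathcal{T}_{k_1}^A\times\mathcal{T}_{k_2}^A\not\subset \mathcal{K}\cup\mathcal{K}'$ enforced in the definition of $\widehat{Z}_A$ rules out every such pathology.

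On the complement of $\mathcal{K}\cup\mathcal{K}'$ the curves $\{\Phi=0\}$ and $\{F=0\}$ are smooth graphs with bounded curvature at the scale $A^{-1}N_1$, and their portions in $\{|k_2|\sim N_2\}$ split into a bounded number of monotone arcs. Each arc meets only $O(1)$ adjacent $A$-tiles of side $A^{-1}N_1$, giving an absolute bound on the number of admissible $k_2$; the loose constant $2^{1000}$ absorbs every numerical factor. The statement for $\overline{Z}_A$ follows from the same transversality picture: its defining conditions force the tile to avoid every $\widehat{Z}_{A'}$ with $A'\leq A$ and still to intersect the non-degenerate region $(\tilde{\mathcal{K}})^c\cap(\tilde{\mathcal{K}}')^c$, so the local count remains $O(1)$. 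The principal obstacle is the explicit algebraic matching between the tangential degeneracies of $\Phi=0$ together with $F=0$ and the constants $(\sqrt{2}\pm 1)^{4/3}$, $(\sqrt{2}+1)^{2/3}(\sqrt{3}\pm\sqrt{2})$ appearing in $\mathcal{K}_i,\mathcal{K}_i'$; once that is established, the counting step reduces to the standard Whitney-tube argument.
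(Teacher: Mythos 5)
The paper itself does not prove this lemma; it is imported verbatim from \cite[Lemma~3.7]{Kinoshita2019}, so there is no internal proof here to compare against. Evaluating your argument on its own merits: the set-up is on the right track (both $\Phi$ and $F$ become small on the tile pair, the distinguished slopes in $\mathcal{K}_i,\mathcal{K}_i'$ flag the degenerate directions), but the final counting step contains a genuine gap.

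You conclude that ``the gradient estimates place $k_2$ in the $O(A^{-1}N_1)$-neighbourhood of $\{\Phi=0\}\cup\{F=0\}$'' and then bound the number of admissible $k_2$ by claiming ``each arc meets only $O(1)$ adjacent $A$-tiles of side $A^{-1}N_1$.'' Neither statement yields the lemma. A monotone arc of $\{\Phi(\xi_1,\eta_1,\cdot)=0\}$ traversing $\{|k_2|\sim N_2\}$ has extent $\sim N_1$ and therefore meets $\sim A$ tiles of side $A^{-1}N_1$, not $O(1)$; so the $O(A^{-1}N_1)$-neighbourhood of the \emph{union} of the level curves is far too large a set, and your argument as written gives a bound of order $A$ rather than $O(1)$. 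What the membership $(k_1,k_2)\in Q_A=R_A\setminus R_{A/2}$ actually enforces is stronger. Since the enclosing $(A/2)$-tile pair is not in $Z_{A/2}$, there is a point of it where $|\Phi|<2A^{-1}N_1^3$ \emph{and} a point where $|F|<2A^{-1}N_1^2$; the gradient bounds $|\nabla_{k_2}\Phi|\lesssim N_1^2$, $|\nabla_{k_2}F|\lesssim N_1$ propagate these to the whole tile pair, so $\mathcal{T}_{k_2}^A$ lies in the $O(A^{-1}N_1)$-neighbourhood of the \emph{intersection} $\{\Phi(\xi_1,\eta_1,\cdot)=0\}\cap\{F(\xi_1,\eta_1,\cdot)=0\}$. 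The exclusion of $\mathcal{K}\cup\mathcal{K}'$ is precisely what guarantees that these two curves (both conics in $(\xi_2,\eta_2)$ for fixed $(\xi_1,\eta_1)$) intersect transversally with angle bounded below and share no common component, so the intersection is $O(1)$ points and each point carries $O(1)$ tiles of side $A^{-1}N_1$. You name the transversality degeneracy but never run this intersection count; replacing the union by the intersection is the missing idea. The one-sentence disposal of the $\overline{Z}_A$ case has the same problem and would need to be argued, since those tile pairs are the leftover ones \emph{not} captured by any $Q_{A'}$ and require a separate treatment.
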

We show \eqref{est02-prop8.1} under the assumption $(\ell_1,\ell_2) \in (\mathcal{K})^c \cap (\mathcal{K}')^c$.
\begin{proof}[Proof of \eqref{est02-prop8.1} for the case $(\ell_1,\ell_2) \in (\mathcal{K})^c \cap (\mathcal{K}')^c$.]
By the definitions of $\widehat{Z}_A$ and $ \overline{Z}_{A_0}$, we see that the set 
$\left( 
\tilde{G}_{N_1, L_1} \times \tilde{G}_{N_2, L_2} \right) \cap \mathcal{A}  \cap 
(\tilde{\mathcal{K}})^c \cap (\tilde{\mathcal{K}}')^c $ is contained in
\begin{equation*}
\bigcup_{2^{10} \leq A \leq N_1} 
\bigcup_{(k_1, k_2) \in \widehat{Z}_A} 
 \left( \tilde{\mathcal{T}}_{k_1}^{A} \times \tilde{\mathcal{T}}_{k_2}^{A} \right)
 \cup \bigcup_{(k_1, k_2) \in \overline{Z}_{N_1}} 
 \left( \tilde{\mathcal{T}}_{k_1}^{N_1} \times \tilde{\mathcal{T}}_{k_2}^{N_1} \right).
\end{equation*}
For short, we use
\begin{equation*}
I_A^{k_1,k_2} = \left| \int_{*}  g_1|_{\tilde{\mathcal{T}}_{k_1}^A} (\tau_1,\ell_1) 
g_2|_{\tilde{\mathcal{T}}_{k_2}^A} (\tau_2,\ell_2) 
g_3 (\tau_3,\ell_3) (d \tilde{\sigma}_1)_\lambda (d \tilde{\sigma}_2)_\lambda \right|.
\end{equation*}
It is observed that
\begin{align*}
&\quad \left| \int_{*}  g_1 (\tau_1,\ell_1) 
g_2(\tau_2,\ell_2) 
g_3(\tau_3,\ell_3) (d \sigma_1)_\lambda (d \sigma_2)_\lambda \right| \\
& \lesssim \sum_{2^{10} \leq A \leq N_1} 
\sum_{(k_1, k_2) \in \widehat{Z}_A} I_{A}^{k_1,k_2} + 
\sum_{(k_1, k_2) \in \overline{Z}_{N_1}} I_{N_1}^{k_1,k_2}.
\end{align*}
For the former term, since $N_1 \sim N_2 \sim N_3$, by employing Proposition \ref{prop8.6} and Lemma \ref{lemma8.8}, we get
\begin{align*}
& \sum_{2^{10} \leq A \leq N_1} 
\sum_{(k_1, k_2) \in \widehat{Z}_A} I_{A}^{k_1,k_2} \\
& \lesssim 
\sum_{2^{10} \leq A \leq N_1}  L_{\min}^{\frac{1}{2}} 
\bigl( A^{-\frac{1}{2}} N_1^{-\frac{1}{2}} L_{\max}^{\frac{1}{2}} + \LR{A^{\frac{1}{2}} N_1^{-1} L_{\max}^{\frac{1}{2}}} \bigr) \\
&\quad \times \sum_{(k_1, k_2) \in \widehat{Z}_A} 
\|g_1|_{\tilde{\mathcal{T}}_{k_1}^A} \|_{L_\tau^2 L^2_{(d\ell)_\lambda}} \| g_2|_{\tilde{\mathcal{T}}_{k_2}^A} \|_{L_\tau^2 L^2_{(d\ell)_\lambda}} \| g_3 \|_{L_\tau^2 L^2_{(d \ell)_\lambda}}\\
& \lesssim (\log N_3)  L_{\min}^{\frac{1}{2}} \LR{N_1^{-\frac{1}{2}} L_{\max}^{\frac{1}{2}}}
\|g_1 \|_{L_\tau^2 L^2_{(d \ell)_\lambda}} \| g_2 \|_{L_\tau^2 L^2_{(d \ell)_\lambda}} \| g_3 \|_{L_\tau^2 L^2_{(d \ell)_\lambda}}.
\end{align*}
For the latter term, it follows from Lemma \ref{lemma8.4} with $M \sim \lambda^2$ and Lemma \ref{lemma8.8} that
\begin{equation*}
\sum_{(k_1, k_2) \in \overline{Z}_{N_1}} I_{N_1}^{k_1,k_2} \lesssim 
L_{\min}^{\frac{1}{2}} \|g_1 \|_{L_\tau^2 L^2_{(d \ell)_\lambda}} \| g_2 \|_{L_\tau^2 L^2_{(d \ell)_\lambda}} \| g_3 \|_{L_\tau^2 L^2_{(d \ell)_\lambda}},
\end{equation*}
which completes the proof.
\end{proof}

Next we prove the estimate \eqref{est02-prop8.1} for $(\ell_1,\ell_2) \in \left( \mathcal{K} \cup \mathcal{K}' \right)$. In this case, the almost one-to-one correspondence of 
$(k_1, k_2) \in \tilde{Z}_{A}$ does not hold. Therefore, we need to introduce another decomposition. 
We note that, by exchanging the roles of $\ell_{i,1}$ and $\ell_{i,2}$ with $i=1,2$, once the estimate 
\eqref{est02-prop8.1} is verified for the case 
$ (\ell_1,\ell_2) \in \mathcal{K}$, one can obtain the same estimate for $(\ell_1,\ell_2) \in \mathcal{K}'$. For the same reason, it suffices to show the estimate \eqref{est02-prop8.1} for the case $(\ell_1,\ell_2) \in (\mathcal{K}_1 \cup \mathcal{K}_2 ) \times \mathcal{K}_0$. 
\begin{definition}
Let $m=(n, z) \in \N  \times \Z$. We define the monotone increasing sequence $\{ a_{A,n} \}_{n \in \N} $ as 
\begin{equation*}
a_{A,1} = 0, \qquad a_{A,n+1} = a_{A,n} + \frac{N_1}{\sqrt{(n+1)A}}.
\end{equation*}
and sets $\mathcal{R}_{A,m,1}$, 
$\mathcal{R}_{A,m,2}$ as follows:
\begin{align*}
\mathcal{R}_{A,m,1} = &
\left\{ (\xi, \eta)  \in \R^2  \, \left| \, 
\begin{aligned} & a_{A,n} \leq | \eta-   ( \sqrt{2}+ 1 )^{\frac{2}{3}} (\sqrt{2} + \sqrt{3} ) \xi | 
< a_{A,n+1}, \\ 
& z A^{-1}  N_1 \leq \eta-( \sqrt{2}+ 1 )^{\frac{2}{3}}\xi < (z+1) A^{-1}N_1
 \end{aligned} \right.
\right\},\\
\mathcal{R}_{A,m,2} = &
\left\{ (\xi, \eta)  \in \R^2  \, \left| \, 
\begin{aligned} & a_{A,n} \leq | \eta +  ( \sqrt{2}+ 1 )^{\frac{2}{3}} (\sqrt{3} - \sqrt{2} ) \xi | 
< a_{A,n+1}, \\ 
& z A^{-1}  N_1 \leq \eta-( \sqrt{2}+ 1 )^{\frac{2}{3}}\xi < (z+1) A^{-1}N_1
 \end{aligned} \right.
\right\},\\
\tilde{\mathcal{R}}_{A,m,1} = & \R \times \mathcal{R}_{A,m,1}, \quad 
\tilde{\mathcal{R}}_{A,m,2} =  \R \times \mathcal{R}_{A,m,2}.
\end{align*}
We will perform the Whitney type decomposition by using the above sets 
instead of simple square tiles. 
We define for $i=1,2$ that
\begin{align*}
M_{A,i}^1 & = \left\{ (m, k) \in (\N \times \Z) \times \Z^2 \ \left| \ 
\begin{aligned} & |\Phi(\xi_1, \eta_1, \xi_2, \eta_2)| \geq A^{-1} N_1^3 \ \\ 
&\textnormal{for any} \ (\xi_1, \eta_1) \in \mathcal{R}_{A,m,i} \  \textnormal{and} \
(\xi_2, \eta_2) \in \mathcal{T}_{k}^A 
 \end{aligned} \right.
\right\},\\
M_{A,i}^2 & = \left\{ (m, k) \in (\N \times \Z) \times \Z^2 \ \left| \ 
\begin{aligned} & |F(\xi_1, \eta_1, \xi_2, \eta_2)| \geq A^{-1} N_1^3 \ \\ 
&\textnormal{for any} \ (\xi_1, \eta_1) \in \mathcal{R}_{A,m,i} \  \textnormal{and} \
(\xi_2, \eta_2) \in \mathcal{T}_{k}^A 
 \end{aligned} \right.
\right\},\\
M_{A,i} & = M_{A,i}^1 \cup M_{A,i}^2 \subset  (\N \times \Z) \times \Z^2, \\
R_{A,i} & = \bigcup_{(m, k) \in M_{A,i}} \mathcal{R}_{A,m,i} \times 
\mathcal{T}_{k}^A \subset \R^2 \times \R^2.
\end{align*}
Furthermore, we define ${M}_{A,i}' \subset M_{A,i}$ as the collection of $(m,k) \in \N \times \Z$ such that 
\begin{equation*}
\mathcal{R}_{A,m,i} \times 
\mathcal{T}_{k}^A \subset \bigcup_{2^{10} \leq A' <A} R_{A', i}.
\end{equation*}
By using ${M}_{A,i}'$, we define
\begin{align*}
Q_{A,i} = 
\begin{cases}
R_{A,i} \setminus {\displaystyle \bigcup_{(m,k) \in {M}_{A,i}'}} (\mathcal{R}_{A,m,i} \times 
\mathcal{T}_{k}^A) \ & \textnormal{for} \   A \geq 2^{11},\\
 \quad \  R_{2^{10},i}  \, \quad \qquad \qquad \qquad \qquad & \textnormal{for} \  A = 2^{10},
\end{cases}
\end{align*}
and $\tilde{M}_{A,i} = M_{A,i} \setminus {M}_{A,i}'$. Clearly, the followings hold.
\begin{equation*}
\bigcup_{(m,k) \in \tilde{M}_{A,i}} \mathcal{R}_{A,m,i} \times 
\mathcal{T}_{k}^A = Q_{A,i}, \quad 
\bigcup_{2^{10} \leq A \leq A_0} Q_{A,i} = R_{A_0,i},
\end{equation*}
where $A_0 \geq 2^{10}$ is dyadic. 
Lastly, we define
\begin{align*}
\widehat{Z}_{A,i} & = \{ (m, k) \in \tilde{M}_{A,i} \, | \, 
(  \tilde{\mathcal{R}}_{A,m,i} \times 
\tilde{\mathcal{T}}_{k}^A ) \cap \left( 
\tilde{G}_{N_1, L_1} \times \tilde{G}_{N_2, L_2} \right) \cap ( \tilde{\mathcal{K}}_i \times 
\tilde{\mathcal{K}}_0 ) \not= 
\emptyset \},\\
\overline{Z}_{A,i} & = \{ (m, k) \in  M_{A,i}^c \, | \, 
(  \tilde{\mathcal{R}}_{A,m,i} \times 
\tilde{\mathcal{T}}_{k}^A ) \cap \left( 
\tilde{G}_{N_1, L_1} \times \tilde{G}_{N_2, L_2}  \right) \cap ( \tilde{\mathcal{K}}_i \times 
\tilde{\mathcal{K}}_0 )  \not= 
\emptyset \},
\end{align*}
where $M_{A,i}^c = (\N \times \Z) \setminus M_{A,i}$. We easily see that
\begin{equation*}
 \left( 
\tilde{G}_{N_1, L_1} \times \tilde{G}_{N_2, L_2}  \right) \cap ( \tilde{\mathcal{K}}_i \times 
\tilde{\mathcal{K}}_0  ) \subset \bigcup_{(m,k) \in \widehat{Z}_{A,i}} (  \tilde{\mathcal{R}}_{A,m,i} \times 
\tilde{\mathcal{T}}_{k}^A ) \cup \bigcup_{(m,k) \in \overline{Z}_{A,i}} (  \tilde{\mathcal{R}}_{A,m,i} \times 
\tilde{\mathcal{T}}_{k}^A ).
\end{equation*}
\end{definition}
\begin{lemma}[{\cite[Lemma~3.9,~p.~26]{Kinoshita2019}}]
\label{lemma8.10}
Let $i=1,2$. For fixed $m \in \N \times \Z$, the number of $k \in \Z^2$ such that 
$(m, k) \in \widehat{Z}_{A,i}$ is less than $2^{1000}$. On the other hand, for 
fixed $k \in \Z^2$, the number of $m \in \N \times \Z$ such that 
$(m, k) \in \widehat{Z}_{A,i}$ is less than $2^{1000}$. 
Furthermore, the claim holds true if we replace $\widehat{Z}_{A,i}$ by $\overline{Z}_{A,i}$ in 
the above statements.
\end{lemma}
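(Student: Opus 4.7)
My plan is to adapt the argument behind Lemma \ref{lemma8.8} (originally Lemma 3.7 in \cite{Kinoshita2019}) to the curved parallelogram slabs $\mathcal{R}_{A,m,i}$ in place of square tiles $\mathcal{T}_k^A$. The first step is to extract the geometric content of the condition $(m,k) \in \widehat{Z}_{A,i}$. By the Whitney-type minimality $(m,k) \in \tilde{M}_{A,i} = M_{A,i} \setminus M_{A,i}'$, the set $\mathcal{R}_{A,m,i} \times \mathcal{T}_k^A$ is not fully covered at any smaller scale $A' < A$, so there exists a point in it that lies outside $R_{A/2,i}$. At such a point both $|\Phi| \lesssim A^{-1} N_1^3$ and $|F| \lesssim A^{-1} N_1^2$ are realized. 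Since $|\nabla \Phi| \lesssim N_1^2$ and $|\nabla F| \lesssim N_1$ on frequencies of size $N_1$, and the tile pair has diameter $\lesssim A^{-1} N_1$ in each variable, these two smallness bounds persist throughout $\mathcal{R}_{A,m,i} \times \mathcal{T}_k^A$, up to a harmless constant.

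The second step reduces the counting to a geometric problem. For fixed $m$, freeze $(\xi_1, \eta_1) \in \mathcal{R}_{A,m,i}$ and ask for the $(\xi_2, \eta_2)$ that can occur. The admissible region is cut out by the two simultaneous constraints above. Compute the Jacobian of $(\Phi, F)$ with respect to $(\xi_2, \eta_2)$: a direct calculation shows it is proportional to a product of factors involving $\xi_1 \eta_2 - \xi_2 \eta_1$ and $F$ itself. The enforced avoidance $\mathcal{K} \cup \mathcal{K}'$ in the definition of $\widehat{Z}_{A,i}$ (through the intersection with $\tilde{\mathcal{K}}_i \times \tilde{\mathcal{K}}_0$, which is complementary to the unrestricted transversality regime) guarantees this Jacobian is bounded below by $\gtrsim N_1^3$ uniformly on the relevant region. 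The implicit function theorem then pins $(\xi_2, \eta_2)$ into a set of diameter $\lesssim A^{-1} N_1$, which meets only $O(1)$ tiles $\mathcal{T}_k^A$ of side $A^{-1} N_1$.

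For the reciprocal direction (fix $k$, count $m$), the same chain of implications is run in reverse. The point is that the slabs $\mathcal{R}_{A,m,i}$ are engineered so that in the direction of near-cancellation (the critical line $\eta = (\sqrt{2}+1)^{2/3}(\sqrt{2}\pm\sqrt{3})\xi$) their width $N_1/\sqrt{(n+1)A}$ matches the thickness to which the simultaneous vanishing of $\Phi$ and $F$ pins $(\xi_1, \eta_1)$ once $(\xi_2, \eta_2)$ is fixed; and the transverse width $A^{-1}N_1$ matches the orthogonal resolution. Hence only $O(1)$ index pairs $m = (n,z)$ can be compatible with a single $k$. Finally, the statement for $\overline{Z}_{A,i}$ requires no Whitney minimality: since $(m,k) \in M_{A,i}^c$ is compatible with $\tilde{G}_{N_1,L_1} \times \tilde{G}_{N_2,L_2}$, one has $|\Phi| \leq A^{-1} N_1^3$ and $|F| \leq A^{-1} N_1^2$ somewhere in the pair by the definition of $M_{A,i}$ itself, and the same Jacobian analysis produces the same $O(1)$ bound.

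The main obstacle is the second paragraph: verifying that, on the explicit complement $(\mathcal{K}\cup\mathcal{K}')^c$ enforced by $\widehat{Z}_{A,i}$ through $\tilde{\mathcal{K}}_i \times \tilde{\mathcal{K}}_0$, the $(\xi_2,\eta_2)$-Jacobian of $(\Phi, F)$ is genuinely of size $N_1^3$. This is exactly the algebraic content of the symmetrization used for \eqref{eq:SymmetrizedZakharovKuznetsov} and is carried out in \cite[Lem.~3.9]{Kinoshita2019}; I would import that calculation verbatim, the only adjustment being to track the rescaling $(\xi,\eta)\mapsto M(\xi,\eta)$ used to pass from \eqref{est01-prop8.1} to \eqref{est02-prop8.1}, so that the widths $N_1/\sqrt{(n+1)A}$ are calibrated consistently with the tiling $\mathcal{T}_k^A$ in the image lattice $M(\mathbb{Z}^2/\lambda)$.
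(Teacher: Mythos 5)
You have a conceptual reversal in the central step. The set $\widehat{Z}_{A,i}$ is defined by intersecting with $\tilde{\mathcal{K}}_i \times \tilde{\mathcal{K}}_0$, which is a piece of $\mathcal{K}$ --- the \emph{degenerate} region. You describe this as the ``enforced avoidance $\mathcal{K}\cup\mathcal{K}'$'' and conclude that the Jacobian $\partial_{(\xi_2,\eta_2)}(\Phi,F) = \xi_1(\xi_1+2\xi_2)^2 - \eta_1(\eta_1+2\eta_2)^2$ is ``bounded below by $\gtrsim N_1^3$ uniformly.'' This is exactly backwards: the critical lines $\eta = (\sqrt{2}-1)^{4/3}\xi$, $\eta = (\sqrt{2}+1)^{2/3}(\sqrt{2}\pm\sqrt{3})\xi$ defining $\mathcal{K}_0,\mathcal{K}_1,\mathcal{K}_2$ are chosen so that this Jacobian degenerates on $\mathcal{K}_i\times\mathcal{K}_0$. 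If the Jacobian were uniformly $\gtrsim N_1^3$ there, the square-tile argument behind Lemma~\ref{lemma8.8} would already give the count and the slabs $\mathcal{R}_{A,m,i}$ would be unnecessary. The whole point of the second decomposition, with the non-uniform widths $N_1/\sqrt{(n+1)A}$ along the critical direction, is that in this region only a first-order pinning in the transverse direction survives while the longitudinal direction is controlled only to second order; the parabolic spacing $a_{A,n+1}-a_{A,n} \sim N_1/\sqrt{nA}$ is calibrated to this Hessian, not to a gradient bound. Your last paragraph gestures at this for the reciprocal count but contradicts the Jacobian claim you use for the forward count, and neither direction is carried out.

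Beyond the geometric error, the proposal is not a self-contained proof: the paragraph you flag as ``the main obstacle'' is precisely the content of the lemma, and you resolve it by ``import[ing] that calculation verbatim'' from \cite[Lem.~3.9]{Kinoshita2019}. Since the paper itself states Lemma~\ref{lemma8.10} as a citation to that same lemma, deferring to the cited computation does not constitute an independent proof. If you wish to actually prove the counting bound, you should fix $m$, use that $(m,k)\in\tilde M_{A,i}$ forces both $|\Phi|\lesssim A^{-1}N_1^3$ and $|F|\lesssim A^{-1}N_1^2$ somewhere on the tile pair, restrict $(\xi_1,\eta_1)$ to $\mathcal{R}_{A,m,i}\cap\mathcal{K}_i$, expand $\Phi$ and $F$ along and transverse to the critical line for $(\xi_2,\eta_2)\in\mathcal{K}_0$, and verify that these two constraints confine $(\xi_2,\eta_2)$ to a region whose longitudinal extent is $\lesssim A^{-1}N_1$ using the non-vanishing of the relevant second-order coefficient there, so that $O(1)$ tiles $\mathcal{T}_k^A$ are met. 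The reciprocal direction (fix $k$, count $m$) is where the parabolic widths $N_1/\sqrt{(n+1)A}$ are essential, and requires an analogous second-order expansion in $(\xi_1,\eta_1)$.
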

We establish \eqref{est02-prop8.1} under the case $(\ell_1,\ell_2) \in (\mathcal{K}_1 \cup \mathcal{K}_2 ) \times \mathcal{K}_0$. 
To avoid redundancy, here we treat only the case 
$(\ell_1,\ell_2) \in \mathcal{K}_1 \times \mathcal{K}_0$. 
\begin{proof}[Proof of \eqref{est02-prop8.1} for the case $(\ell_1,\ell_2) \in\mathcal{K}_1 \times \mathcal{K}_0$.]
The strategy of the proof is the same as that for the case $(\ell_1,\ell_2) \in (\mathcal{K})^c \cap (\mathcal{K}')^c$. 
Let us write
\begin{equation*}
I_A^{m,k} = \left| \int_{*}  g_1|_{\tilde{\mathcal{R}}_{A,m,1}} (\tau_1,\ell_1) 
g_2|_{\tilde{\mathcal{T}}_{k}^A} (\tau_2,\ell_2) 
g_3 (\tau_3,\ell_3) (d \tilde{\sigma}_1)_\lambda (d \tilde{\sigma}_2)_\lambda \right|.
\end{equation*}
By the definitions of $\widehat{Z}_{A,1}$ and $\overline{Z}_{A,1}$, we observe that 
\begin{align*}
& \left| \int_{*}  g_1 (\tau_1,\ell_1) 
g_2(\tau_2,\ell_2) 
g_3(\tau_3,\ell_3) (d \tilde{\sigma}_1)_\lambda (d \tilde{\sigma}_2)_\lambda \right| \\
& \lesssim \sum_{2^{10} \leq A \leq N_1} 
\sum_{(m,k) \in \widehat{Z}_A} I_{A}^{m,k} + 
\sum_{(m,k) \in \overline{Z}_{N_1}} I_{N_1}^{m,k}.
\end{align*}
As in the proof for the case $(\ell_1,\ell_2) \in (\mathcal{K})^c \cap (\mathcal{K}')^c$, 
the first term is estimated by Proposition \ref{prop8.6} and Lemma \ref{lemma8.10}, and 
the second is estimated by Lemmas \ref{lemma8.4} and \ref{lemma8.10}. 
We omit the details.
\end{proof}

Next we show \eqref{est02-prop8.1} for the case (Ib) $|\sin \angle(\ell_1,\ell_2)| \ll 1$. This case requires an angular decomposition. 
We cover the unit circle with the sets
\begin{equation*}
\Theta_j^A = [\frac{\pi}{A}(j-2), \frac{\pi}{A}(j+2)] \cup [ - \pi + \frac{\pi}{A}(j-2), - \pi + \frac{\pi}{A}(j+2)].
\end{equation*} 
Angles from these sets give rise to the following covering of the plane:
\begin{equation*}
\mathfrak{D}_j^A = \{ r(\cos \theta, \sin \theta) \in \R^2 \, | \, \theta \in \Theta_j^A \text{ and } r \in [0,\infty) \}.
\end{equation*}
We set $\tilde{\mathfrak{D}}_j^A = \R \times \mathfrak{D}_j^A$.
 
Recall that it is assumed $\max(|\ell_{1,1}+\ell_{1,2}|, |\ell_{2,1}+\ell_{2,2}|) \geq 2^{-6} N_1$, which means 
$(\ell_1,\ell_2) \notin  {\mathfrak{D}}_{2^9 \times 3}^{2^{11}} \times {\mathfrak{D}}_{2^9 \times 3}^{2^{11}}$. 
The proof is divided into two cases:\\
Case 1. $(\ell_1,\ell_2) \in \displaystyle{\bigcup_{{\substack{0 \leq j \leq 2^{11}-1\\{j \not= 0, 2^9 \times 3,2^{10}}}}}  \left(
{\mathfrak{D}}_{j}^{2^{11}} \times {\mathfrak{D}}_{j}^{2^{11}}\right)}$,\\
Case 2. $(\ell_1,\ell_2) \in \left( {\mathfrak{D}}_{0}^{2^{11}} \times {\mathfrak{D}}_{0}^{2^{11}} \right) 
\cup \left( {\mathfrak{D}}_{2^{10}}^{2^{11}} \times {\mathfrak{D}}_{2^{10}}^{2^{11}} \right)$.

We begin with Case 1. 
It suffices to show \eqref{est02-prop8.1} under the assumption 
$(\ell_1,\ell_2) \in {\mathfrak{D}}_{j}^{2^{11}} \times {\mathfrak{D}}_{j}^{2^{11}}$ 
with fixed $j \not= 0, 2^9 \times 3,2^{10}$. 
Further, since $|\sin \angle(\ell_1,\ell_2)| \ll 1$, we may assume 
\begin{equation}
(\ell_1,\ell_2) \in \bigcup_{2^{100} \leq A \leq N_1} \ \bigcup_{\tiny{\substack{(j_1,j_2) \in J_{A}^{j}\\ 16 \leq |j_1 - j_2|\leq 32}}} 
{\mathfrak{D}}_{j_1}^A \times {\mathfrak{D}}_{j_2}^A 
\cup \bigcup_{\tiny{\substack{(j_1,j_2) \in J_{N_1}^{j}\\|j_1 - j_2|\leq 16}}} 
{\mathfrak{D}}_{j_1}^{N_1} \times {\mathfrak{D}}_{j_2}^{N_1},\label{AngularDecompositionj}
\end{equation}
where 
\begin{equation*}
J_{A}^{j} = \{ (j_1, j_2) \, | \, 0 \leq j_1,j_2 \leq A -1, \ \left( {\mathfrak{D}}_{j_1}^A \times {\mathfrak{D}}_{j_2}^A \right) \subset {\mathfrak{D}}_{j}^{2^{11}} \times {\mathfrak{D}}_{j}^{2^{11}} \}.
\end{equation*}
\begin{proposition}\label{prop8.11}
Let $2^{100} \leq A \leq N_1$. 
Assume that $1 \ll N_3 \lesssim N_2 \leq N_1$, $L_{\textnormal{med}} \leq N_1^2$, 
$(j_1,j_2) \in J_{A}^{j}$ such that $16 \leq |j_1 - j_2|\leq 32$ and 
\eqref{Assumptionfunctiong}. Then we have
\begin{equation*}
\begin{split}
&\quad \left| \int_{*}  g_1|_{\tilde{{\mathfrak{D}}}_{j_1}^A} (\tau_1,\ell_1) 
g_2|_{\tilde{\mathfrak{D}}_{j_2}^A} (\tau_2,\ell_2) 
g_3 (\tau_3,\ell_3) (d \tilde{\sigma}_1)_\lambda (d \tilde{\sigma}_2)_\lambda \right| \\
& \lesssim L_{\min}^{\frac{1}{2}} 
\overline{C}(A, N_1, N_3, L_{\max})
\|g_1|_{\tilde{{\mathfrak{D}}}_{j_1}^A} \|_{L_\tau^2 L^2_{(d \ell)_\lambda}} \| g_2|_{\tilde{\mathfrak{D}}_{j_2}^A} \|_{L_\tau^2 L^2_{(d \ell)_\lambda}} \| g_3 \|_{L_\tau^2 L^2_{(d \ell)_\lambda}},
\end{split}
\end{equation*}
where
\begin{equation*}
\overline{C}(A, N_1, N_3, L_{\max}) = 
\begin{cases}
N_1^{-1} N_3^{\frac{1}{2}} L_{\max}^{\frac{1}{2}} \qquad & \textnormal{for} \ \, N_3 \geq 2^{30} A^{-1} N_1,\\
\LR{A^{\frac{1}{2}} N_1^{-1} L_{\max}^{\frac{1}{2}}} \qquad &  \textnormal{for} \ \, N_3 \leq 2^{30} A^{-1} N_1.
\end{cases}
\end{equation*}
\end{proposition}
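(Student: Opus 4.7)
My strategy is to verify the hypotheses of Proposition \ref{nlw-ZKSymmetrized} for the pair of angular sectors at hand and then extract the two declared bounds by choosing the transversality parameter appropriately in each regime of $N_3$, supplementing with Lemma \ref{lemma8.4} where necessary. The angular separation $16 \leq |j_1 - j_2| \leq 32$ at resolution $A$ gives
\[
|\xi_1 \eta_2 - \xi_2 \eta_1| = |\ell_1|\,|\ell_2|\,|\sin \angle(\ell_1,\ell_2)| \sim A^{-1} N_1^2,
\]
and the exclusion $j \notin \{0, 3 \cdot 2^9, 2^{10}\}$ keeps both central directions away from the coordinate axes, so $|\xi_i|, |\eta_i| \gtrsim N_1$ and $|F(\ell_1,\ell_2)| \gtrsim N_1^2$. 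Multiplying, the transversality condition \eqref{AssumptionSurfaceTransversalitySymmetrized} holds with parameter $A$. A harmless further decomposition of each sector $\mathfrak{D}_{j_i}^A \cap \{|\ell| \sim N_1\}$ into sub-pieces of diameter $\ll A^{-1} N_1$ (to control the variation of $\nabla \tilde \psi = (3\xi^2, 3\eta^2)$, which is bounded by $6 N_1 \cdot \mathrm{diam}$) secures the regularity condition \eqref{AssumptionSurfaceRegularitySymmetrized}.

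In the regime $N_3 \leq 2^{30} A^{-1} N_1$ I would apply Proposition \ref{nlw-ZKSymmetrized} directly with transversality parameter $A$. Under the standing assumption $L_{\mathrm{med}} \leq N_1^2$, the resulting constant is
\[
L_{\min}^{1/2} \langle A L_{\max} N_1^{-2}\rangle^{1/2} = L_{\min}^{1/2} \langle A^{1/2} N_1^{-1} L_{\max}^{1/2}\rangle,
\]
which is precisely $\overline{C}$ in this regime. The summation over the decomposition pieces is handled by almost-orthogonality in the spirit of Lemmas \ref{lemma8.8} and \ref{lemma8.10}, with at most a logarithmic overhead absorbed into constants.

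The regime $N_3 \geq 2^{30} A^{-1} N_1$ is the main obstacle, since the target $L_{\min}^{1/2} N_1^{-1} N_3^{1/2} L_{\max}^{1/2}$ is strictly sharper than the Loomis-Whitney bound with parameter $A$ once $A > N_3$. My plan is two-pronged. When $A \leq N_3$, I would perform a secondary angular decomposition of each $\mathfrak{D}_{j_i}^A$ at the finer scale $1/N_3$; pairs of sub-sectors then generically carry transversality $N_3$, and Proposition \ref{nlw-ZKSymmetrized} with this improved parameter yields $L_{\min}^{1/2} \langle N_3^{1/2} N_1^{-1} L_{\max}^{1/2}\rangle$, matching the target. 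When $A > N_3$, the sector width $1/A$ is already finer than $1/N_3$, so this refinement is unavailable; instead I would exploit the constraint $\ell_3 = \ell_1 + \ell_2$, which forces $\mathrm{supp}_k g_3$ to lie in a strip of thickness $\sim N_1/A$ along the common direction and hence has cardinality $\lesssim \lambda^2 N_3 \cdot N_1/A$, feed this into Lemma \ref{lemma8.4}, and interpolate the resulting modulation-independent bound against the Loomis-Whitney contribution with parameter $A$ to close the range of $L_{\max}$. The delicate point --- preserving almost-orthogonality of sub-sector pairs across dyadic angular scales and executing the interpolation without incurring losses worse than a constant or a logarithm --- is the main technical step of the proof.
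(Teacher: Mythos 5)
Your handling of the regime $N_3 \leq 2^{30}A^{-1}N_1$ is essentially the paper's: the angular separation $|j_1-j_2|\sim 1$ at scale $A$ gives $|\xi_1\eta_2-\xi_2\eta_1|\gtrsim A^{-1}N_1^2$, the exclusion $j\neq 0,2^{10}$ gives $|\xi_1\eta_1|\gtrsim N_1^2$, and one then verifies $|F(\xi_1,\eta_1,\xi_2,\eta_2)|\gtrsim N_1^2$ (the paper writes $F=2(\xi_1\eta_1+\xi_2(\eta_1+\eta_2))+(\xi_1\eta_2-\xi_2\eta_1)$ and observes the error terms are $O(N_1N_3)+O(A^{-1}N_1^2)\ll N_1^2$ in this regime), so that Proposition \ref{nlw-ZKSymmetrized} applies with parameter $A$. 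One small correction: Proposition \ref{prop8.11} is a bound for a \emph{single fixed pair} of sectors, so no summation and no invocation of the almost-orthogonality Lemmas \ref{lemma8.8}, \ref{lemma8.10} is needed at this stage; those enter only in the later step where the sector-wise bounds are summed.

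For $N_3 \geq 2^{30}A^{-1}N_1$ your plan has a genuine gap. The secondary angular decomposition at scale $1/N_3$ cannot improve the transversality: the separation between $\mathfrak{D}_{j_1}^A$ and $\mathfrak{D}_{j_2}^A$ is already pinned at $\sim A^{-1}$ by the hypothesis $|j_1-j_2|\sim 1$, and refining each sector into thinner sub-sectors does not move the two families further apart, so $|\xi_1\eta_2-\xi_2\eta_1|$ remains $\sim A^{-1}N_1^2$ (not $\sim N_3^{-1}N_1^2$) and the Loomis-Whitney parameter stays $A$. Likewise, the interpolation idea in the sub-case $A>N_3$ does not close on its own: Cauchy-Schwarz (Lemma \ref{lemma8.4}) with $\#\operatorname{supp}_k g_3\lesssim \lambda^2 A^{-1}N_1N_3$ yields $(A^{-1}N_1N_3)^{1/2}L_{\min}^{1/2}$, and matching the target $N_1^{-1}N_3^{1/2}L_{\min}^{1/2}L_{\max}^{1/2}$ requires $L_{\max}\gtrsim A^{-1}N_1^3$ --- a lower bound on the maximal modulation, not extra transversality. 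This is precisely the mechanism the paper uses and your proposal never touches: for $(\xi_1,\eta_1)\in\mathfrak{D}_{j_1}^A$, $(\xi_2,\eta_2)\in\mathfrak{D}_{j_2}^A$ with $N_3\geq 2^{30}A^{-1}N_1$ and $j\neq 3\cdot2^9$, the resonance satisfies $|\Phi(\xi_1,\eta_1,\xi_2,\eta_2)|\gtrsim A^{-1}N_1^3$ (split into the nearly parallel case $|(\cos\theta_1,\sin\theta_1)-(\cos\theta_2,\sin\theta_2)|\lesssim A^{-1}$, where $|\cos^3\theta_1+\sin^3\theta_1|\gtrsim 1$ gives $|\Phi|\gtrsim N_1^3$, and the nearly anti-parallel case, where $|\Phi|\gtrsim r_1r_2|r_1-r_2|$ and $N_3\geq 2^{30}A^{-1}N_1$ forces $|r_1-r_2|\gtrsim A^{-1}N_1$). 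This forces $L_{\max}\gtrsim A^{-1}N_1^3$, and then Lemma \ref{lemma8.4} with $P\sim\lambda^2A^{-1}N_1N_3$ finishes. Without the resonance lower bound on $\Phi$ the estimate in this regime cannot be recovered.
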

\begin{proof}
First we assume $N_3 \geq 2^{30}A^{-1} N_1$. In this case, for 
$(\xi_1, \eta_1) \times  (\xi_2 ,\eta_2) \in {\mathfrak{D}}_{j_1}^A \times {\mathfrak{D}}_{j_2}^A$ a simple calculation yields
\begin{equation*}
|\Phi (\xi_1, \eta_1, \xi_2 ,\eta_2)| \gtrsim A^{-1} N_1^3.
\end{equation*}
To see this, we put $r_1=|(\xi_1,\eta_1)|$, $r_2 =|(\xi_2,\eta_2)|$. $\theta_1$, $\theta_2 \in [0,2\pi)$ denote angular variables defined by 
\begin{equation*}
(\xi_1,\eta_1) = r_1 (\cos \theta_1, \sin \theta_1), \quad 
(\xi_2,\eta_2) = r_2 (\cos \theta_2, \sin \theta_2).
\end{equation*}
Recall that $(\xi_1,\eta_1) \times (\xi_2,\eta_2) \notin {\mathfrak{D}}_{2^9 \times 3}^{2^{11}} \times {\mathfrak{D}}_{2^9 \times 3}^{2^{11}}$ is assumed. 
Thus without loss of generality, we may 
assume that $(\xi_1,\eta_1) \notin  {\mathfrak{D}}_{2^{9} \times 3}^{2^{11}}$ which provides $|\cos \theta_1 + \sin \theta_1| = \sqrt{2} |\sin (\theta_1 + \pi/4)| > 2^{-11}\pi$. 
We deduce from the assumption $|j_1-j_2| \leq 32$ that 
$ |(\cos \theta_1,\sin \theta_1 ) - (\cos \theta_2, \sin \theta_2)| \leq 2^{7} A^{-1}$ or $ |(\cos \theta_1,\sin \theta_1 ) + (\cos \theta_2, \sin \theta_2)| \leq 2^{7} A^{-1}$. 
If $ |(\cos \theta_1,\sin \theta_1 ) - (\cos \theta_2, \sin \theta_2)| \leq 2^{7} A^{-1}$, it is observed that 
\begin{align*}
|\Phi(\xi_1,\eta_1,\xi_2,\eta_2) |= & |\xi_1\xi_2 (\xi_1 +\xi_2) + \eta_1 \eta_2(\eta_1+\eta_2)|\\
\geq  & r_1 r_2 (r_1 + r_2)|\cos^3 \theta_1+\sin^3 \theta_1 |- 2^{9} A^{-1}r_1 r_2 (r_1 + r_2)\\
= &  r_1 r_2 (r_1 + r_2) (1- 2^{-1}\sin 2\theta_1) | \cos \theta_1 + \sin \theta_1 | \\
&\qquad  -2^{9} A^{-1}r_1 r_2 (r_1 + r_2).
\end{align*}
Clearly, this implies 
$|\Phi(\xi_1,\eta_1,\xi_2,\eta_2) | \gtrsim N_1^3$. 
Similarly, for the case $ |(\cos \theta_1,\sin \theta_1 ) + (\cos \theta_2, \sin \theta_2)| \leq 2^{7} A^{-1}$, we calculate
\begin{align*}
|\Phi(\xi_1,\eta_1,\xi_2,\eta_2)|= & |\xi_1\xi_2 (\xi_1 +\xi_2) + \eta_1 \eta_2(\eta_1+\eta_2)|\\
\geq  & r_1 r_2 (r_1 - r_2)|\cos^3 \theta_1+\sin^3 \theta_1 |- 2^{10} A^{-1}N_1^3\\ 
\geq & 2^{-13} r_1 r_2 (r_1 - r_2) -  2^{10} A^{-1}N_1^3.
\end{align*}
Then it suffices to show $|r_1-r_2| \geq 2^{27} A^{-1} N_1$. Since $N_3 \geq 2^{30} A^{-1}N_1$, without loss of generality, we can assume 
$|\xi_1 + \xi_2| = |r_1 \cos \theta_1 + r_2 \cos \theta_2| \geq 2^{28} A^{-1} N_1$. We see
\begin{align*}
|r_1-r_2| & \geq |r_1 \cos \theta_1 - r_2 \cos \theta_1|\\
& \geq  |r_1 \cos \theta_1 + r_2 \cos \theta_2| - 2^{10} A^{-1} N_1\\
& \geq 2^{27} A^{-1} N_1.
\end{align*}
This completes the proof of $|\Phi (\xi_1, \eta_1, \xi_2 ,\eta_2)| \gtrsim A^{-1} N_1^3$ which yields 
$L_{\max} \gtrsim A^{-1} N_1^3$. 
Consequently, it follows from Lemma \ref{lemma8.4} with $P \sim \lambda^2 A^{-1} N_1 N_3$ that 
\begin{equation*}
\begin{split}
&\quad \left| \int_{*}  g_1|_{\tilde{{\mathfrak{D}}}_{j_1}^A} (\tau_1,\ell_1) 
g_2|_{\tilde{\mathfrak{D}}_{j_2}^A} (\tau_2,\ell_2) 
g_3 (\tau_3,\ell_3) (d \tilde{\sigma}_1)_\lambda (d \tilde{\sigma}_2)_\lambda \right| \\
& \lesssim  N_1^{-1} N_3^{\frac{1}{2}} L_{\min}^{\frac{1}{2}}  L_{\max}^{\frac{1}{2}}
\|g_1|_{\tilde{{\mathfrak{D}}}_{j_1}^A} \|_{L_\tau^2 L^2_{(d \ell)_\lambda}} \| g_2|_{\tilde{\mathfrak{D}}_{j_2}^A} \|_{L_\tau^2 L^2_{(d \ell)_\lambda}} \| g_3 \|_{L_\tau^2 L^2_{(d \ell)_\lambda}}.
\end{split}
\end{equation*}

Next we assume $N_3 \leq 2^{30}A^{-1} N_1$. 
This case is treated by Proposition \ref{nlw-ZKSymmetrized}. 
To utilize Proposition \ref{nlw-ZKSymmetrized}, we only need to show
\begin{equation*}
\bigl|
(\xi_1 \eta_2 - \xi_2 \eta_1) \, 
\bigl(\xi_1 \eta_2 + \xi_2 \eta_1 + 2(\xi_1 \eta_1 + \xi_2 \eta_2)\bigr)
\bigr| \gtrsim A^{-1} N_1^4,
\end{equation*}
for 
$(\xi_1, \eta_1) \times  (\xi_2 ,\eta_2) \in {\mathfrak{D}}_{j_1}^A \times {\mathfrak{D}}_{j_2}^A$. 
Since $|\xi_1 \eta_2 - \xi_2 \eta_1| \gtrsim A^{-1}N_1^2$ is clear, it suffices to show 
$|\xi_1 \eta_2 + \xi_2 \eta_1 + 2(\xi_1 \eta_1 + \xi_2 \eta_2)| \gtrsim N_1^2$. 
Let us recall
\begin{equation*}
(\ell_1,\ell_2) \notin \left( {\mathfrak{D}}_{0}^{2^{11}} \times {\mathfrak{D}}_{0}^{2^{11}} \right) 
\cup \left( {\mathfrak{D}}_{2^{10}}^{2^{11}} \times {\mathfrak{D}}_{2^{10}}^{2^{11}} \right),
\end{equation*}
which suggests that we may assume $|\xi_1 \eta_1| \geq 2^{-15} N_1^2$. 
Thus it is observed that
\begin{align*}
|\xi_1 \eta_2 + \xi_2 \eta_1 + 2(\xi_1 \eta_1 + \xi_2 \eta_2)| & \geq 
2|\xi_1 \eta_1 + \xi_2 (\eta_1 + \eta_2)| - |\xi_1 \eta_2 - \xi_2 \eta_1|\\
& \geq 2^{-14} N_1^2 - 2^3 N_1 N_3 - 2^{10} A^{-1} N_1^2 \geq 2^{-15} N_1^2.
\end{align*}
This completes the proof. 
\end{proof}
\begin{proof}[Proof of \eqref{est02-prop8.1} for the case $(\ell_1,\ell_2) \in {\mathfrak{D}}_{j}^{2^{11}} \times {\mathfrak{D}}_{j}^{2^{11}}$ with fixed $j \not= 0, 2^9 \times 3,2^{10}$.]
By using
\begin{equation*}
I_A^{j_1,j_2} = \left| \int_{*}  g_1|_{\tilde{{\mathfrak{D}}}_{j_1}^A} (\tau_1,\ell_1) 
g_2|_{\tilde{\mathfrak{D}}_{j_2}^A} (\tau_2,\ell_2) 
g_3 (\tau_3,\ell_3) (d \tilde{\sigma}_1)_\lambda (d \tilde{\sigma}_2)_\lambda \right|,
\end{equation*}
since \eqref{AngularDecompositionj}, we can write
\begin{align*}
&\quad \left| \int_{*}  g_1|_{\tilde{{\mathfrak{D}}}_{j}^{2^{11}}} (\tau_1,\ell_1) 
g_2|_{\tilde{{\mathfrak{D}}}_{j}^{2^{11}}}(\tau_2,\ell_2) 
g_3(\tau_3,\ell_3) (d \tilde{\sigma}_1)_\lambda (d \tilde{\sigma}_2)_\lambda \right| \\
& \lesssim \bigl(\sum_{2^{100} \leq A \leq 2^{30} N_1/N_3} + \sum_{2^{30}N_1/N_3 \leq A \leq N_1} \bigr)
\sum_{\substack{(j_1,j_2) \in J_{A}^{j}\\ 16 \leq |j_1 - j_2|\leq 32}} I_A^{j_1,j_2} + 
\sum_{\substack{(j_1,j_2) \in J_{N_1}^{j}\\|j_1 - j_2|\leq 16}} I_{N_1}^{j_1,j_2}.
\end{align*}
For the first term, it should be noted that we may assume $A \geq N_1/N_3$, otherwise 
$I_A^{j_1,j_2}$ with $16 \leq |j_1-j_2|$ vanishes. 
By using Proposition \ref{prop8.11}, we obtain
\begin{align*}
& \bigl(\sum_{N_1/N_3 \leq A \leq 2^{30} N_1/N_3} + \sum_{2^{30}N_1/N_3 \leq A \leq N_1} \bigr)
\sum_{\substack{(j_1,j_2) \in J_{A}^{j}\\ 16 \leq |j_1 - j_2|\leq 32}} I_A^{j_1,j_2}\\
& \lesssim L_{\min}^{\frac{1}{2}}\bigl( \sum_{A \sim N_1/N_3} 
\LR{A^{\frac{1}{2}} N_1^{-1} L_{\max}^{\frac{1}{2}}} 
+ \sum_{2^{30}N_1/N_3 \leq A \leq N_1} N_1^{-1} N_3^{\frac{1}{2}} L_{\max}^{\frac{1}{2}}  \bigr)
\prod_{i=1}^3 \|g_i \|_{L_\tau^2 L^2_{(d \ell)_\lambda}} \\
& \lesssim L_{\min}^{\frac{1}{2}} (
\LR{N_1^{-\frac{1}{2}} N_3^{-\frac{1}{2}} L_{\max}^{\frac{1}{2}}} 
+ (\log N_1)N_1^{-1} N_3^{\frac{1}{2}} L_{\max}^{\frac{1}{2}}  \bigr)
\prod_{i=1}^3 \|g_i \|_{L_\tau^2 L^2_{(d \ell)_\lambda}}.
\end{align*}

For the second term, we find $|\Phi (\xi_1, \eta_1, \xi_2 ,\eta_2)| \gtrsim  N_1^2 N_3$ as in the proof of 
Proposition \ref{prop8.11}. 
Then, by using Lemma \ref{lemma8.4} with $P \sim \lambda^2 N_3$, we can verify \eqref{est02-prop8.1}.
\end{proof}
Let us turn to Case 2: 
$(\ell_1,\ell_2) \in \left( {\mathfrak{D}}_{0}^{2^{11}} \times {\mathfrak{D}}_{0}^{2^{11}} \right) 
\cup \left( {\mathfrak{D}}_{2^{10}}^{2^{11}} \times {\mathfrak{D}}_{2^{10}}^{2^{11}} \right)$. 
We only consider the case $(\ell_1,\ell_2) \in {\mathfrak{D}}_{0}^{2^{11}} \times {\mathfrak{D}}_{0}^{2^{11}}$ to reduce redundancy. 
\begin{proposition}\label{prop8.12}
Let $\max(2^{100}, \, N_1/N_3) \leq A \leq N_1$. 
Assume that $1 \ll N_3 \lesssim N_2 \leq N_1$, $L_{\textnormal{med}} \leq N_1^2$, 
$(j_1,j_2) \in J_{A}^{0}$ such that $16 \leq |j_1 - j_2|\leq 32$ and 
\eqref{Assumptionfunctiong}. Then we have
\begin{equation*}
\begin{split}
& \left| \int_{*}  g_1|_{\tilde{{\mathfrak{D}}}_{j_1}^A} (\tau_1,\ell_1) 
g_2|_{\tilde{\mathfrak{D}}_{j_2}^A} (\tau_2,\ell_2) 
g_3 (\tau_3,\ell_3) (d \tilde{\sigma}_1)_\lambda (d \tilde{\sigma}_2)_\lambda \right| \\
& \lesssim L_{\min}^{\frac{1}{2}} 
\widehat{C}(A, N_1, N_3, L_{\max})
\|g_1|_{\tilde{{\mathfrak{D}}}_{j_1}^A} \|_{L_\tau^2 L^2_{(d \ell)_\lambda}} \| g_2|_{\tilde{\mathfrak{D}}_{j_2}^A} \|_{L_\tau^2 L^2_{(d \ell)_\lambda}} \| g_3 \|_{L_\tau^2 L^2_{(d \ell)_\lambda}},
\end{split}
\end{equation*}
where
\begin{equation*}
\widehat{C}(A, N_1, N_3, L_{\max}) = 
\begin{cases}
N_1^{-1} N_3^{\frac{1}{2}} L_{\max}^{\frac{1}{2}} \qquad & \textnormal{for} \ \, N_3 \geq 2^{30} A^{-1} N_1,\\
N_3^{\ep} \LR{ N_1^{-\frac{1}{2}} L_{\max}^{\frac{1}{2}}} \qquad &  \textnormal{for} \ \, N_3 \leq 2^{30} A^{-1} N_1.
\end{cases}
\end{equation*}
\end{proposition}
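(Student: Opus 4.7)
The proof will follow the same template as Proposition \ref{prop8.11}, treating the sub-cases $N_3 \geq 2^{30} A^{-1} N_1$ and $N_3 \leq 2^{30} A^{-1} N_1$ separately, but the analysis must accommodate the fact that throughout the angular region $\mathfrak{D}_0^{2^{11}} \times \mathfrak{D}_0^{2^{11}}$ one has $|\xi_i \eta_i| \ll N_1^2$, so that the symmetrized transversality factor $F(\xi_1, \eta_1, \xi_2, \eta_2) = \xi_1 \eta_2 + \xi_2 \eta_1 + 2(\xi_1 \eta_1 + \xi_2 \eta_2)$ degenerates compared to the setting of Proposition \ref{prop8.11}.

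For the first regime $N_3 \geq 2^{30} A^{-1} N_1$, the argument of Proposition \ref{prop8.11} transfers verbatim: the exclusion of the sector $\mathfrak{D}_{2^9 \cdot 3}^{2^{11}}$ in that proof was invoked only to ensure $|\cos \theta + \sin \theta| \gtrsim 1$, which in the present setting is automatic since $\theta$ lies in a $\pi/2^9$-neighborhood of $0$ or $\pi$. Hence one obtains $|\Phi(\xi_1, \eta_1, \xi_2, \eta_2)| \gtrsim A^{-1} N_1^3$ in both the nearly parallel and nearly anti-parallel sub-cases, whereupon Lemma \ref{lemma8.4} applied with $P \sim \lambda^2 A^{-1} N_1 N_3$ (the effective support size produced by the constraint $|k_1 + k_2| \sim N_3$ within the angular sub-sector) gives the constant $N_1^{-1} N_3^{1/2} L_{\max}^{1/2}$.

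The regime $N_3 \leq 2^{30} A^{-1} N_1$ is where a genuinely new argument is needed, since Proposition \ref{nlw-ZKSymmetrized} cannot be invoked directly: the lower bound $|F| \gtrsim N_1^2$ used in Proposition \ref{prop8.11} rested on $|\xi_1 \eta_1| \geq 2^{-15} N_1^2$, which is precisely what fails in Case 2. The plan is to decompose each angular sub-sector $\mathfrak{D}_{j_i}^A$ further into dyadic strips $|\ell_{i,2}| \sim M_i$, $i=1,2$, with $M_i$ ranging dyadically from $\lambda^{-1}$ up to the sub-sector width $\sim N_1/A$. In a given $(M_1, M_2)$-strip one has $|F| \sim N_1 M_{\max}$ where $M_{\max} = \max(M_1, M_2)$, and combined with the transversal cross-product bound $|G| = |\xi_1 \eta_2 - \xi_2 \eta_1| \gtrsim A^{-1} N_1^2$ arising from the angular separation $16 \leq |j_1 - j_2| \leq 32$, Proposition \ref{nlw-ZKSymmetrized} applies with an effective transversality parameter $A' \sim A N_1/M_{\max}$. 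The strips in which $M_{\max}$ is too small to yield useful transversality are instead handled by the lattice-counting Lemma \ref{liouvilleSymmetrized}, which bounds the effective support of $g_i$ inside the thin strip, combined with the Cauchy--Schwarz estimate Lemma \ref{lemma8.4}. Summing the resulting per-strip estimates over the dyadic $(M_1, M_2)$ and using Cauchy--Schwarz to exploit the orthogonality in $\ell_{i,2}$ produces the bound $L_{\min}^{1/2} \langle N_1^{-1/2} L_{\max}^{1/2} \rangle$ up to a factor logarithmic in $N_3$, absorbed into the $N_3^\ep$ loss.

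The main obstacle is the bookkeeping required to verify that the per-strip bounds, summed over the dyadic decomposition, combine to an $A$-independent constant rather than one growing like $A^{1/2}$ as in Proposition \ref{prop8.11}. The essential point is that the transversality-based estimate improves with $M_{\max}$ exactly at the rate needed to compensate for the support-size estimate in the small-$M_{\max}$ strips, and the assumption $A \geq N_1/N_3$ together with the angular separation $|j_1 - j_2| \geq 16$ ensures that the relevant sub-sectors are nonempty and that the mean-value estimate $|F| \sim N_1 M_{\max}$ is genuinely realized; absent this balance, the argument would reduce to that of Proposition \ref{prop8.11} and the $A^{1/2}$ factor could not be removed.
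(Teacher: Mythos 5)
Your proposal captures the right high-level strategy --- a decomposition in the vertical frequency $|\ell_{i,2}|$ (equivalently, a dyadic angular decomposition at scale $A_0 = 2^{30} N_1/N_3$ rather than $A$), applying Proposition \ref{nlw-ZKSymmetrized} with the improved effective transversality parameter where $|F|$ is genuinely bounded below, and falling back on Lemma \ref{liouvilleSymmetrized} plus Lemma \ref{lemma8.4} elsewhere. This matches the paper's Cases (1)--(3). The observation that $|\cos\theta + \sin\theta|\gtrsim 1$ holds automatically in $\mathfrak{D}_0^{2^{11}}$ is correct and is what makes the $N_3 \geq 2^{30}A^{-1}N_1$ sub-case carry over from Proposition \ref{prop8.11}.

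However, there is a genuine gap in the regime $N_1^{1/2}\lesssim N_3 \ll N_1$ when the angular index lies in $\mathfrak{J}_{A_0}$, i.e., the paper's Case (4). Your key claim that ``in a given $(M_1,M_2)$-strip one has $|F|\sim N_1 M_{\max}$'' is only valid when $M_{\max}\gtrsim N_3$. When $M_{\max}\lesssim N_3$ the interaction is nearly anti-parallel with $\xi_2\approx -\xi_1$, $\eta_2\approx -\eta_1$, and a direct computation gives $F \approx \xi_1(\eta_1-\eta_2) + O(N_3^2)$, which can vanish even with $|\eta_1|\sim|\eta_2|\sim M_{\max}$ since the strip decomposition controls $|\eta_i|$ but not the difference $\eta_1-\eta_2$. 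Moreover, in this sub-sector with $|\xi_1+\xi_2|\lesssim N_3^3/N_1^2$ one also has $|\Phi|\lesssim N_3^3$, so $L_{\max}$ need not be large and the lattice-counting plus Cauchy--Schwarz bound $N_3 L_{\min}^{1/2}$ is insufficient because $N_1^{1/2}\lesssim N_3$. Neither Proposition \ref{nlw-ZKSymmetrized} nor the lattice-counting argument alone closes this regime; the paper introduces a further Whitney-type decomposition of $(\ell_1,\ell_2)$-space into rectangle tiles $\mathcal{R}_m^d$ of size $d^{-1}N_1^{-2}N_3^3\times d^{-1}N_3$, adapted to the level sets of both $\Phi$ and $F$, together with an almost-orthogonality lemma (Lemma \ref{lemma8.15}) and a per-tile bilinear estimate (Lemma \ref{lemma8.16}), and then sums dyadically in $d$. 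This extra decomposition is the heart of the proof --- the paper explicitly calls it ``the most difficult part'' --- and your sketch's concluding appeal to ``Cauchy--Schwarz to exploit the orthogonality in $\ell_{i,2}$'' does not supply the missing mechanism.
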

\begin{proof}
The case $N_3 \geq 2^{30} A^{-1} N_1$ can be handled in the same manner as in the proof of Proposition \ref{prop8.11}. 
We focus on the case $N_3 \leq 2^{30} A^{-1} N_1$, which means $A \leq 2^{30} N_1/N_3$. 
Put $A_0=2^{30}N_1/N_3$ and for a dyadic 
$K$ such that $2^{10} \leq K \leq 2^{-10}A_0$, we define
\begin{align*}
\mathfrak{J}_{A_0}^K & = \left\{ j \in \N \ | \ \frac{A_0}{K} \leq j \leq 2 \frac{A_0}{K}, \quad 
A_0- 2 \frac{A_0}{K} \leq j \leq A_0-  \frac{A_0}{K} \right\},\\
\mathfrak{J}_{A_0} & = \left\{ j \in \N \ | \ 0 \leq j \leq 2^{10}, \quad A_0- 2^{10}  \leq j \leq A_0- 1 \right\}.
\end{align*}
Since $N_1/N_3 \leq A \leq 2^{30}N_1/N_3 =A_0$, it suffices to show that for 
$j_1 \in \mathfrak{J}_{A_0}^K \cup \mathfrak{J}_{A_0}$ and $|j_1-j_2| \sim 1$, it holds
\begin{equation}
\begin{split}
&\quad \left| \int_{*}  g_1|_{\tilde{{\mathfrak{D}}}_{j_1}^{A_0}} (\tau_1,\ell_1) 
g_2|_{\tilde{\mathfrak{D}}_{j_2}^{A_0}} (\tau_2,\ell_2) 
g_3 (\tau_3,\ell_3) (d \tilde{\sigma}_1)_\lambda (d \tilde{\sigma}_2)_\lambda \right| \\
& \lesssim N_3^{\ep} L_{\min}^{\frac{1}{2}}  \LR{N_1^{-\frac{1}{2}}  L_{\max}^{\frac{1}{2}}} 
\|g_1|_{\tilde{{\mathfrak{D}}}_{j_1}^{A_0}} \|_{L_\tau^2 L^2_{(d \ell)_\lambda}} 
\| g_2|_{\tilde{\mathfrak{D}}_{j_2}^{A_0}} \|_{L_\tau^2 L^2_{(d \ell)_\lambda}} \| g_3 \|_{L_\tau^2 L^2_{(d\ell)_\lambda}}.
\end{split}\label{est01-prop8.12}
\end{equation}
We divide the proof of \eqref{est01-prop8.12} into four cases:\\
(1) $1 \ll N_3 \lesssim N_1^{\frac{1}{2}}$, $j_1 \in \mathfrak{J}_{A_0}^K$ with $1 \ll K \ll N_3$,\\
(2) $1 \ll N_3 \lesssim N_1^{\frac{1}{2}}$, $j_1 \in \mathfrak{J}_{A_0}^K \cup \mathfrak{J}_{A_0} $
 with $N_3 \lesssim K \lesssim A_0$,\\
(3) $N_1^{\frac{1}{2}} \lesssim N_3 \ll N_1$, 
$j_1 \in \mathfrak{J}_{A_0}^K$ with $1 \ll K \ll A_0$,\\
(4) $N_1^{\frac{1}{2}} \lesssim N_3 \ll N_1$, 
$j_1 \in \mathfrak{J}_{A_0}^K\cup \mathfrak{J}_{A_0} $
 with $ K \sim A_0$.\\
\textbf{\underline{Case (1)}} 
Note that $j_1 \in \mathfrak{J}_{A_0}^K$ with $|j_1-j_2| \sim 1$ implies $|\eta_1|+|\eta_2| \lesssim K^{-1}N_1$ for $(\xi_1,\eta_1) \times (\xi_2,\eta_2) \in \mathfrak{D}_{j_1}^{A_0} \times \mathfrak{D}_{j_2}^{A_0}$. 
We define the sets 
\begin{align*}
S_{a} & =\{(\tau, \ell_{(1)}, \ell_{(2)}) \in \R \times M(\Z^2 / \lambda) \, |\, |\ell_{(1)}| \leq a^{-1}N_3\},\\
\tilde{S}_{a} & =\{(\tau, \ell_{(1)}, \ell_{(2)}) \in \R \times M(\Z^2 / \lambda) \, |\, a^{-1} N_3 \leq |\ell_{(1)}| \leq 2 a^{-1}N_3\}.
\end{align*} 
First we assume $\supp g_3 \subset S_{2^{-30}K}$ and prove \eqref{est01-prop8.12} by Proposition \ref{nlw-ZKSymmetrized} with $A = A_0 K \sim  K N_1/N_3$. 
We deduce from $|j_1-j_2| \sim 1$ and $\supp g_3 \subset S_{2^{-10}K}$ that, 
after harmless decompositions, we can assume that for $i=1,2,3$, 
$\supp_k g_i$ are confined to the rectangle set 
\begin{equation*}
K_i= 
\{ (\xi, \eta) \in \R^2 \, | \, |\xi-\alpha_i|\ll K^{-1}N_3 \sim (A_0K)^{-1} N_1, \ |\eta - \beta_i| \ll 
A_0^{-1} N_1\},
\end{equation*}
with some fixed $(\alpha_i,\beta_i) \in \R^2$ such that $|\beta_i| \lesssim K^{-1}N_1$, respectively.\\
Since $\partial_{\eta} \tilde{\psi} (\xi,\eta) = 3 \eta^2$, this implies \eqref{AssumptionSurfaceRegularitySymmetrized} with $A=A_0 K$. 
Next we show the transversality condition \eqref{AssumptionSurfaceTransversalitySymmetrized}. 
It is clear that $(\xi_1,\eta_1) \times (\xi_2,\eta_2) \in \mathfrak{D}_{j_1}^{A_0} \times \mathfrak{D}_{j_2}^{A_0}$ gives $|\xi_1 \eta_2-\xi_2 \eta_1| \gtrsim A_0^{-1}N_1^2$. Furthermore, we see
\begin{align*}
|\xi_1 \eta_2 + \xi_2 \eta_1 + 2(\xi_1 \eta_1 + \xi_2 \eta_2)| & \geq 
2|\xi_1 \eta_1 + (\xi_1+\xi_2) \eta_2| - |\xi_1 \eta_2 - \xi_2 \eta_1|\\
& \geq 2^{-3} K^{-1} N_1^2 - 2^{40} K^{-2} N_1 N_3 - 2^{10} A_0^{-1} N_1^2\\
& \gtrsim K^{-1}N_1^2.
\end{align*}
Here we used $K \ll A_0$. Hence we can utilize Proposition \ref{nlw-ZKSymmetrized} with $A = A_0 K \sim  K N_1/N_3$ and obtain
\begin{align*}
&\quad \left| \int_{*}  g_1|_{\tilde{\mathfrak{D}}_{j_1}^{A_0}}(\tau_1,\ell_1)  
g_2|_{\tilde{\mathfrak{D}}_{j_2}^{A_0}}(\tau_2,\ell_2)  g_3 (\tau_3,\ell_3) (d \tilde{\sigma}_1)_\lambda (d \tilde{\sigma}_2)_\lambda \right|\\
 & \lesssim L_{\min}^{\frac{1}{2}} 
\LR{K^{\frac{1}{2}} N_1^{-\frac{1}{2}}N_3^{-\frac{1}{2}}L_{\max}^{\frac{1}{2}}} 
\|g_1 \|_{L_\tau^2 L^2_{(d \ell)_\lambda}} \| g_2 \|_{L_\tau^2 L^2_{(d \ell)_\lambda}} \| g_3 \|_{L_\tau^2 L^2_{(d \ell)_\lambda}}.
\end{align*}
Next suppose $\supp g_3 \subset (S_{2^{-30}K})^c$. Then we easily observe that $|\Phi| \geq N_1^2$ 
which, combined with Lemma \ref{lemma8.4}, provides 
the desired estimate since $N_3 \lesssim N_1^{1/2}$.\\
\textbf{\underline{Case (2)}} To avoid redundancy, we only treat the case $j_1 \in \mathfrak{J}_{A_0}^K$. 
Assume $\supp g_3 \subset S_{2^{-10}K^{2}}$. Since $N_3 \lesssim K$, we have $K^{-2} N_3^2 \lesssim 1$. 
Therefore, by Lemmas \ref{liouvilleSymmetrized} and \ref{lemma8.4}, we get
\begin{equation*}
\begin{split}
&\quad \left| \int_{*}  g_1|_{\tilde{\mathfrak{D}}_{j_1}^{A_0}}(\tau_1,\ell_1)  
g_2|_{\tilde{\mathfrak{D}}_{j_2}^{A_0}}(\tau_2,\ell_2)  g_3 (\tau_3,\ell_3) (d \tilde{\sigma}_1)_\lambda (d \tilde{\sigma}_2)_\lambda \right| \\
&\lesssim  L_{\min}^{\frac{1}{2}} 
\|g_1 \|_{L_\tau^2 L^2_{(d \ell)_\lambda}} \| g_2 \|_{L_\tau^2 L^2_{(d \ell)_\lambda}} \| g_3 \|_{L_\tau^2 L^2_{(d \ell)_\lambda}}.
\end{split}
\end{equation*}
In the case $\supp g_3 \subset \tilde{S}_{\alpha^{-1} K^2}$ with $2^{10} \leq \alpha \lesssim K^2$, we can observe that $|\Phi| \gtrsim \alpha K^{-2} N_1^2 N_3$. 
In addition, Lemma \ref{liouvilleSymmetrized} provides 
$\# \supp_{k} g_3 \lesssim \lambda^2 \alpha K^{-2} N_3^2$. Hence, by employing Lemma \ref{lemma8.4}, we have
\begin{equation*}
\begin{split}
&\quad \left| \int_{*}  g_1|_{\tilde{\mathfrak{D}}_{j_1}^{A_0}}(\tau_1,\ell_1)  
g_2|_{\tilde{\mathfrak{D}}_{j_2}^{A_0}}(\tau_2,\ell_2)  g_3 (\tau_3,\ell_3) (d \sigma_1)_\lambda (d \sigma_2)_\lambda \right| \\
&\lesssim N_3^{\frac{1}{2}} N_1^{-1} L_{\min}^{\frac{1}{2}} L_{\max}^{\frac{1}{2}}
\|g_1 \|_{L_\tau^2 L^2_{(d \ell)_\lambda}} \| g_2 \|_{L_\tau^2 L^2_{(d \ell)_\lambda}} \| g_3 \|_{L_\tau^2 L^2_{(d \ell)_\lambda}}.
\end{split}
\end{equation*}
Consequently, for $\supp g_3 \subset (S_{2^{-10}K^{2}})^c$, by summing up the above, we get
\begin{equation*}
\begin{split}
&\quad \left| \int_{*}  g_1|_{\tilde{\mathfrak{D}}_{j_1}^{A_0}}(\tau_1,\ell_1)  
g_2|_{\tilde{\mathfrak{D}}_{j_2}^{A_0}}(\tau_2,\ell_2)  g_3 (\tau_3,\ell_3) (d \tilde{\sigma}_1)_\lambda (d \tilde{\sigma}_2)_\lambda \right| \\
&\lesssim N_3^{\frac{1}{2}} N_1^{-1+\ep} L_{\min}^{\frac{1}{2}} L_{\max}^{\frac{1}{2}}
\|g_1 \|_{L_\tau^2 L^2_{(d \ell)_\lambda}} \| g_2 \|_{L_\tau^2 L^2_{(d \ell)_\lambda}} \| g_3 \|_{L_\tau^2 L^2_{(d \ell)_\lambda}}.
\end{split}
\end{equation*}
\textbf{\underline{Case (3)}} The case $\supp g_3 \subset S_{2^{-30}K}$ can be handled in the same way as in Case (1) and the case $\supp g_3 \subset (S_{2^{-30}K})^c$ is treated as in the proof of Case (2). We omit the details.\\
\textbf{\underline{Case (4)}} Since $K \sim A_0$, we treat only $j_1 \in \mathfrak{J}_{A_0}$ here. 
We will see that Case (4) is the most difficult part in the proof of Proposition \ref{prop8.12} and we need to perform an additional Whitney-type decomposition as in \cite{Kinoshita2019}.

First, we assume $\supp g_3 \subset S_{2^{-10}N_1^2/N_3^2}$. 
We introduce the Whitney-type decomposition of $\R^2 \times \R^2$ into rectangle tiles. 
\begin{definition}
Let $1 \lesssim d \lesssim N_3^2/ N_1$ 
be dyadic and $m = ( m_{(1)}, m_{(2)} ) \in \Z^2$. We define rectangle-tiles 
$\{ \mathcal{R}_m^{d}\}_{m \in \Z^2}$ whose short side is parallel to $\xi$-axis and its length is $ d^{-1} N_1^{-2}N_3^3$, long side length is $ d^{-1} N_3 $ and prisms $\{ \tilde{\mathcal{R}}_m^{d}\}_{m \in \Z^2}$ as follows:
\begin{align*}
& \mathcal{R}_m^{d} : = \{ (\xi,\eta) \in \R^2 \, | \, \xi \in d^{-1} N_1^{-2}N_3^3 
[ m_{(1)}, m_{(1)} + 1), \ \eta \in d^{-1} N_3 [ m_{(2)}, m_{(2)} + 1)  \},\\
& \tilde{\mathcal{R}}_m^{d}  : = \R \times \mathcal{R}_m^{d}.
\end{align*}
\end{definition}
\begin{definition}[Whitney type decomposition]
Let $1 \lesssim d \lesssim N_3^2/ N_1$ be dyadic and $j_1 \in \mathfrak{J}_{A_0}$. Recall that
\begin{align*}
\Phi (\xi_1, \eta_1, \xi_2 ,\eta_2) & = \xi_1 \xi_2(\xi_1 + \xi_2) + \eta_1 \eta_2 (\eta_1 + \eta_2), \\
F (\xi_1, \eta_1, \xi_2 ,\eta_2) & = \xi_1 \eta_2 +  \xi_2 \eta_1 + 2 (\xi_1 \eta_1 + \xi_2 \eta_2).
\end{align*}
We define $Z_{d,j_1,j_2}^1 $ as the set of $(m_1, m_2) \in \Z^2 \times \Z^2 $ such that
\begin{equation*}
\begin{cases}
 |\Phi(\xi_1, \eta_1, \xi_2, \eta_2)| \geq  d^{-1} N_3^3  \ \ \textnormal{for any} \ 
(\xi_1, \eta_1) \times (\xi_2,\eta_2) \in 
\mathcal{R}_{m_1}^{d} \times \mathcal{R}_{m_2}^{d}, \\
 \left( \mathcal{R}_{m_1}^{d} \times \mathcal{R}_{m_2}^{d} \right) 
\cap \left( \mathfrak{D}_{j_1}^{A_0} \times \mathfrak{D}_{j_2}^{A_0} \right) 
\not= \emptyset,\\
 |\xi_1+ \xi_2| \lesssim   N_1^{-2}N_3^3  \ \ \textnormal{for any} \ 
(\xi_1, \eta_1) \times (\xi_2,\eta_2) \in 
\mathcal{R}_{m_1}^{d} \times \mathcal{R}_{m_2}^{d}.
\end{cases}
\end{equation*}
Similarly, we define $Z_{d,j_1,j_2}^2$ 
as the set of $(k_1, k_2) \in \Z^2 \times \Z^2$ such that
\begin{equation*}
\begin{cases}
 |F(\xi_1, \eta_1, \xi_2, \eta_2)| \geq d^{-1} N_1 N_3  \ \ \textnormal{for any} \ 
(\xi_1, \eta_1) \times (\xi,\eta) \in 
\mathcal{R}_{m_1}^{d} \times \mathcal{R}_{m_2}^{d}, \\
 \left( \mathcal{R}_{m_1}^{d} \times \mathcal{R}_{m_2}^{d} \right) 
\cap \left( \mathfrak{D}_{j_1}^{A_0} \times \mathfrak{D}_{j_2}^{A_0} \right) 
\not= \emptyset,\\
 |\xi_1+ \xi_2| \lesssim   N_1^{-2}N_3^3  \ \ \textnormal{for any} \ 
(\xi_1, \eta_1) \times (\xi_2,\eta_2) \in 
\mathcal{R}_{m_1}^{d} \times \mathcal{R}_{m_2}^{d},
\end{cases}
\end{equation*}
and
\begin{equation*}
Z_{d}^{j_1,j_2} = Z_{d,j_1,j_2}^1 \, \cup \, Z_{d,j_1,j_2}^2 , 
\quad R_{d}^{j_1,j_2} = \bigcup_{(m_1, m_2) \in Z_{d}^{j_1,j_2}} 
\mathcal{R}_{m_1}^{d} \times 
\mathcal{R}_{m_2}^{d} \subset \R^2 \times \R^2.
\end{equation*}
It is clear that $d_1 \leq d_2 \Longrightarrow  R_{d_1}^{j_1,j_2} \subset  R_{d_2}^{j_1,j_2}$. 
Further, we define
\begin{equation*}
Q_{d}^{j_1,j_2} = 
\begin{cases}
R_{d}^{j_1,j_2} \setminus R_{d/2}^{j_1,j_2} \quad & \textnormal{for} \  d \geq 2^{21},\\
 \ R_{2^{20}}^{j_1,j_2}  \quad \, \qquad & \textnormal{for} \  d = 2^{20},
\end{cases}
\end{equation*}
and a set of pairs of integer coordinates $\widehat{Z}_{d}^{j_1,j_2} \subset Z_{d}^{j_1,j_2}$ as
\begin{equation*}
\bigcup_{(m_1, m_2) \in \widehat{Z}_{d}^{j_1,j_2}} \mathcal{R}_{m_1}^{d} \times 
\mathcal{R}_{m_2}^{d} = Q_{d}^{j_1,j_2}.
\end{equation*}
Clearly, $\widehat{Z}_{d}^{j_1,j_2}$ is uniquely defined and 
\begin{equation*}
d_1 \not= d_2 \Longrightarrow Q_{d_1}^{j_1,j_2} \cap Q_{d_2}^{j_1,j_2} = \emptyset, \quad 
\bigcup_{2^{20} \leq d \leq d_0} Q_{d}^{j_1,j_2} = R_{d_0}^{j_1,j_2},
\end{equation*}
where $d_0 \gtrsim 1$ is dyadic. 
Lastly, we define $\overline{Z}_{d}^{j_1,j_2}$ as the collection of $(m_1, m_2) \in \Z^2 \times \Z^2$ which satisfies
\begin{equation*}
\begin{cases}
\mathcal{R}_{m_1}^{d} \times \mathcal{R}_{m_2}^{d} \not\subset 
\displaystyle{\bigcup_{2^{20} \leq d' \leq d} 
\bigcup_{(m_1', m_2') \in \widehat{Z}_{A,d'}^{j_1,j_2}}} 
\left( \mathcal{R}_{m_1'}^{A,d'} \times \mathcal{R}_{m_2'}^{A,d'} \right),\\
 \left( \mathcal{R}_{m_1}^{d} \times \mathcal{R}_{m_2}^{d} \right) 
\cap \left( \mathfrak{D}_{j_1}^A \times \mathfrak{D}_{j_2}^A \right) 
\not= \emptyset,\\
 |\xi_1+ \xi_2| \lesssim   N_1^{-2}N_3^3  \ \ \textnormal{for any} \ 
(\xi_1, \eta_1) \times (\xi_2,\eta_2) \in 
\mathcal{R}_{m_1}^{d} \times \mathcal{R}_{m_2}^{d}.
\end{cases}
\end{equation*}
\end{definition}
The following lemma ensures the almost orthogonality of rectangle sets $\mathcal{R}_{m_1}^{d}$ and 
$\mathcal{R}_{m_2}^{d}$ such that $(m_1, m_2) \in \widehat{Z}_{d}^{j_1,j_2}$ or $(m_1, m_2) \in \overline{Z}_{d}^{j_1,j_2}$. We note that the proof is almost the same as that for Lemmas 3.7 and 3.24 in \cite{Kinoshita2019}.
\begin{lemma}\label{lemma8.15}
Let $1 \lesssim d \lesssim N_3^2/ N_1$ be dyadic and $j_1 \in \mathfrak{J}_{A_0}$. 
For fixed $m_1 \in \Z^2$, the number of $m_2 \in \Z^2$ such that 
$(m_1, m_2) \in \widehat{Z}_{d}^{j_1,j_2}$ is less than $2^{1000}$. Furthermore, the same claim holds true if we replace $\widehat{Z}_{d}^{j_1,j_2}$ by $\overline{Z}_{d}^{j_1,j_2}$.
\end{lemma}
\begin{proof}
Clearly, we can assume $d \geq 2^{100}$. 
For $(m_1, m_2) \in \widehat{Z}_{d}^{j_1,j_2}$, we can find 
$m_1'=m_1'(m_1) \in \Z^2$ and $m_2'=m_2'(m_2) \in \Z^2$ which satisfy 
$\mathcal{R}_{m_1}^{d} \subset \mathcal{R}_{m_1'}^{d/2}$ and 
$\mathcal{R}_{m_2}^{d} \subset \mathcal{R}_{m_2'}^{d/2}$, respectively. 
In view of the definitions, $(m_1, m_2) \in \widehat{Z}_{d}^{j_1,j_2}$ implies that there exist 
$(\bar{\xi}_1, \bar{\eta}_1)$, $(\tilde{\xi}_1, \tilde{\eta}_1) \in \mathcal{R}_{m_1'}^{d/2}$, 
$(\bar{\xi}_2, \bar{\eta}_2)$, $(\tilde{\xi}_2, \tilde{\eta}_2) \in \mathcal{R}_{m_2'}^{d/2}$ which satisfy
\begin{equation}
|\Phi(\bar{\xi}_1, \bar{\eta}_1, \bar{\xi}_2, \bar{\eta}_2)| \leq 2 d^{-1} N_3^3  \quad \textnormal{and} \quad 
 |F (\tilde{\xi}_1, \tilde{\eta}_1, \tilde{\xi}_2 ,\tilde{\eta}_2)| \leq 2 d^{-1} N_1 N_3.\label{lemma3.6-est001}
\end{equation}
Define $(\xi_1', \eta_1')$ as the center of $\mathcal{R}_{m_1}^{d}$. Since $j_1 \in \mathfrak{J}_{A_0}$, 
$|j_1-j_2| \sim 1$ and 
$( \mathcal{R}_{m_1}^{d} \times \mathcal{R}_{m_2}^{d} ) 
\cap ( \mathfrak{D}_{j_1}^A \times \mathfrak{D}_{j_2}^A ) 
\not= \emptyset$, we have $|\eta_1| + |\eta_2| \leq N_3$ for any 
$(\xi_1,\eta_1) \times  (\xi_2,\eta_2) \in \mathcal{R}_{m_1}^{d} \times \mathcal{R}_{m_2}^{d} $. 
Therefore, for $(\xi_2,\eta_2) \in \mathcal{R}_{m_2}^{d}$, 
\eqref{lemma3.6-est001} implies 
\begin{equation*}
|\Phi(\xi_1', \eta_1', \xi_2,\eta_2)| \leq 2^5 d^{-1} N_3^3  \quad \textnormal{and} \quad 
 |F (\xi_1', \eta_1', \xi_2,\eta_2)| \leq 2^5 d^{-1} N_1 N_3.
\end{equation*}
Consequently, it suffices to see that there exist 
$\tilde{m}_1, \tilde{m}_2 \in \Z^2$ such that 
$\mathcal{R}_{\tilde{m}_1}^{2^{-20}d} \cup \mathcal{R}_{\tilde{m}_2}^{2^{-20}d}$ contains
\begin{equation*}
\left\{ (\xi_2, \eta_2)  \in \mathfrak{D}_{j_2}^{A_0} \, \left| \, 
\begin{aligned} & |\Phi(\xi_1', \eta_1', \xi_2,\eta_2)| \leq 2^5 d^{-1} N_3^3, \\ 
 &  |F (\xi_1', \eta_1', \xi_2,\eta_2)| \leq 2^5  d^{-1} N_1 N_3,
 \end{aligned} \ |\xi_1'+ \xi_2| \lesssim N_1^{-2}N_3^3 \right.
\right\}.
\end{equation*}
Now let us perform the transformation $\xi_2' = \xi_2 + \xi_1'/2$, $\eta_2' = \eta_2 + \eta_1'/2$ and see that
\begin{align*}
|\Phi(\xi_1', \eta_1', \xi_2, \eta_2) |& = |\xi_1' \xi_2(\xi_1' + \xi_2) + \eta_1' \eta_2 (\eta_1' + \eta_2)|  
\leq  2^5 d^{-1} N_3^3, \\
 |F (\xi_1', \eta_1', \xi_2,\eta_2)| & = |\xi_1' \eta_2 +  \xi_2 \eta_1' + 2 (\xi_1' \eta_1' + \xi_2 \eta_2)| 
\leq 2^5 d^{-1} N_1 N_3,
\end{align*}
are rewritten as
\begin{align}
\tilde{\Phi} (\xi_2' ,\eta_2') & := 
\left| \xi_1' {\xi_2'}^2 + \eta_1' {\eta_2'}^2 - \frac{{\xi_1'}^3 + {\eta_1'}^3}{4} \right| 
\leq 2^5 d^{-1} N_3^3 , \label{info03-lemma3.23} \\
\tilde{F} (\xi_2' ,\eta_2') & := \left| \frac{3}{2} \, \xi_1' \, \eta_1' + 2 \,  \xi_2'  \, \eta_2' \right| 
\leq 2^5 d^{-1} N_1 N_3,\label{info04-lemma3.23}
\end{align}
respectively. It should be noted that $|\xi_1'+ \xi_2| \lesssim N_1^{-2}N_3^3$ provides 
$|\xi_2'| \geq |\xi_2|/2 - |\xi_1' + \xi_2|/2 \geq 2^{-2} N_1$. 
We compute that
\begin{equation}
\eqref{info04-lemma3.23} \Longrightarrow  \left| \eta_2' + \frac{3  \xi_1' \, \eta_1'}{4 \xi_2'} \right|  \leq \frac{2^4 d^{-1} N_1 N_3}{|\xi_2'|} 
\leq 2^{6} d^{-1} N_3.\label{est003-lemma3.23}
\end{equation}
Since $|\eta_1'| \leq  N_3$, \eqref{info03-lemma3.23} and \eqref{est003-lemma3.23} yield
\begin{align}
& \Bigl| \xi_1' {\xi_2'}^2 + \eta_1' {\eta_2'}^2 - \frac{{\xi_1'}^3 + {\eta_1'}^3}{4} \Bigr| 
\leq 2^5 d^{-1} N_3^3 \notag\\
 \xLongrightarrow[ \ ]{\eqref{est003-lemma3.23}} & 
\Bigl| \xi_1' {\xi_2'}^2 + \eta_1' \frac{9 {\xi_1'}^2 {\eta_1'}^2}{16 {\xi_2'}^2} - \frac{{\xi_1'}^3 + {\eta_1'}^3}{4} \Bigr| 
\leq 2^{8} d^{-1} N_3^3.\label{est004-lemma3.23}
\end{align}
We define 
\begin{equation*}
G(\xi_2') := \xi_1' {\xi_2'}^2 +  \frac{9 {\xi_1'}^2 {\eta_1'}^3}{16 {\xi_2'}^2} - 
\frac{{\xi_1'}^3 + {\eta_1'}^3}{4}.
\end{equation*}
It follows from $2^{-2} N_1 \leq |\xi_1'|$, $|\xi_2'| \leq 2 N_1$ and $|\eta_1'| \leq N_3$ that
\begin{equation*}
\left| \left( \frac{dG}{d\xi_2'} \right) 
 (\xi_2') \right| = \left| \frac{2 \xi_1'}{{\xi_2'}^3} \left( {\xi_2'}^4 -  \frac{9 {\xi_1'} {\eta_1'}^3}{16} 
\right) \right| \geq 2^{-5}N_1^2.
\end{equation*}
This and \eqref{est004-lemma3.23} verify that there exist constants 
$c_1(\xi_1',\eta_1')$, $c_2(\xi_1', \eta_1') \in \R$ such that
\begin{equation*}
\min_{i=1,2} |\xi_2' - c_i (\xi_1', \eta_1')| \leq 2^{15} d^{-1} N_1^{-2} N_3^3.
\end{equation*}
This and \eqref{est003-lemma3.23} imply that there exist constants 
$c_1'(\xi_1',\eta_1')$, $c_2'(\xi_1', \eta_1') \in \R$  such that
\begin{equation*}
\min_{i=1,2} |\eta_2' - c_i' (\xi_1', \eta_1')| \leq 2^{7} d^{-1} N_3,
\end{equation*}
which completes the proof.
\end{proof}
\begin{lemma}\label{lemma8.16}
Let $1 \lesssim d \lesssim N_3^2/ N_1$ be dyadic, $j_1 \in \mathfrak{J}_{A_0}$. 
Assume that $|j_1-j_2| \sim 1$ and $(m_1, m_2) \in \widehat{Z}_{d}^{j_1,j_2}$. Then we have
\begin{align*}
& \left| \int_{*}  g_1|_{\tilde{\mathcal{R}}_{m_1}^{d}} (\tau_1,\ell_1) 
g_2|_{\tilde{\mathcal{R}}_{m_2}^{d}} (\tau_2,\ell_2) g_3 (\tau_3,\ell_3) (d \tilde{\sigma}_1)_\lambda (d \tilde{\sigma}_2)_\lambda \right|\\
& \lesssim  L_{\min}^{\frac{1}{2}} 
\LR{(d^{\frac{1}{2}} N_3^{-1}+d^{-\frac{1}{2}} N_1^{-1} N_3^{\frac{1}{2}}) L_{\max}^{\frac{1}{2}}}
\|g_1 \|_{L_\tau^2 L^2_{(d \ell)_\lambda}} \| g_2 \|_{L_\tau^2 L^2_{(d \ell)_\lambda}} \| g_3 \|_{L_\tau^2 L^2_{(d \ell)_\lambda}}.
\end{align*}
\end{lemma}
\begin{proof}
First we assume $|\Phi(\xi_1, \eta_1, \xi_2, \eta_2)| \geq  d^{-1} N_3^3$ for any $(\xi_1, \eta_1) \times (\xi_2,\eta_2) \in 
\mathcal{R}_{m_1}^{d} \times \mathcal{R}_{m_2}^{d}$. 
By Lemma \ref{liouvilleSymmetrized}, we have $\displaystyle{\sup_{\alpha \in \R^2} \# \tilde{R}_{d^{-1}N_3, d^{-1}N_1^{-2}N_3^3}^{\alpha} \lesssim \lambda^2 d^{-2}N_1^{-2}N_3^4}$. Therefore, it follows from Lemma \ref{lemma8.4} that
\begin{equation*}
\begin{split}
& \left| \int_{*}   g_1|_{\tilde{\mathcal{R}}_{m_1}^{d}} (\tau_1,\ell_1) 
g_2|_{\tilde{\mathcal{R}}_{m_2}^{d}} (\tau_2,\ell_2) g_3 (\tau_3,\ell_3) (d \tilde{\sigma}_1)_\lambda (d \tilde{\sigma}_2)_\lambda \right|\\
& \lesssim 
d^{-\frac{1}{2}} N_1^{-1} N_3^{\frac{1}{2}} L_{\min}^{\frac{1}{2}} L_{\max}^{\frac{1}{2}} \prod_{i=1}^3
\|g_i \|_{L_\tau^2 L^2_{(d \ell)_\lambda}}.
\end{split}
\end{equation*}
For the case $ |F(\xi_1, \eta_1, \xi_2, \eta_2)| \geq d^{-1} N_1 N_3$, 
it follows from Proposition \ref{nlw-ZKSymmetrized} with $A = d A_0 N_1/N_3  \sim  d N_1^2/N_3^2$ that 
\begin{equation*}
\begin{split}
&\quad \left| \int_{*}  g_1|_{\tilde{\mathcal{R}}_{m_1}^{d}} (\tau_1,\ell_1) 
g_2|_{\tilde{\mathcal{R}}_{m_2}^{d}} (\tau_2,\ell_2) g_3 (\tau_3,\ell_3) (d \tilde{\sigma}_1)_\lambda (d \tilde{\sigma}_2)_\lambda \right| \\
 &\lesssim  L_{\min}^{\frac{1}{2}} 
\LR{d^{\frac{1}{2}} N_3^{-1} L_{\max}^{\frac{1}{2}}} \prod_{i=1}^3 \|g_i \|_{L_\tau^2 L^2_{(d \ell)_\lambda}}.
\end{split}
\end{equation*}
This completes the proof.
\end{proof}
We turn to show \eqref{est01-prop8.12} for Case (4) under the assumption $\supp g_3 \subset S_{2^{-10}N_1^2/N_3^2}$. 
By the properties of 
$\widehat{Z}_{d}^{j_1,j_2}$ and $\overline{Z}_{d}^{j_1,j_2}$, we observe that
\begin{align*}
&\quad \textnormal{(LHS) of \eqref{est01-prop8.12}} \\
& \leq  \sum_{1 \lesssim d \lesssim  N_3^2/ N_1} \sum_{(m_1, m_2) \in \widehat{Z}_{d}^{j_1,j_2}} 
\left| \int_{*}  g_1|_{\tilde{\mathcal{R}}_{m_1}^{d}} (\tau_1,\ell_1) 
g_2|_{\tilde{\mathcal{R}}_{m_2}^{d}} (\tau_2,\ell_2) g_3 (\tau_3,\ell_3) (d \tilde{\sigma}_1)_\lambda (d \tilde{\sigma}_2)_\lambda \right|\\
&\; + \sum_{(m_1, m_2) \in \overline{Z}_{N_3^2/N_1}^{j_1,j_2}} 
\left| \int_{*}  g_1|_{\tilde{\mathcal{R}}_{m_1}^{N_3^2/N_1}}(\tau_1,\ell_1)  
g_2|_{\tilde{\mathcal{R}}_{m_2}^{N_3^2/N_1}} (\tau_2,\ell_2)  g_3 (\tau_3,\ell_3) (d \tilde{\sigma}_1)_\lambda (d\tilde{\sigma}_2)_\lambda \right|\\
& =: \sum_{1 \lesssim d \lesssim  N_3^2/ N_1} \sum_{(m_1, m_2) \in \widehat{Z}_{d}^{j_1,j_2}}  I_1 + \sum_{(m_1, m_2) \in \overline{Z}_{N_3^2/N_1}^{j_1,j_2}}   I_2.
\end{align*}
The first is estimated by Lemmas \ref{lemma8.15}, \ref{lemma8.16} as
\begin{equation*}
\sum_{1 \lesssim d \lesssim  N_3^2/ N_1} \sum_{(m_1, m_2) \in \widehat{Z}_{d}^{j_1,j_2}}  I_1
 \lesssim (\log N_3)L_{\min}^{\frac{1}{2}} 
\LR{N_1^{-\frac{1}{2}} L_{\max}^{\frac{1}{2}}} 
\prod_{i=1}^3 \|g_i \|_{L_\tau^2 L^2_{(d \ell)_\lambda}}.
\end{equation*}
For the second term, since Lemma \ref{liouvilleSymmetrized} provides 
$\displaystyle{\sup_{\alpha \in \R^2} \# \tilde{R}_{N_1 N_3^{-1}, N_1^{-1}N_3}^{\alpha} \lesssim 1}$, 
Lemmas \ref{lemma8.4} and \ref{lemma8.15} establish
\begin{equation*}
\sum_{(m_1, m_2) \in \overline{Z}_{N_3^2/N_1}^{j_1,j_2}}   I_2 
\lesssim  L_{\min}^{\frac{1}{2}} 
\|g_1 \|_{L_\tau^2 L^2_{(d \ell)_\lambda}} \| g_2 \|_{L_\tau^2 L^2_{(d \ell)_\lambda}} \| g_3 \|_{L_\tau^2 L^2_{(d \ell)_\lambda}}.
\end{equation*}

Lastly, we treat the case $\supp g_3 \subset (S_{2^{-10}N_1^2/N_3^2})^c$. 
Let us assume $\supp g_3 \subset \tilde{S}_{\alpha^{-1} N_1^2/N_3^2}$ with $2^{10} \leq \alpha \lesssim N_1^2/N_3^2$.\\ 
This condition gives $|\Phi| \gtrsim \alpha N_3^3$ and $\# \supp_{k} g_3 \lesssim \lambda^2 \alpha N_3^4/N_1^2$. 
Thus, by Lemma \ref{liouvilleSymmetrized}, we obtain
\begin{equation*}
\begin{split}
&\quad \left| \int_{*}  g_1|_{\tilde{\mathfrak{D}}_{j_1}^{A_0}}(\tau_1,\ell_1)  
g_2|_{\tilde{\mathfrak{D}}_{j_2}^{A_0}}(\tau_2,\ell_2)  g_3 (\tau_3,\ell_3) (d \tilde{\sigma}_1)_\lambda (d \tilde{\sigma}_2)_\lambda \right| \\
&\lesssim N_3^{\frac{1}{2}} N_1^{-1} L_{\min}^{\frac{1}{2}} L_{\max}^{\frac{1}{2}}
\prod_{i=1}^3 \|g_i \|_{L_\tau^2 L^2_{(d \ell)_\lambda}}.
\end{split}
\end{equation*}
Consequently, if $\supp g_3 \subset (S_{2^{-10}N_1^2/N_3^2})^c$ by summing up the above, we get
\begin{equation*}
\begin{split}
&\quad \left| \int_{*}  g_1|_{\tilde{\mathfrak{D}}_{j_1}^{A_0}}(\tau_1,\ell_1)  
g_2|_{\tilde{\mathfrak{D}}_{j_2}^{A_0}}(\tau_2,\ell_2)  g_3 (\tau_3,\ell_3) (d \tilde{\sigma}_1)_\lambda (d \tilde{\sigma}_2)_\lambda \right| \\
&\lesssim 
N_3^{\frac{1}{2}} N_1^{-1+\ep} L_{\min}^{\frac{1}{2}} L_{\max}^{\frac{1}{2}} \prod_{i=1}^3 \|g_i \|_{L_\tau^2 L^2_{(d \ell)_\lambda}}.
\end{split}
\end{equation*}
\end{proof}
\begin{proof}[Proof of \eqref{est02-prop8.1} for the case $(\ell_1,\ell_2) \in {\mathfrak{D}}_{0}^{2^{11}} \times {\mathfrak{D}}_{0}^{2^{11}}$.]
We can see
\begin{align*}
&\quad \left| \int_{*}  g_1|_{\tilde{{\mathfrak{D}}}_{0}^{2^{11}}} (\tau_1,\ell_1) 
g_2|_{\tilde{{\mathfrak{D}}}_{0}^{2^{11}}}(\tau_2,\ell_2) 
g_3(\tau_3,\ell_3) (d \tilde{\sigma}_1)_\lambda (d \tilde{\sigma}_2)_\lambda \right| \\
& \lesssim \bigl(\sum_{N_1/N_3 \leq A \leq 2^{30} N_1/N_3} + \sum_{2^{30}N_1/N_3 \leq A \leq N_1} \bigr)
\sum_{\substack{(j_1,j_2) \in J_{A}^{0}\\ 16 \leq |j_1 - j_2|\leq 32}} I_A^{j_1,j_2} + 
\sum_{\substack{(j_1,j_2) \in J_{N_1}^{0}\\|j_1 - j_2|\leq 16}} I_{N_1}^{j_1,j_2}.
\end{align*}
It follows from Proposition \ref{prop8.12} that
\begin{align*}
& \bigl(\sum_{N_1/N_3 \leq A \leq 2^{30} N_1/N_3} + \sum_{2^{30}N_1/N_3 \leq A \leq N_1} \bigr)
\sum_{\substack{(j_1,j_2) \in J_{0}^{j}\\ 16 \leq |j_1 - j_2|\leq 32}} I_A^{j_1,j_2}\\
& \lesssim L_{\min}^{\frac{1}{2}}\bigl( \sum_{A \sim N_1/N_3} 
N_3^{\ep} \LR{ N_1^{-\frac{1}{2}} L_{\max}^{\frac{1}{2}}}
+ \sum_{2^{30}N_1/N_3 \leq A \leq N_1} N_1^{-1} N_3^{\frac{1}{2}} L_{\max}^{\frac{1}{2}}  \bigr)
\prod_{i=1}^3 \|g_i \|_{L_\tau^2 L^2_{(d \ell)_\lambda}} \\
& \lesssim L_{\min}^{\frac{1}{2}} (
N_3^{\ep} \LR{ N_1^{-\frac{1}{2}} L_{\max}^{\frac{1}{2}}}
+ (\log N_1)N_1^{-1} N_3^{\frac{1}{2}} L_{\max}^{\frac{1}{2}}  \bigr)
\prod_{i=1}^3 \|g_i \|_{L_\tau^2 L^2_{(d \ell)_\lambda}}.
\end{align*}

The second term is handled in the same manner as in the proof for the case $(\ell_1,\ell_2) \in {\mathfrak{D}}_{j}^{2^{11}} \times {\mathfrak{D}}_{j}^{2^{11}}$ with fixed $j \not= 0, 2^9 \times 3,2^{10}$.
\end{proof}
It remains to prove Proposition \ref{prop8.1} under the assumption $\max(|k_{1,1}|, |k_{2,1}|) \leq 2^{-5} N_1$. 
\begin{proof}[Proof of Proposition \ref{prop8.1} for the case \textnormal{(II)}]
First we treat non-parallel interactions. 
Let $N_1/N_3 \leq A \leq N_1$ and $16 \leq |j_1-j_2| \leq 32$. We prove
\begin{equation}
\begin{split}
&\quad \left| \int_{*}  \bigl(|k_{3,1}| + |k_{1,1}| \frac{N_3}{N_1} \bigr)f_1|_{\tilde{\mathfrak{D}}_{j_1}^{A}} (\tau_1,k_1) 
f_2|_{\tilde{\mathfrak{D}}_{j_2}^{A}}(\tau_2,k_2) 
f_3(\tau_3,k_3) (d \sigma_1)_\lambda (d \sigma_2)_\lambda \right| \\
& \lesssim 
 A^{-\frac{1}{2}} (N_1 N_3)^{\frac{1}{2}} L_{\min}^{\frac{1}{2}}  \LR{N_1^{-\frac{1}{2}} L_{\max}^{\frac{1}{2}}}
\prod_{i=1}^3 \|f_i \|_{L_\tau^2 L^2_{(dk)_\lambda}}.
\end{split}\label{est01-prop0.8}
\end{equation}
By symmetry, we can always assume $|k_{1,1}| \geq |k_{2,1}|$ and then there exists a dyadic number 
$2^5 \leq  \alpha \leq A$ such that $|k_{1,1}| \sim {\alpha}^{-1}N_1$. 
We divide the proof of \eqref{est01-prop0.8} into the two cases 
$|k_{3,2}| \lesssim \alpha A^{-1}N_1$ and $|k_{3,2}| \gg \alpha A^{-1} N_1$. \\
In the first case, we shall see that the condition $|k_{3,2}| \lesssim \alpha A^{-1}N_1$ gives 
$|k_{3,1}| \lesssim A^{-1} N_1$. 
Let $(r_k \cos \theta_k, r_k \sin \theta_k) \in \mathfrak{D}_{j_k}^{A}$, where $k=1,2$, satisfy 
$(r_1 \cos \theta_1, r_1 \sin \theta_1)+ (r_2 \cos \theta_2, r_2 \sin \theta_2) \in \supp_k f_3$. 
Clearly, $|\cos \theta_1+ \cos \theta_2|\lesssim A^{-1}$ and 
$|\cos \theta_1| \lesssim \alpha^{-1}$. 
Further, since $|k_{3,2}| \lesssim \alpha A^{-1}N_1$, it holds $|r_1-r_2| \lesssim \alpha A^{-1} N_1$. 
Therefore, we get
\begin{align}
 |r_1 \cos \theta_1 + r_2 \cos \theta_2|& 
\leq |(r_1-r_2)\cos \theta_1| + r_2|\cos \theta_1 + \cos \theta_2|\notag \\
& \lesssim A^{-1} N_1.\label{est-k-3,1}
\end{align}
Hence, \eqref{est01-prop0.8} is proved by
\begin{equation}
\begin{split}
&\quad  \left| \int_{*}  f_1|_{\tilde{\mathfrak{D}}_{j_1}^{A}} (\tau_1,k_1) 
f_2|_{\tilde{\mathfrak{D}}_{j_2}^{A}}(\tau_2,k_2) 
f_3(\tau_3,k_3) (d \sigma_1)_\lambda (d \sigma_2)_\lambda \right| \\ 
& \lesssim 
A^{\frac{1}{2}} N_1^{-\frac{1}{2}} N_3^{\frac{1}{2}} L_{\min}^{\frac{1}{2}} 
\LR{N_1^{-\frac{1}{2}} L_{\max}^{\frac{1}{2}}} \prod_{i=1}^3 \|f_i \|_{L_\tau^2 L^2_{(dk)_\lambda}}.
\end{split}\label{est02-prop0.8}
\end{equation}
To see this, we decompose $k_{3,2}$ by employing 
\begin{equation*}
\mathbb{S}_A^m = \{ \eta \in \R \, | \, m A^{-1} N_1 \leq |\eta| \leq (m+1) A^{-1} N_1\},
\end{equation*}
where $m \in \N_{0}$. 
Since $|k_{3,2}| \lesssim N_3$, we have
$\displaystyle{ \{k_{3,2}\} \subset \bigcup_{m \lesssim A N_3/N_1 } 
\mathbb{S}_{A}^m}$. Then, for fixed $m$, it suffices to show
\begin{equation*}
\begin{split}
&  \left| \int_{*} \chi_{\mathbb{S}_A^m}(k_{3,2}) f_1|_{\tilde{\mathfrak{D}}_{j_1}^{A}} (\tau_1,k_1) 
f_2|_{\tilde{\mathfrak{D}}_{j_2}^{A}}(\tau_2,k_2) 
f_3(\tau_3,k_3) (d \sigma_1)_\lambda (d \sigma_2)_\lambda \right| \\ 
& \lesssim 
L_{\min}^{\frac{1}{2}}  \LR{A^{\frac{1}{2}} N_1^{-1} L_{\max}^{\frac{1}{2}} }
\prod_{i=1}^3 \|f_i \|_{L_\tau^2 L^2_{(dk)_\lambda}}.
\end{split}
\end{equation*}
This can be obtained by Proposition \ref{nlw-ZK}. We omit the details.

Next we assume $|k_{3,2}| \gg \alpha A^{-1} N_1$. 
Since $|k_{3,2}| \lesssim N_3$ we can assume $A \gg \alpha N_1/N_3$.
The above observation \eqref{est-k-3,1} implies $|k_{3,1}|\lesssim \alpha^{-1} N_3$ and $|k_{3,2}| \sim N_3$. 
Let 
\begin{equation*}
\widehat{\Phi}=\widehat{\Phi}(\xi_1,\eta_1,\xi_2,\eta_2) = 3 \xi_1 \xi_2 (\xi_1+\xi_2) + \xi_1 \eta_2 (2 \eta_1+\eta_2) + \xi_2 \eta_1 (\eta_1 + 2 \eta_2).
\end{equation*} 
For all $(\xi_k, \eta_k) \in \mathfrak{D}_{j_k}^{A}$ such that $(\xi_1+\xi_2, \eta_1+\eta_2) \in \supp_{k} f_3$, we will show $|\widehat{\Phi}| \gtrsim \alpha^{-1} N_1^2 N_3$ which implies $L_{\max} \gtrsim \alpha^{-1} N_1^2 N_3$. To show this, 
we first observe that
\begin{align*}
&\quad |\xi_1 \eta_2 (2 \eta_1+\eta_2) + \xi_2 \eta_1 (\eta_1 + 2 \eta_2)| \\
& = 
\biggl| \frac{3}{2} (\xi_1 \eta_2 + \xi_2 \eta_1)(\eta_1+\eta_2) +\frac{\xi_1 \eta_2-\xi_2 \eta_1}{2} 
( \eta_1 - \eta_2)\biggr|\\
& \geq |(\xi_1 \eta_2 + \xi_2 \eta_1)(\eta_1+\eta_2) |-|(\eta_1-\eta_2) (\xi_1 \eta_2-\xi_2 \eta_1)|\\
& \gtrsim \alpha^{-1} N_1^2 N_3 .
\end{align*}
Here we used $A^{-1} N_1 \ll |\eta_1+\eta_2| \sim N_3$ and 
$|\xi_1 \eta_2-\xi_2 \eta_1| \lesssim A^{-1} N_1^2$. We calculate
\begin{align*}
|\widehat{\Phi}| 
& = |3 \xi_1 \xi_2 (\xi_1+\xi_2) + \xi_1 \eta_2 (2 \eta_1+\eta_2) + \xi_2 \eta_1 (\eta_1 + 2 \eta_2)|\\
& \geq  |\xi_1 \eta_2 (2 \eta_1+\eta_2) + \xi_2 \eta_1 (\eta_1 + 2 \eta_2)| - 3|\xi_1 \xi_2 (\xi_1+\xi_2) | \\ 
& \gtrsim \alpha^{-1}  N_1^2 N_3.
\end{align*}
Note that $\# \supp_k f_3 \lesssim \lambda^2 A^{-1} N_1 N_3$. Consequently, by Lemma \ref{lemma7.3}, we see that 
$|k_{3,1}|\lesssim \alpha^{-1} N_3$, 
$L_{\max} \gtrsim \alpha^{-1} N_1^2 N_3$ and $\# \supp_k f_3 \lesssim \lambda^2 A^{-1} N_1 N_3$ yield \eqref{est01-prop0.8}.

Next we treat parallel interactions. We show the following equation with $|j_1-j_2| \leq 16$.
\begin{equation}
\begin{split}
&\quad \left| \int_{*}  \bigl(|k_{3,1}| + |k_{1,1}| \frac{N_3}{N_1} \bigr)f_1|_{\tilde{\mathfrak{D}}_{j_1}^{N_1}} (\tau_1,k_1) 
f_2|_{\tilde{\mathfrak{D}}_{j_2}^{N_1}}(\tau_2,k_2) 
f_3(\tau_3,k_3) (d \sigma_1)_\lambda (d \sigma_2)_\lambda \right| \\
& \lesssim N_3^{\frac{1}{2}} L_{\min}^{\frac{1}{2}} 
\LR{N_1^{-\frac{1}{2}} L_{\max}^{\frac{1}{2}}} \prod_{i=1}^3 \|f_i \|_{L_\tau^2 L^2_{(dk)_\lambda}}.
\end{split}\label{est03-prop0.8}
\end{equation}
The proof is almost the same as that for \eqref{est01-prop0.8}. 
If $|k_{1,1}| \lesssim 1$, we easily confirm \eqref{est03-prop0.8} since $|k_{3,1}| \sim 1$ and 
$\# \supp_k f_3 \lesssim \lambda^2 N_3$. 
Let $2^5 \leq \alpha \leq N_1$ and suppose 
$|k_{1,1}| \sim \alpha^{-1} N_1$. As for the non-parallel case, the proof is divided into the cases 
$|k_{3,2}| \lesssim \alpha$ and $|k_{3,2}| \gg \alpha$. 
The first is dealt with the observation \eqref{est-k-3,1}, which provides $|k_{3,1}| \lesssim 1$, and $\# \supp_k f_3 \lesssim \lambda^2 N_3$. The second can be handled by the same argument as for the proof of \eqref{est01-prop0.8} in the case $|k_{3,2}| \gg \alpha A^{-1} N_1$. 

Now we complete the proof of Proposition \ref{prop8.1} by using \eqref{est01-prop0.8} and 
\eqref{est03-prop0.8}. 
The assumption $\max(|k_{1,1}|, |k_{2,1}|) \leq 2^{-5} N_1$ suggests 
$(k_1,k_2) \in {\mathfrak{D}}_{2^{4}}^{2^{5}} \times {\mathfrak{D}}_{2^{4}}^{2^{5}} $. 
Let us recall the Whitney type decomposition of angular variables. 
Define 
\begin{equation*}
J_A = \{ (j_1, j_2) \, | \, 0 \leq j_1, j_2 \leq A-1, \quad ( {\mathfrak{D}}_{j_1}^A \times {\mathfrak{D}}_{j_2}^A ) \subset ( {\mathfrak{D}}_{2^{4}}^{2^{5}} \times {\mathfrak{D}}_{2^{4}}^{2^{5}} ).\}
\end{equation*}
It is observed that
\begin{align*}
 {\mathfrak{D}}_{2^{4}}^{2^{5}} \times {\mathfrak{D}}_{2^{4}}^{2^{5}}  
=   \bigcup_{2^{8} \leq A \leq N_1} \ 
\bigcup_{\tiny{\substack{(j_1,j_2) \in J_{A}\\ 16 \leq |j_1 - j_2|\leq 32}}} 
{\mathfrak{D}}_{j_1}^A \times {\mathfrak{D}}_{j_2}^A \cup 
\bigcup_{\tiny{\substack{(j_1,j_2) \in J_{N_1}\\|j_1 - j_2|\leq 16}}} 
{\mathfrak{D}}_{j_1}^{N_1} \times {\mathfrak{D}}_{j_2}^{N_1}.
\end{align*}
Thus, if we write
\begin{equation*}
\tilde{I}_{A}^{j_1,j_2} := \left| \int_{*}  \bigl(|k_{3,1}| + |k_{1,1}| \frac{N_3}{N_1} \bigr)f_1|_{\tilde{\mathfrak{D}}_{j_1}^{A}} (\tau_1,k_1) 
f_2|_{\tilde{\mathfrak{D}}_{j_2}^{A}}(\tau_2,k_2) 
f_3(\tau_3,k_3) (d \sigma_1)_\lambda (d \sigma_2)_\lambda \right|,
\end{equation*}
we have
\begin{align*}
&\quad \left| \int_{*}  \bigl(|k_{3,1}| + |k_{1,1}| \frac{N_3}{N_1} \bigr)f_1|_{\tilde{\mathfrak{D}}_{2^4}^{2^5}} (\tau_1,k_1) 
f_2|_{\tilde{\mathfrak{D}}_{2^4}^{2^5}}(\tau_2,k_2) 
f_3(\tau_3,k_3) (d \sigma_1)_\lambda (d \sigma_2)_\lambda \right|\\
 & \lesssim 
\sum_{N_1/N_3 \leq A \leq N_1} \ 
\sum_{\tiny{\substack{(j_1,j_2) \in J_{A}\\ 16 \leq |j_1 - j_2|\leq 32}}} I_{A}^{j_1,j_2} 
+ 
\sum_{\tiny{\substack{(j_1,j_2) \in J_{N_1}\\|j_1 - j_2|\leq 16}}} I_{N_1}^{j_1,j_2}.
\end{align*}
The first term is handled by \eqref{est01-prop0.8} and the second term is estimated by \eqref{est03-prop0.8}, respectively.
\end{proof}
\begin{remark}
\textit{(i).} We end this section with an example indicating sharpness of Proposition \ref{prop8.1} up to endpoints.
Firstly, consider the symmetrized equation
\begin{equation*}
\partial_t u + (\partial_x^3 + \partial_y^3) u = u (\partial_x + \partial_y) u, \quad (t,x,y) \in \R \times \T^2.
\end{equation*}
In Subsection \ref{subsection:ExamplesNLW} we have seen that the frequencies $(N,-N), \; (N,2N), \; (2N,N)$ yield a fully transverse interaction, i.e., $A \sim 1$ in \eqref{AssumptionSurfaceTransversality}, with vanishing resonance $\Phi = 0$.
We find with $f_i$ supported on the above modes
\begin{equation*}
\begin{split}
&\quad \left| \int_{\R \times \Z^2} f_1(\tau_1,k_1) f_2(\tau_2,k_2) (k_{3,1} + k_{3,2}) f_3(\tau_3,k_3) (d\sigma_1)_1 (d\sigma_2)_1 \right| \\
 &\sim N L_{\min}^{1/2} \prod_{i=1}^3 \Vert f_i \Vert_{L_\tau^2 L^2_{(dk)_1}}.
 \end{split}
\end{equation*}
\textit{(ii).} For $L_{\mathrm{med}} = L_{\max}=T(N)^{-1}$, which is the minimal modulation for the corresponding time localization due to \eqref{eq:XkEstimateII},
\begin{equation*}
\begin{split}
&\quad \left| \int_{\R \times \Z^2} f_1(\tau_1,k_1) f_2(\tau_2,k_2) (k_{3,1} + k_{3,2}) f_3(\tau_3,k_3) (d\sigma_1)_1 (d\sigma_2)_1 \right| \\
&\lesssim N (L_1 L_2 L_3)^{1/2} T(N) \prod_{i=1}^3 \Vert f_i \Vert_{L_\tau^2 L^2_{(d k)_1}}.
\end{split}
\end{equation*}
This extends to the unsymmetrized equation by rational approximation. Let $\varepsilon > 0$. Consider $p_n, q_n \in \N$ with
\begin{equation*}
\left| \sqrt{3} - \frac{p_n}{q_n} \right| \leq \varepsilon.
\end{equation*}
As frequency modes for the unsymmetrized Zakharov-Kuznetsov equation choose $(0,2p_n)$, $(3q_n,-p_n)$, $(3q_n,p_n)$. Clearly, $\Phi = 0$. We find for the frequencies after symmetrization
\begin{equation*}
\begin{split}
k_1 &= ( \frac{2 p_n}{\sqrt{3}}, \frac{-2p_n}{\sqrt{3}}), \; k_2 = (3q_n - \frac{p_n}{\sqrt{3}}, 3q_n + \frac{p_n}{\sqrt{3}}), \\
k_3 &= (3q_n + \frac{p_n}{\sqrt{3}}, 3q_n - \frac{p_n}{\sqrt{3}}).
\end{split}
\end{equation*}
And we compute with $F$ from Section \ref{section:ShorttimeEnergyEstimates}, quantifying transversality, $F=18 q_n^2 - p_n^2 = O(N^2)$ full transversality of the frequency modes. Taking $p_n, q_n \to \infty$ yields the claim.
\end{remark}
\section{Norm inflation for complex-valued initial data}
\label{section:NormInflation}
In the following we give two examples of complex-valued initital data exhibiting norm inflation. We have already mentioned that this in sharp contrast to the $\R^2$ case as also for complex-valued initial data local well-posedness was proved in \cite{Kinoshita2019} for $s>-1/4$. The below considerations are inspired by \cite{Christ2004}, where related examples were considered for a quadratic Schr\"odinger equation with derivative nonlinearity.

The following initial data will give rise to norm inflation in any Sobolev space. However, it seems to be highly pathological:
\begin{equation}
u_0(t,x,y) = iA + B e^{i Nx}.
\end{equation}
From the form of the interaction it is easy to infer that the excited modes are precisely of the form $(Nk,0)$ for $k \in \mathbb{N}_0$. And moreover, the Fourier coefficient for $(N,0)$ satisfies the differential equation (due to vanishing resonance):
\begin{equation*}
\partial_t \hat{u}(t,N,0) = NA \hat{u}(t,N,0),
\end{equation*}
which yields the exponential growth $\hat{u}(t,N,0) = e^{t NA} iB$.
Setting $A = \varepsilon$ and $B = \varepsilon N^{-s}$, we easily see that $\Vert u_0 \Vert_{H^s} \lesssim \varepsilon$, however already for time-scales $\varepsilon$ the $H^s$-norm is bounded from below by $\gtrsim \varepsilon^{-1}$.

One can avoid the zero Fourier coefficient for another family of frequency modes with vanishing resonance, namely $(0,2), \; (N,-1), \; (N,1)$ (cf. \cite{LinaresPantheeRobertTzvetkov2019}) to find norm inflation for the initial data
\begin{equation*}
u_0(x,y) = A e^{2iy} + B e^{i( Nx -y)}.
\end{equation*}
By the above means, we infer that
\begin{equation*}
\hat{u}(t,N,-1) = (-iN) t A B,
\end{equation*}
and $\Vert u \Vert_{H^s} \gtrsim \varepsilon^2 N |t|$ again giving norm inflation, e.g., for $t = \varepsilon$, $N=\varepsilon^{-4}$.

\section*{Acknowledgements}

The first author acknowledges support by the German Research Foundation (DFG) through the CRC 1283, and the second author is supported through postdoctoral start-up funding by the DFG following his graduation within the IRTG 2235. We would like to thank Professor Sebastian Herr for helpful discussions on Loomis-Whitney-type inequalities and Dr. Tristan Robert for useful remarks on diophantine approximation.

\bibliographystyle{plain}

\end{document}